\newtheorem{theorem}{Theorem}[section]
\newtheorem{lemma}[theorem]{Lemma}
\newtheorem{proposition}[theorem]{Proposition}
\theoremstyle{definition}
\newtheorem{definition}[theorem]{Definition}
\newtheorem{example}[theorem]{Example}
\numberwithin{equation}{section}
\begin{document}
	\setcounter{page}{1}		
	\vspace*{2.0cm}
	\title[The Neumann problem for a class of generalized Kirchhoff-type potential systems ]
	{The Neumann problem for a class of  generalized \\
	Kirchhoff-type potential systems}
\author[N. Chems Eddine and D.D. Repov\v{s}]{Nabil Chems Eddine$^{1}$ 
and 
Du\v{s}an D. Repov\v{s}$^{2,3,4,*}$}
	\maketitle
	\vspace*{-0.6cm}	
	\begin{center}
	{\footnotesize
		$^1$Laboratory of Mathematical Analysis and Applications, Department of Mathematics, Faculty of Sciences, Mohammed V University, P.O. Box 1014, Rabat, Morocco.\\
		$^2$Faculty of Education, University of Ljubljana, Ljubljana, Slovenia.\\
		$^3$Faculty of Mathematics and Physics, University of Ljubljana,  Ljubljana, Slovenia.\\
		$^4$ Institute of Mathematics, Physics and Mechanics, Ljubljana, Slovenia.		
}\end{center}		
	\vskip 4mm {\footnotesize \noindent {\bf Abstract.}		
		In this  paper, we are concerned with the Neumann problem for a class of quasilinear stationary Kirchhoff-type potential systems, which involves general variable exponents elliptic operators with critical growth and real positive parameter. We show that the problem has at least one solution, which converges to zero in the norm of the space as the real positive parameter tends to infinity, via combining the truncation technique, variational method, and the concentration-compactness principle for variable exponent under suitable assumptions on the nonlinearities.\\
		
		\noindent {\it Keywords:.}
		Kirchhoff-type problems, Neumann boundary conditions, $p(x)$-Laplacian operator, generalized capillary operator,   Sobolev spaces with variable exponent, critical Sobolev exponents, concentration-compactness principle, critical point theory, truncation technique.
		
		\noindent {\it Mathematics Subject Classification (2020):}
		35B33, 35D30, 35J50, 35J60, 46E35.}	
	
\renewcommand{\thefootnote}{}
\footnotetext{ $^*$Corresponding author: Du\v{s}an D. Repov\v{s}
	\par
	E-mail addresses: nab.chemseddine@gmail.com (N. Chems Eddine), 
	dusan.repovs@guest.arnes.si (D.D. Repov\v{s})\par }
	
	\section{Introduction}\label{s1}
	In this article, we are concerned with the existence and asymptotic behavior of nontrivial solutions for the following class of nonlocal quasilinear elliptic systems 
\begin{equation}\label{s1.1}
	\begin{gathered}
			M_i\Big(\mathcal{A}_i(u_i)\Big)\Big(
			-\textrm{div}\,\Big( \mathcal{B}_{1_i}(\nabla u_i)\Big) +  \mathcal{B}_{2_i}(u_i)\Big)=|u_i|^{s_i (x)-2}u_i+\lambda F_{u_i}(x,u) \quad \text{ in }\Omega, \\
			M_i\Big(\mathcal{A}_i(u_i)\Big)\mathcal{B}_{1_i}(\nabla u_i).\mathfrak{N}_i=|u_i|^{\ell_i (x)-2}u_i \qquad  \text{ on }\partial\Omega,
		\end{gathered}
	\end{equation}
	for $i=1,2,\dots,n$  ($n\in \mathbb{N}^\ast $), where $\Omega \subset \mathbb{R}^N (N\geq 2)$ is a bounded domain with smooth boundary $\partial \Omega$,  $\mathfrak{N}_i$ is
	the outward normal vector field on $\partial \Omega,$ $\lambda$ is a positive parameter,  $\nabla F=(F_{u_1},\dots,F_{u_2})$ is the gradient of a $C^1$-function $F: \mathbb{R}^N\times\mathbb{R}^n\to \mathbb{R}$,   $p_i, q_i, w_i,s_i \in C(\overline{\Omega}),$ and $\ell_i\in C(\partial \Omega)$ are such that
	\begin{gather}
		1 <p_i^{-}\leq p_i(x) \leq p_i^{+}< q_i^{-}\leq q_i(x) \leq q_i^{+}
		< N,
	\end{gather}
	\begin{gather}
		h_i^{-}\leq h_i(x) \leq h_i^{+}\leq w_i^{-}\leq w_i(x) \leq w_i^{+}\leq s_i^{-}\leq s_i(x) \leq s_i^{+}\leq h_i^{\ast}(x)<\infty,
	\end{gather}
	and
	\begin{gather}
		h_i^{-}\leq h_i(x) \leq h_i^{+}\leq w_i^{-}\leq w_i(x) \leq w_i^{+}\leq \ell_i^{-}\leq \ell_i(x) \leq \ell_i^{+}\leq h_i^{ \partial }(x)<\infty,
	\end{gather}
	for all $x\in \overline{\Omega}$, where functions $w_i$ are given by
	 condition $(\textbf{\textit{F}}_4)$ below, $p_i^-:= \inf_{x\in \overline{\Omega}}p_i(x)$,  $p_i^{+}:= \sup_{x\in \overline{\Omega}}p_i(x)$, and analogously to $w_i^-, w_i^+, q_i^-, q_i^+, h_i^-$,
	$h_i^+, s_i^-, s_i^+, \ell_i^-$ and $\ell_i^+$, with $h_i(x) =(1-\mathcal{K}(k^3_{i})) p_i(x)+ \mathcal{K}(k^3_{i})q_i(x),$ where $k^3_{i}$ is given by condition $(\textbf{\textit{A}}_2)$ below, and
	\[
	h_i^{\ast }(x)=\begin{cases}
		\frac{N h_i(x)}{N-h_i(x)} &\text{for } 	h_i(x)<N ,\\
		+\infty &\text{for }	h_i(x)\geq N,
	\end{cases} \quad
	\text{    and   } \quad
	h_i^{\partial  }(x)=\begin{cases}
		\frac{(N-1) h_i(x)}{N-h_i(x)} &\text{for } 	h_i(x)<N ,\\
		+\infty &\text{for }	h_i(x)\geq N,
	\end{cases}
	\]
	for all $x\in \overline{\Omega}$,  and the function $\mathcal{K}:\mathbb{R}_{0}^{+}\to \left\lbrace 0,1\right\rbrace  $ is defined by	
	\[
	\mathcal{K}(k_i)=\begin{cases}
		1 & \text{ if }     \   	k_i>0,\\
		0 & \text{ if }      \     k_i<0.
	\end{cases}
	\]	
	In addition, we consider both $\mathcal{C}^1_{h_{i}}$ and $\mathcal{C}^2_{h_{i}} $ as nonempty
	disjoint sets, which are respectively defined by	
	\[\mathcal{C}^1_{h_{i}}:= \lbrace x \in \partial\Omega,~  \ell_i (x) =h_i^{\partial}(x) \rbrace	 \
\text{ and } \
	\mathcal{C}^2_{h_{i}}:= \lbrace x \in \overline{\Omega}, ~~s_i (x) =h_i^\ast(x) \rbrace.
	\]
	The operators $\mathcal{A}_{j_i} : X_i\to \mathbb{R}^n$ for $j=1$ or $2$, and the operators  $ \mathcal{B}_i : X_i\to \mathbb{R}$, are respectively defined by  
\begin{equation}\label{AB}
	\mathcal{B}_{j_i}(u_i)= a_{j_i}(| u_i|^{p_i(x)}) | u_i|^{p_i(x)-2} u_i, 
	\
	\text{ and }
	\
	\mathcal{A}_i(u_i)= \displaystyle\int_{\Omega }\dfrac{1}{p_i(x)}\Big(A_{1_i}(|\nabla u_i|^{p_i(x)})+ A_{2_i}(|u_i|^{p_i(x)})\Big)dx,
\end{equation}
for all $1\leq i\leq n$, where  $X_i:= W^{1,h_i(x)}(\Omega)\cap W^{1,p_i(x)}(\Omega)$ is a Banach space, and functions $A_{j_i}$ are defined by $A_{j_i}(t)=\displaystyle\int_{0}^{t}a_{j_i}(k)d k$, where functions $a_{j_i}$ are described in condition $(\textbf{\textit{A}}_1)$.\\	
  In what follows, we shall consider the functions $ a_{j_i}$  satisfying the following assumptions for all $i\in \{1,2,\dots,n\},j\in \{1,2\}:$	
	\begin{itemize}
		\item[$(\textbf{\textit{A}}_1)$]  $a_{j_i}:\mathbb{R}^+ \to \mathbb{R}^+$ are of class $C^1$.		
		\item[$(\textbf{\textit{A}}_2)$] There exist positive constants $k_{j_i}^0, k_{j_i}^1, k_{j_i}^2$ and $k_i^3$  for all $i\in \{1,2,\dots,n\}, j\in \{1,2\}$ such that		
		$$k_{j_i}^0 + \mathcal{K}(k_i^3)k_{j_i}^2 \xi^{\frac{q_i(x)-p_i(x)}{p_i(x)}} \leq a_{j_i}(\xi) \leq k_{j_i}^1 + k_i^3\xi^{\frac{q_i(x)-p_i(x)}{p_i(x)}},
			\		
		\hbox{for all}
		\
		\xi \geq 0
			\
		\hbox{ and  a.e.} 
		\
		x\in \overline{\Omega}.$$		
		\item[$(\textbf{\textit{A}}_3)$] There exists $c_i>0$  for all $1\leq i\leq n$ such that		
		$$ \min \left\lbrace a_{j_i}(\xi^{p_i(x)}) \xi^{p_i(x)-2}, a_{j_i}(\xi^{p_i(x)})\xi^{p_i(x)-2}
		+\xi \frac{\partial(a_{j_i}(\xi^{p_i(x)})\xi^{p_i(x)-2})}{\partial \xi } \right\rbrace \geq c_i\xi^{p_i(x)-2},
		\
	\hbox{for a.e.} 
	\
	x\in \overline{\Omega}
	\		
	\hbox{and  all}
	\
	\xi \geq 0.$$			
		\item[$(\textbf{\textit{A}}_4)$]
		There exist positive constants $\beta_{j_i}$ and $\gamma_i$ for all $i\in \left\lbrace 1,2,\dots,n\right\rbrace,
		j\in \{1,2\}$ such that		
		$$ A_{j_i}(\xi)\geq \frac{1}{\beta_{j_i}}a_{j_i}(\xi)\xi \text{ with } h_i^+<\gamma_i <s_i^-
		\text{ and } \frac{q_i^+}{p_i^+}\leq \beta_{j_i} < \frac{\gamma_i}{p_i^+},\		
		\hbox{for all}
		\
		\xi \geq 0.$$
		\item[$(\textbf{\textit{M}})$] $M_i: \mathbb{R}^+ \longrightarrow \mathbb{R}$ are continuous and increasing functions such that $M_i(t)\geq M_i(0)=\mathfrak{M}_i^0>0 $, for all $t \geq 0$, $i\in \left\lbrace 1,2,\cdots,n\right\rbrace $. 
	\end{itemize}
As it is well known, there are many examples of functions $ M_i $  that satisfy  assumption  $ (\textbf{\textit{M}})$, for example
$$
M_1(\xi)=\mathfrak{M}_1^0 + \mathfrak{B}_1\xi^{\theta_1}, 
\
\text{ with } 
\
 \mathfrak{M}_1^0, \mathfrak{B}_1\geq 0, \mathfrak{M}_1^0+\mathfrak{B}_1>0 
 \
 \text{    and   }
 \
\theta_1 \geq 1.
$$
In particular, when $\mathfrak{M}_1^0=0$ and $\mathfrak{B}_1>0$, the Kirchhoff equation associated with $M_1$ is said to be degenerate. On the other hand, when
 $\mathfrak{M}_1^0>0$ and $\mathfrak{B}_1\geq0$, the Kirchhoff equation associated with $M_1$ is said to be  nondegenerate. In this case, when $\mathfrak{B}_1=0$, the Kirchhoff equation associated with $M_1$ (is a constant) reduces to a local quasilinear elliptic problem.\\  
 The study of differential equations and variational problems driven by nonhomogeneous differential operators has received extensive attention and has been extensively investigated,
 see  e.g., Papageorgiou et al. \cite{Papageorgiou}. This is due to their ability to model many physical phenomena. It should be noted that the $p(x)$-Laplacian operator is a special case of the divergence form operator $\textrm{div}\,\Big( \mathcal{B}_{j_i}(\nabla u_i)\Big)$. The natural functional framework for this operator is described by the Sobolev space with a variable exponent $W^{1,p(x)}$. \\ 
 In recent decades, there has been a particular focus on variable exponent Lebesgue and Sobolev spaces, $L^{p(x)}$ and $W^{1,p(x)}$, where $p$ is a real function, e.g.,  R\v{a}dulescu and Repov\v{s} \cite{radulescu-Repovs}. Traditional Lebesgue spaces $L^p$ and Sobolev spaces $W^{1,p}$ with constant exponents have proven insufficient to tackle the complexities of nonlinear problems in the applied sciences and engineering. To address these limitations, the use of variable exponent Lebesgue and Sobolev spaces has been on the rise in recent years. \\ 
 This area of research reflects a new type of physical phenomena, such as electrorheological fluids, or "smart fluids," which can exhibit dramatic changes in mechanical properties in response to an electromagnetic field. These and other nonhomogeneous materials require the use of variable exponent Lebesgue and Sobolev spaces, where the exponent $p$ is allowed to vary. Moreover, variable exponent Lebesgue and Sobolev spaces have found a wide range of applications, from image restoration and processing to fields such as thermorheological fluids, mathematical biology, flow in porous media, polycrystal plasticity, heterogeneous sand pile growth, and fluid dynamics. For a comprehensive overview of these and other applications, see e.g. Chen et al.  \cite{Chen1}, Diening et al.  \cite{Dien0,Dien1},  Halsey  \cite{Halsey}, R\v{a}dulescu \cite{radulescu}, Ru\v{z}i\u{c}ka  \cite{Ru1,Ru2}, and the references therein).\\	
	Furthermore, every single equation of  system
	\eqref{s1.1} is a generalization of the stationary problem of the first model introduced by Kirchhoff  \cite{Kir} in 1883 of the following form:	
	\begin{equation}
		\label{e1.1}
		\rho\partial_{tt}^2u-\left(\dfrac{\rho_0}{h}+\dfrac{E}{2L}\int_{0}^{L}\left\vert \partial_{x}u(x)\right\vert^2dx \right)\partial_{xx}^2u=0,
	\end{equation}	
 where the parameters $\rho$, $h$ , $\rho_0$, $t$, $L$, $E$ are constants with some physical meaning,  which is an extension of the classical D'Alembert wave equation, by considering the effect of the change in the length of a vibrating string. \\
	Nearly a century later, in 1978, Jacques-Louis Lions  \cite{Lions} returned to the equation and proposed a general Kirchhoff equation in arbitrary dimension with external force term which was written as	
			\begin{equation*}\label{s1.3}
				\begin{gathered}
			\partial_{tt}^2u-\left(
			a+b \displaystyle\int_{\Omega}\left| \nabla u\right|^2dx \right)\Delta u=f(x,u) \quad \text{ in }\Omega, \\
			u=0 \quad\text{on }\partial\Omega.
		\end{gathered}
	\end{equation*}	
Later on, many interesting results have been obtained, see e.g., Caristi et al.  \cite{Caristi}, Dai and Hao  \cite{Dai1}, Ma  \cite{Ma} and the references therein. The main difficulty in studying these equations appears that they do not satisfy a pointwise identity any longer. It is generated by having the term containing $M_i$ in the equations, and it makes \eqref{s1.1} a nonlocal problem.  The nonlocal problem models arise in the description of biological systems and also various physical phenomena, where $u$ describes a process that depends on the average of itself, such as population density. For more references on this subject, we refer the interested reader to Ambrosio et al. \cite{Ambrosio}, Arosio and Pannizi  \cite{Arosio},  Cavalcanti et al. \cite{Cavalcanti}, Chipot and Lovat  \cite{Chipo}, He et al. \cite{He-Qin-Wu},  Corr\^{e}a and Nascimento  \cite{Corr2}, Yang and Zhou \cite{Yang}, Wang et al. \cite{Wang} and the references therein.	
On the one hand, the differential equations with constant or variable critical exponents in bounded or unbounded domains have attracted increasing attention recently. They were first discussed in the seminal paper by Brezis and Nirenberg  \cite{Brezis} in 1983, which treated Laplacian equations. Since then, there have been extensions of  \cite{Brezis} in many directions. \\
One of the main features of elliptic equations involving critical growth is the lack of compactness arising in connection with the variational approach. In order to overcome the lack of compactness, Lions  \cite{Lions1} established the method using the so-called concentration compactness principle (CCP, for short) to show a minimizing sequence or a Palais-Smale ((PS), for short) sequence is precompact. Afterward, The variable exponent version of the Lions concentration-compactness principle for a bounded domain was independently obtained by Bonder et al.  \cite{Bonder,Bonder2}, Fu  \cite{Fu}, and  for an unbounded domain by Fu  \cite{Fu1}. Since then, many authors have applied these results to study critical elliptic problems involving variable exponents, see e.g., Alves et al.  \cite{Alves0,Alves}, Chems Eddine et al.  \cite{chemsRagusa,chems1,chems}, Fang and Zhang  \cite{Fang1}, Hurtado et al.  \cite{Hurtado},  Mingqi et al. \cite{Mingqi}, Zhang and  Fu  \cite{Zhang}.\\	
	When $M_i$ satisfies conditions
	$a_{1_i}\equiv1$ (with $k^1_{i}=1$,   and $k^2_{i}>0$ and $k^3_{i}=0$), and $a_{2_i}\equiv0$, Chems Eddine  \cite{chems1} proved the existence of nontrivial  weak solutions for  the following class of Kirchhoff-type potential systems with Dirichlet boundary conditions	
	\begin{equation*}
		\begin{gathered}
			-M_i\left(\displaystyle\int_{\Omega  }\dfrac{1}{p_i(x)}|\nabla u_i|^{p_i(x)}\right)
			\Delta_{p_i(x)}u_i=|u_i|^{q_i (x)-2}u_i +\lambda F_{u_i}(x,u)  \quad \text{ in }\Omega, \\
			u_i=0 \quad\text{on }\partial\Omega.
	\end{gathered}
\end{equation*}
$1\leq i\leq n (n\in \mathbb{N}^\ast)$, where $p_i,q_i : \overline{\Omega}\to \mathbb{R} $ are Lipschitz continuous functions such that $1\leq  q_i (x)\leq p_i^{\ast}(x)$ for all $x$ in $\Omega$ and the potential function $F$ satisfies mixed and subcritical growth conditions.		
	Following that, Chems Eddine  \cite{chems0}
	established the existence of an infinite of solutions for the following systems	
\begin{equation*}
	\begin{gathered}
			-M_i\Big(\mathcal{A}_i(u_i)\Big)
			\textrm{div}\,\Big( \mathcal{B}_i(\nabla u_i)\Big)=|u_i|^{s_i (x)-2}u_i +\lambda F_{u_i}(x,u)  \text{ in }\Omega, \\
			u=0 \quad \text{on }\partial\Omega,
			\end{gathered}
	\end{equation*}
	for $i=1,2,\dots,n$  ($n\in \mathbb{N} $), where $\lbrace x \in \Omega, ~~s_i (x) =p_i^\ast(x) \rbrace $ nonempty and  $F\in C^1(\Omega\times\mathbb{R}^n,\mathbb{R})$  satisfies some mixed and subcritical growth conditions. Next, Chems Eddine and Ragusa  \cite{chemsRagusa} dealt with cases when the above class of Kirchhoff-type potential systems under Neumann boundary conditions with two critical exponents, established the existence and multiplicity of solutions.	\\	
  Our objective in this article is to show the existence of nontrivial solutions for the nonlocal problem \eqref{s1.1}. As we shall see in this paper, there are some  main difficulties in our situation, which
   can be summed up in three main problems. First, the assumption $(\textbf{\textit{M}})$ provides only a positive lower bound
    for the Kirchhoff functions $M_i$ near zero, creating
    serious mathematical technical difficulties.
    To overcome these difficulties, we need to do a truncation of
     the Kirchhoff functions $M_i$ to obtain prior estimates of the boundedness from above, and thus obtain a new auxiliary problem, thereafter, we do another truncation to control the energy functional corresponding to
      the auxiliary problem. The second difficulty in solving problem \eqref{s1.1} is the lack of compactness which can be illustrated by the fact that the embeddings
       $W^{1,h(x)}(\Omega) \hookrightarrow L^{h^{*}(x)}(\Omega)$ and  $W^{1,h(x)}(\Omega) \hookrightarrow L^{h^{\partial}(x)}(\partial \Omega)$ are no longer compact
    and, to overcome this difficulty, we use two versions of Lions's principle for the variable exponent extended by Bonder et al. \cite{Bonder,Bonder2}. Then  by combining the variational method and Mountain  Pass Theorem, we obtain the existence of at least one nontrivial solution to the auxiliary problem, see Theorem \ref{AuxTh}, and by truncating functions $M_i$, we obtain the existence of at least one nontrivial solution to  problem \eqref{s1.1}, see Theorem \ref{thm.result}.\\	
	Throughout this paper, we shall assume that $F$ satisfies the following conditions:	
	\begin{itemize}
		\item[($\textbf{\textit{F}}_1$)] $F\in C^{1}(\Omega \times \mathbb{R}^n,\mathbb{R})$ and $F\big(x,0_{\mathbb{R}^n}\big)=0$	.	
		\item[($\textbf{\textit{F}}_2$)]   
		For all $(i,j)\in \left\lbrace 1,2,\dots,n\right\rbrace^2$,
		there exist positive functions $b_{i_j}$ such that
		\begin{gather*}
			\big|  F_{\xi_i}(x,\xi_1,\dots,\xi_n)\big| \leq
			\sum_{j=1}^{n} b_{i_j}(x)| \xi_j|^{r_{i_j}(x)-1},
		\end{gather*}
		where
		$1<r_{i_j}(x)<\inf_{x\in \Omega}h_i(x)$ for all $x\in \Omega $. The weight-functions $b_{ii}$ (resp  $b_{i_j}$ if $i\neq j$) belong to the generalized Lebesgue spaces $L^{\alpha_{ii}}(\Omega)$ (resp $L^{\alpha_{i_j} }(\Omega)$), with
		\[
		\alpha_{ii}(x)=\frac{h_i(x)}{h_i(x)-1}, \quad \alpha_{i_j} (x)
		=\frac{h_i^{\ast }(x)h_j^{\ast }(x)}{h_i^{\ast }(x)h_j^{\ast }(x)-h_i^{\ast
			}(x)-h_j^{\ast }(x)}.
		\]						
		\item[($\textbf{\textit{F}}_3$)] There exist $K>0$ and $ \gamma_i \in (h_i^+, \inf\{s_i^-, \ell_i^-\})$ for all $(x,\xi_1,\dots,\xi_n)\in \Omega\times\mathbb{R}^n$ where $| u_i|^{\gamma _i}\geq K $		
		\[
		0 < F(x,\xi_1,\dots,\xi_n)< \sum_{i=1}^{n}  \frac{\xi_i}{\gamma_i} F_{\xi_i}(x,\xi_1,\dots,\xi_n).
		\] 					
		\item[($\textbf{\textit{F}}_4$)]	There exists a positive constant $c$ such that
		\[
		|F(x,\xi_1,\dots,\xi_n)| \leq c\Big (\sum_{i=1}^{n} | \xi_i|^{w_i(x)}\Big),  \ \hbox{for all} \ (x,\xi_1,\dots,\xi_n)\in  \Omega\times\mathbb{R}^n,
		\]
		where 	$w_i \in C_{+}(\overline{\Omega})$ and  $q_i^+< w_i^-\leq w_i^+<  \inf\{s_i^-, \ell_i^-\} \leq \inf\{s_i^+, \ell_i^+\},  \ \hbox{for all} \ 1\leq  i\leq n $.		
	\end{itemize}
	\begin{example}
		 There are many potential functions $F$ satisfying assumptions $(\textbf{\textit{F}}_1)$ and $(\textbf{\textit{F}}_2)$.  For example, when $n=2,$ take  
		$
		F(x,u_1,u_2)=b_{12}(x)|u_1|^{r_1(x)}|u_2|^{r_2(x)},
		$
		where $\frac{r_1(x)}{h_1(x)}+\frac{r_2(x)}{h_2(x)}< 1$, and the positive weight-function $b_{12} \in L^{\alpha(x)}(\Omega) $ with		
		 \[
		 \alpha(x)= \dfrac{h_2^{\ast}(x)h_1^{\ast}(x)}{h_2^{\ast}(x)h_1^{\ast}(x) -h_2^{\ast}(x)r_1(x) -h_2^{\ast}(x) r_2(x),}
		 	\		
		 \hbox{ for all }
		 \
		x\in \Omega .	 
		 \] 
		 By the standard calculus we can verify that $F$ satisfies the assumption $(\textbf{\textit{F}}_1)$. Moreover, by using  Young inequality we can check the assumption $(\textbf{\textit{F}}_2)$.\end{example}
	The main result  of our paper is the following.	
	\begin{theorem} \label{thm.result}
	Assume that conditions $(\textbf{\textit{A}}_1)-(\textbf{\textit{A}}_4)$, $(\textbf{\textit{M}}),$ and $(\textbf{\textit{F}}_1)-(\textbf{\textit{F}}_4)$ hold. Then there exists $\lambda^{\star} >0$ such that for all $\lambda \geq \lambda_{\star}$, problem \eqref{s1.1} has at least one nontrivial solution in $X$. Moreover, if $u_\lambda$ is a weak solution of problem \eqref{s1.1}, then 
		$\lim_{ \lambda \to +\infty } \| u_\lambda\|=0.$
	\end{theorem}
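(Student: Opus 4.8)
The plan is to first solve an \emph{auxiliary problem} obtained by truncating the Kirchhoff functions from above, produce a weak solution of it via the Mountain Pass Theorem, and then show that this solution has arbitrarily small norm once $\lambda$ is large, so that the truncation is inactive and we recover a genuine solution of \eqref{s1.1}. Concretely, for a threshold $t_0>0$ (to be fixed at the end) I would replace each $M_i$ by
\[
M_i^{t_0}(t)=\begin{cases} M_i(t), & 0\le t\le t_0,\\ M_i(t_0), & t>t_0,\end{cases}
\]
so that $\mathfrak{M}_i^0\le M_i^{t_0}(t)\le M_i(t_0)$ for all $t\ge0$; writing $\widehat{M}_i^{t_0}(t)=\int_0^t M_i^{t_0}(\tau)\,d\tau$, the weak solutions of the truncated system are the critical points on $X=\prod_{i=1}^n X_i$ of
\[
J_\lambda(u)=\sum_{i=1}^n \widehat{M}_i^{t_0}\!\big(\mathcal{A}_i(u_i)\big)-\sum_{i=1}^n\int_\Omega\frac{|u_i|^{s_i(x)}}{s_i(x)}\,dx-\sum_{i=1}^n\int_{\partial\Omega}\frac{|u_i|^{\ell_i(x)}}{\ell_i(x)}\,d\sigma-\lambda\int_\Omega F(x,u)\,dx .
\]
Since $\widehat{M}_i^{t_0}$ grows at most linearly, a further truncation of the auxiliary energy functional (a radial cut-off of $J_\lambda$ away from a small ball) will be used to control it; I describe the argument for $J_\lambda$ itself, the modification being routine.

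Next I would establish the mountain-pass geometry of $J_\lambda$. Near the origin, $(\textbf{\textit{A}}_2)$--$(\textbf{\textit{A}}_3)$ and $(\textbf{\textit{M}})$ bound $\sum_i\widehat{M}_i^{t_0}(\mathcal{A}_i(u_i))$ below by a sum of positive powers of $\|u_i\|_{X_i}$ with exponents among $p_i^\pm,h_i^\pm$, whereas by $(\textbf{\textit{F}}_4)$ the potential term is $O\big(\sum_i\|u_i\|^{w_i^\mp}\big)$ with $w_i^->q_i^+\ge h_i^+\ge p_i^+$, and the two critical terms carry still larger exponents; hence $J_\lambda(0)=0$ and there are $\rho,\alpha_\lambda>0$ with $J_\lambda(u)\ge\alpha_\lambda$ on $\|u\|=\rho$. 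For the second condition, fix $v\in X\setminus\{0\}$: integrating the upper bound in $(\textbf{\textit{A}}_2)$ and using that $\widehat{M}_i^{t_0}$ is at most linear gives $\widehat{M}_i^{t_0}(\mathcal{A}_i(tv_i))\le C(1+t^{q_i^+})$, while $s_i^-,\ell_i^->w_i^+>q_i^+$ forces the critical and trace terms of $J_\lambda(tv)$ to dominate, so $J_\lambda(tv)\to-\infty$ and some $e$ with $\|e\|>\rho$ satisfies $J_\lambda(e)<0$.

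The core of the proof is a local Palais--Smale condition: I would show that $J_\lambda$ satisfies $(PS)_c$ for every $c$ below an explicit threshold $c^\ast=c^\ast(t_0)$ built from the best constants in the variable-exponent embeddings $W^{1,h_i(x)}(\Omega)\hookrightarrow L^{h_i^\ast(x)}(\Omega)$ and $W^{1,h_i(x)}(\Omega)\hookrightarrow L^{h_i^\partial(x)}(\partial\Omega)$. First, a $(PS)_c$ sequence $(u_k)$ is bounded: testing $J_\lambda'(u_k)$ against $u_k$ and combining with $J_\lambda(u_k)\to c$, the superhomogeneity hypothesis $(\textbf{\textit{F}}_3)$ and the inequality $A_{j_i}(\xi)\ge\beta_{j_i}^{-1}a_{j_i}(\xi)\xi$ with $\beta_{j_i}<\gamma_i/p_i^+<s_i^-/p_i^+$ from $(\textbf{\textit{A}}_4)$ (together with $M_i\ge\mathfrak{M}_i^0>0$) let one absorb the critical, boundary, and potential terms into the leading one. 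Passing to a subsequence with $u_{i,k}\rightharpoonup u_i$ in $X_i$, I would then invoke the two versions of the concentration--compactness principle for variable exponents of Bonder et al.\ \cite{Bonder,Bonder2}: one for the interior critical concentration on $\mathcal{C}^2_{h_{i}}$, one for the boundary critical concentration on $\mathcal{C}^1_{h_{i}}$. These yield at most countably many concentration points with quantized masses; a local energy estimate at each such point, using $c<c^\ast$, shows every mass is zero, so the critical and trace terms converge strongly, and then the $(S_+)$/strict-monotonicity structure encoded in $(\textbf{\textit{A}}_3)$ promotes $u_k\rightharpoonup u$ to $u_k\to u$ in $X$. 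This compactness analysis --- with two simultaneous critical exponents and the non-homogeneous operators $\mathcal{A}_i$ --- is the step I expect to be the main obstacle.

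With the mountain-pass geometry and $(PS)_{c_\lambda}$ in hand, the Mountain Pass Theorem provides a nontrivial critical point $u_\lambda$ of $J_\lambda$, that is, a weak solution of the auxiliary problem; this is Theorem~\ref{AuxTh}. To conclude, I would estimate $c_\lambda$ by evaluating $J_\lambda$ along a fixed admissible path and using the lower growth of $F$ implied by $(\textbf{\textit{F}}_3)$ (note $\gamma_i>q_i^+$ by $(\textbf{\textit{A}}_4)$): optimizing in $t$ gives $c_\lambda\le C\lambda^{-\kappa}$ for some $\kappa>0$, hence $c_\lambda\to0$, and in particular $c_\lambda<c^\ast$ for $\lambda$ large, which is what legitimizes the previous paragraph for such $\lambda$. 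Re-running the boundedness identity at $u=u_\lambda$ (using $J_\lambda(u_\lambda)=c_\lambda$ and $\langle J_\lambda'(u_\lambda),u_\lambda\rangle=0$) then yields $\|u_\lambda\|\le C' c_\lambda^{1/\theta}\to0$ for some $\theta>0$, which is the asserted asymptotics. Finally, since $\|u_\lambda\|\to0$ forces $\mathcal{A}_i(u_{i,\lambda})\le t_0$ for every $i$ as soon as $\lambda\ge\lambda^\star$, on that range $M_i^{t_0}(\mathcal{A}_i(u_{i,\lambda}))=M_i(\mathcal{A}_i(u_{i,\lambda}))$, so $u_\lambda$ is in fact a nontrivial weak solution of the original system \eqref{s1.1}, completing the proof.
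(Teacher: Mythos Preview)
Your plan coincides with the paper's: truncate each $M_i$ to obtain a bounded Kirchhoff coefficient, verify the mountain-pass geometry of the truncated functional, prove boundedness of $(PS)$ sequences via the combination of $(\textbf{\textit{A}}_4)$ and $(\textbf{\textit{F}}_3)$, run the two variable-exponent concentration--compactness principles of Bonder et al.\ to recover $(PS)_c$ below an explicit threshold, show that $c_\lambda$ drops below that threshold for large $\lambda$, and finally read off $\|u_\lambda\|\to 0$ from the energy identity to deactivate the truncation. Two small discrepancies are worth noting. First, the paper performs only the single truncation of $M_i$; the additional ``radial cut-off of $J_\lambda$'' you mention is not used and not needed. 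Second, and more substantively, the paper's Lemma~\ref{lemma34} does \emph{not} obtain a quantitative rate $c_\lambda\le C\lambda^{-\kappa}$: it argues indirectly, locating the maximiser $\delta_\lambda$ along the fixed ray $\delta\mapsto\delta z$, showing $\{\delta_\lambda\}$ is bounded via the critical and trace terms, and then proving $\delta_\lambda\to 0$ by contradiction (the $\lambda F'$ contribution in $\langle E'(\delta_\lambda z),\delta_\lambda z\rangle=0$ would otherwise blow up). Your optimisation-in-$t$ argument would require a lower bound $F(x,tv)\gtrsim t^{\gamma}$ for \emph{small} $t$, whereas $(\textbf{\textit{F}}_3)$ is only assumed on $\{|u_i|^{\gamma_i}\ge K\}$; so in the small-$t$ regime you should fall back on the paper's qualitative argument rather than expect a power rate.
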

	The  paper is organized as follows: In Section \ref{sec2} we give some preliminary results of the variable exponent spaces. In Section \ref{sec auxiliary} we introduce the auxiliary problem and obtain a nontrivial solution for the auxiliary problem. Section \ref{proof res} is dedicated to proving the main results. Finally, in Section \ref{Examples}, we illustrate the degree of generality of the kind of problems we studied in this paper. 	
	\section{Preliminaries and basic notations}\label{sec2}	
	In this section, we introduce some definitions and results which will be used in the next section.	
Throughout  our work, let $\Omega$ be a bounded domain of $\mathbb{R}^N$ $(N\geq 2)$ with a Lipschitz boundary $\partial \Omega$,
and let us denote by $\mathcal{D}$ either $\Omega$
 or its boundary $\partial \Omega$.
Denote
\begin{equation*}
	\mathcal{M}_{+}(\mathcal{D}):= \Big\{  p: \mathcal{D} \to \mathbb{R}  \ \text{measurable real-valued function}: p(x)> 1 \text{ for a.a. } x \in \mathcal{D} \Big\},  	
\end{equation*}
\begin{equation*}
	C_{+}(\mathcal{D}):= \Big\{  p \in C(\mathcal{D}): p(x)> 1 \quad \text{ for a.a. } x \in \mathcal{D} \Big\}.
\end{equation*}
For all $ p \in \mathcal{M}_{+}(\mathcal{D}),$  denote
$p^{+}:= \sup_{x\in \mathcal{D}}p(x)$
	 and
	 $ p^{-}:= \inf_{x\in \mathcal{D}}p(x)$
and
for all $p \in \mathcal{M}_{+}(\mathcal{D})$ and for a measure $\eta$ on $\mathcal{D}$, we define the variable exponent Lebesgue space as
\begin{gather*}
	L^{p(x)}(\mathcal{D}):=L^{p(x)}(\mathcal{D},d\eta):= \Big\{ u \ \text{measurable real-valued function}:  \rho_{p,\mathcal{D}}(u)  <\infty \Big\},
\end{gather*}
where the functional $\rho_{p,\mathcal{D}}:L^{p(x)}(\mathcal{D}) \to \mathbb{R}$ is defined as
$$ \rho_{p,\mathcal{D}}(u):= \int_{\Omega}|u(x)|^{p(x)}d\eta.$$
The functional $\rho_{p,\mathcal{D}}$ is called the $p(x)$-modular of the $L_{ }^{p(x)}(\mathcal{D})$ space, it
has played  an important role in manipulating the generalized Lebesgue-Sobolev spaces. We endow the space $L_{}^{p(x)}(\mathcal{D})$ with the Luxemburg norm
\begin{equation*}
	\|u\|_{L^{p(x)}(\mathcal{D})}: = \inf \left\{ \tau >0 ; \rho_{p,\mathcal{D}}\bigg(\frac{u(x)}{\tau}\bigg) \leq 1 \right \}.
\end{equation*}
Then $(L^{p(x)}(\mathcal{D}),\|u\|_{L^{p(x)}(\mathcal{D})})$ is a separable and reflexive Banach space (see, e.g., Kov\'{a}\v{c}ik and R\'{a}kosn\'{i}k  \cite[Theorem 2.5, Corollary 2.7]{Kov1}). In the subsequent sections, the $L^{p(x)}$-spaces under consideration will be
$L^{p(x)}(\Omega):=L^{p(x)}(\Omega, dx)$, and 	$L^{p(x)}(\partial \Omega):=L^{p(x)}(\partial\Omega, d\sigma)$ for an appropriate measure $\sigma$
supported
on $\partial \Omega$. Let us now recall more basic properties concerning the Lebesgue spaces.
\begin{proposition}[Kov\'{a}\v{c}ik and R\'{a}kosn\'{i}k {\cite[Theorem 2.8]{Kov1}}] \label{prop21}
	Let  $p$ and $q$ be variable exponents in $\mathcal{M}_{+}(\mathcal{D})$ such that $p\leq q$ in $\mathcal{D}, $ where $0<\text{meas}(\mathcal{D})<\infty$. Then the embedding $L^{q(x)}(\mathcal{D})\hookrightarrow L^{p(x)}(\mathcal{D})$ is continuous.
\end{proposition}
Furthermore, the following H\"older-type inequality
\begin{equation}\label{Hol}
	\Big| \int_{\mathcal{D}}u(x)v(x)dx\Big|
	\leq \Big(\frac{1}{p^{-}}+ \frac{1}{(p')^{-}}\Big)\| u\|_{L^{p(x)}(\mathcal{D})}\| v\| _{L^{p'(x)}(\mathcal{D})}
	\leq 2\| u\|_{L^{p(x)}(\mathcal{D})}\| v\| _{L^{p'(x)}(\mathcal{D})}
\end{equation}
holds  for all  $u\in
L^{p(x)}({\mathcal{D}})$ and $v\in L^{p'(x)}({\mathcal{D}})$
(see, e.g., Kov\'{a}\v{c}ik and R\'{a}kosn\'{i}k \cite[Theorem 2.1]{Kov1}), where we denoted by
$L^{p'(x)}(\mathcal{D})$ the topological dual space (or the conjugate space) of
$L^{p(x)}(\mathcal{D})$, obtained by conjugating the exponent pointwise, that is,  $1/p(x)+1/p'(x)=1$ (see, e.g., Kov\'{a}\v{c}ik and R\'{a}kosn\'{i}k \cite[Corollary 2.7]{Kov1}). Moreover, if $h_1,h_2,h_3:\mathcal{D}\to (1,\infty)$ are Lipschitz continuous functions such that
\[
\frac{1}{h_1(x)}+\frac{1}{h_2(x)}+\frac{1}{h_3(x)}=1,
\]
then for all $u \in L^{h_1(x)}(\mathcal{D})$,$ v\in L^{h_2(x)}(\mathcal{D})$,
$ w\in L^{h_3(x)}(\mathcal{D}),$ the following inequality holds
\[
\int_{\mathcal{D}}| u(x)v(x)w(x)|dx
\leq \Big(\frac{1}{h_1^{-}}+ \frac{1}{h_2^{-}}+\frac{1}{h_3^{-}}\Big)\| u\|_{ L^{h_1(x)}(\mathcal{D})}\| v\| _{ L^{h_2(x)}(\mathcal{D})}\| w\| _{ L^{h_3(x)}(\mathcal{D})}.
\]
If $u\in L^{p(x)}({\mathcal{D}})$ and $p<\infty$, we have the following properties (see e.g., Fan and Zhao \cite[Theorem 1.3, Theorem 1.4]{Fan}):
\begin{equation}\label{L22}
	\|u\|_{L^{p(x)}({\mathcal{D}})}<1\;(=1;\,>1)\; \quad  \text{ if and only if } \quad \; \rho_{p,\mathcal{D}} (u) <1\;(=1;\,>1),
\end{equation}
\begin{equation}\label{L23}
	\text{ if }\quad 	\|u\|_{L^{p(x)}(\mathcal{D})}>1\; \quad \text{ then } \quad \;
	\|u\|_{L^{p(x)}(\mathcal{D})}^{p^-}\leq \rho_{p,\mathcal{D}}(u)
	\leq\|u\|_{L^{p(x)}(\mathcal{D})}^{p^+},
\end{equation}
\begin{equation}\label{L24}
	\text{ if }\quad 	\|u\|_{L^{p(x)}(\mathcal{D})}<1\; \quad \text{ then } \quad  \;
	\|u\|_{L^{p(x)}(\mathcal{D})}^{p^+}\leq \rho_{p,\mathcal{D}}(u)
	\leq\|u\|_{L^{p(x)}(\mathcal{D})}^{p^-}.
\end{equation}
As a consequence, we have the equivalence of modular  and norm convergence
\begin{equation}\label{L25}
	\|u\|_{L^{p(x)}(\mathcal{D})}\to 0\;(\to\infty)\; \quad \text{ if and only if } \quad \; \rho_{p,\mathcal{D}} (u) \to 0\; (\to\infty).
\end{equation}
	\begin{proposition}[Edmunds and Rakosnik \cite{Edmu1}] \label{prop22}
	Let  $h$ and $\ell$ be variable exponents in $\mathcal{M}_{+}(\mathcal{D})$ with $1\leq h(x), \ell(x)\leq \infty $ a.e. $x$ in
	$\Omega$ and  $p\in L^{\infty }(\Omega)$.  Then  if $u\in L^{\ell(x)}(\Omega)$, $u\neq 0$, it follows that
	\begin{gather*}
		\big\| u \big\| _{L^{h(x)\ell(x)}(\Omega)}\leq 1\Rightarrow 	\big\|u \big\| _{L^{h(x)\ell(x)}(\Omega)}^{h^{-}}
		\leq \big\|\big| u\big| ^{h(x)}\big\| _{L^{\ell(x)}(\Omega)}\leq \big\| u \big\| _{L^{h(x)\ell(x)}(\Omega)}^{h^{+}},
		\\
		\big\| u \big\| _{L^{h(x)\ell(x)}(\Omega)}\geq 1\Rightarrow \big\|u \big\| _{L^{h(x)\ell(x)}(\Omega)}^{p^{+}}
		\leq \big\| \big| u \big| ^{h(x)}\big\| _{L^{\ell(x)}(\Omega)}\leq \big\| u \big\| _{L^{h(x)\ell(x)}(\Omega)}^{h^{-}}.
	\end{gather*}
	 When  $h(x)=h$ is constant, we obtain 
$
	\big\| \big| u \big| ^{h}\| _{L^{\ell(x)}(\Omega)}
	=\big\|u \big\|_{L^{h\ell(x)}(\Omega)}^{h}.
	$
\end{proposition}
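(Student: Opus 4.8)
\textbf{Proof strategy for Proposition~\ref{prop22}.}

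The plan is to reduce everything to the fundamental modular--norm inequalities \eqref{L23}--\eqref{L24} applied to the function $v:=|u|^{h(x)}$ in the space $L^{\ell(x)}(\Omega)$. The key algebraic observation is that the modular of $v$ in $L^{\ell(x)}(\Omega)$ relates to a modular of $u$: namely
\[
\rho_{\ell,\Omega}(v)=\int_{\Omega}\big||u(x)|^{h(x)}\big|^{\ell(x)}\,dx=\int_{\Omega}|u(x)|^{h(x)\ell(x)}\,dx=\rho_{h\ell,\Omega}(u).
\]
So the whole matter is to compare $\rho_{h\ell,\Omega}(u)$ with the norm $\|u\|_{L^{h(x)\ell(x)}(\Omega)}$ on the one hand, and with $\big\||u|^{h(x)}\big\|_{L^{\ell(x)}(\Omega)}$ on the other, the latter via \eqref{L22}.

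First I would fix the notation $\Lambda:=\|u\|_{L^{h(x)\ell(x)}(\Omega)}$, which is finite and positive since $u\neq0$ and $u\in L^{\ell(x)}(\Omega)$ forces $|u|^{h(x)}\in L^{\ell(x)}(\Omega)$ hence $u\in L^{h(x)\ell(x)}(\Omega)$ (here one uses that $h\in L^{\infty}$, or rather the stated hypothesis $1\le h(x)\le\infty$; I would be careful to invoke that $h^{+}<\infty$ is implicit, otherwise the exponents $h^{-}$, $h^{+}$ in the conclusion are meaningless). Consider the case $\Lambda\le 1$. By \eqref{L22} applied to $u$ in $L^{h(x)\ell(x)}(\Omega)$ we get $\rho_{h\ell,\Omega}(u)\le 1$, i.e.\ $\rho_{\ell,\Omega}(v)\le 1$, hence again by \eqref{L22} we get $\|v\|_{L^{\ell(x)}(\Omega)}\le 1$. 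Now apply \eqref{L24} (the "$<1$" inequality, extended to the closed ball, which follows by the same proof or by continuity) to $u$ with exponent $h(x)\ell(x)$ whose infimum and supremum I would denote $(h\ell)^{-}$ and $(h\ell)^{+}$: this only gives bounds with those exponents, not with $h^{-},h^{+}$. To land on $h^{-}$ and $h^{+}$ one instead applies \eqref{L24} to $v$ in $L^{\ell(x)}(\Omega)$ combined with the pointwise estimate: when $\Lambda\le 1$, for the scaling parameter $\tau=\Lambda$ we have $\int_{\Omega}|u/\Lambda|^{h(x)\ell(x)}\le 1$, and since $\Lambda\le1$ forces $\Lambda^{-h(x)}\ge\Lambda^{-h^{-}}$ pointwise, one extracts $\big(\Lambda^{h^{-}}\big)^{-1}\text{(something)}$; the clean route is: from $\int|u/\Lambda|^{h\ell}\le1$ write it as $\int\big(|u|^{h}/\Lambda^{h}\big)^{\ell}\le1$ and note $|u|^{h}/\Lambda^{h^{-}}\le |u|^{h}/\Lambda^{h}$ pointwise (since $\Lambda\le1$), whence $\rho_{\ell,\Omega}\big(|u|^{h}/\Lambda^{h^{-}}\big)\le1$ and therefore by definition of the Luxemburg norm $\big\||u|^{h}\big\|_{L^{\ell(x)}(\Omega)}\le\Lambda^{h^{-}}$, which is the upper bound. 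For the lower bound, use instead $\Lambda^{h^{+}}$ and the reverse pointwise comparison to get $\rho_{\ell,\Omega}\big(|u|^{h}/\Lambda^{h^{+}}\big)\ge\int|u/\Lambda|^{h\ell}$; but we need this $\ge1$, which requires knowing $\rho_{h\ell,\Omega}(u/\Lambda)=1$ — true exactly when $\Lambda$ is attained as the norm, which holds for $L^{p(x)}$ spaces when the modular is finite, so $\rho_{\ell,\Omega}\big(|u|^{h}/\Lambda^{h^{+}}\big)\ge1$, giving $\big\||u|^{h}\big\|_{L^{\ell(x)}(\Omega)}\ge\Lambda^{h^{+}}$. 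Combining, $\Lambda^{h^{+}}\le\big\||u|^{h}\big\|_{L^{\ell(x)}(\Omega)}\le\Lambda^{h^{-}}$, which is exactly the first displayed chain once one checks that $\Lambda\le1\iff\big\||u|^{h}\big\|_{L^{\ell(x)}(\Omega)}\le1$ (immediate from the above, or from \eqref{L22} twice).

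The case $\Lambda\ge1$ is handled symmetrically, reversing all the pointwise inequalities $\Lambda^{h^{-}}\le\Lambda^{h}\le\Lambda^{h^{+}}$, and yields $\Lambda^{h^{-}}\le\big\||u|^{h}\big\|_{L^{\ell(x)}(\Omega)}\le\Lambda^{h^{+}}$. Note the second display in the proposition as stated uses $p^{+}$ where presumably $h^{+}$ is meant, a typo I would silently correct, and the role of $p\in L^{\infty}(\Omega)$ in the hypothesis list appears vestigial — it is not needed for this statement and I would not use it. Finally, the constant-exponent case $h(x)\equiv h$: then $h^{-}=h^{+}=h$, both chains collapse to $\big\||u|^{h}\big\|_{L^{\ell(x)}(\Omega)}=\Lambda^{h}=\|u\|_{L^{h\ell(x)}(\Omega)}^{h}$, which is the asserted identity; alternatively it follows in one line from the definition of the Luxemburg norm since $\int|u/\tau|^{h\ell}\le1\iff\int\big(|u|^{h}/\tau^{h}\big)^{\ell}\le1$, so $\tau$ is admissible for $u$ in $L^{h\ell}$ iff $\tau^{h}$ is admissible for $|u|^{h}$ in $L^{\ell}$.

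The main obstacle is bookkeeping rather than conceptual: one must be scrupulous about which exponent ($h^{\pm}$ versus $(h\ell)^{\pm}$ versus $\ell^{\pm}$) the modular inequalities \eqref{L23}--\eqref{L24} are being applied with, and the trick that makes $h^{\pm}$ appear (rather than $(h\ell)^{\pm}$) is the pointwise comparison $\Lambda^{h(x)}\lessgtr\Lambda^{h^{\mp}}$ according to whether $\Lambda\lessgtr1$, pushed through the definition of the Luxemburg norm of $|u|^{h(x)}$ directly, rather than through \eqref{L23}--\eqref{L24} for that norm. A secondary technical point worth stating carefully is that $\rho_{h\ell,\Omega}(u/\Lambda)=1$ when $0<\Lambda<\infty$ — this "norm is attained" fact for variable-exponent Lebesgue spaces (valid because $h^{+}\ell^{+}<\infty$) is what supplies the lower bounds, and I would cite it from Fan--Zhao \cite{Fan} or prove it in a sentence by the dominated convergence/monotone convergence argument.
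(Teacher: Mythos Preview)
The paper does not prove Proposition~\ref{prop22}; it is quoted from Edmunds--R\'akosn\'ik \cite{Edmu1} and stated without argument. So there is no ``paper's own proof'' to compare against, and your proposal stands on its own. The approach you outline --- set $v=|u|^{h(x)}$, use the identity $\rho_{\ell,\Omega}(v)=\rho_{h\ell,\Omega}(u)$, exploit that the Luxemburg norm is attained so that $\rho_{h\ell,\Omega}(u/\Lambda)=1$, and then push the pointwise comparisons $\Lambda^{h(x)}\lessgtr\Lambda^{h^{\mp}}$ through the definition of $\|\,\cdot\,\|_{L^{\ell(x)}}$ --- is exactly the standard proof and is correct.

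One point deserves correction, however. Your derivation for $\Lambda\le 1$ gives
\[
\Lambda^{h^{+}}\;\le\;\big\||u|^{h(x)}\big\|_{L^{\ell(x)}(\Omega)}\;\le\;\Lambda^{h^{-}},
\]
and for $\Lambda\ge 1$ the reverse,
\[
\Lambda^{h^{-}}\;\le\;\big\||u|^{h(x)}\big\|_{L^{\ell(x)}(\Omega)}\;\le\;\Lambda^{h^{+}}.
\]
This is the correct result (and is what one finds in \cite{Edmu1}). But it is \emph{not} ``exactly the first displayed chain'' as you write: the proposition as printed in the paper has the exponents $h^{-}$ and $h^{+}$ interchanged in both displays (and, as you already noticed, an unrelated ``$p^{+}$'' typo in the second). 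Indeed, for $\Lambda<1$ one has $\Lambda^{h^{-}}>\Lambda^{h^{+}}$, so the printed chain $\Lambda^{h^{-}}\le\cdots\le\Lambda^{h^{+}}$ would force equality and is plainly wrong. Rather than silently asserting agreement, you should flag that your (correct) argument establishes the corrected version of the inequalities, and that the proposition as stated contains this additional transcription error. Everything else in your plan --- including the remark that the hypothesis $p\in L^{\infty}(\Omega)$ is vestigial and the one-line derivation of the constant-$h$ case directly from the Luxemburg definition --- is sound.
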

Now, let us pass to the Sobolev space with variable exponent, that is,
$$
W^{1,p(x)}(\Omega):=\Big\{ u\in L^{p(x)}(\Omega):  \partial_{x_i}u \in L^{p(x)}(\Omega) \text{ for } i =1,\dots,N\Big\},
$$
where $\partial_{x_i}u= \frac{\partial u}{\partial x_i}$ represent the partial derivatives of $u$ with respect to $x_i$ in the weak sense. This space has a corresponding modular given by

$$\rho_{1,p(x)} (u):=\int_{\Omega} | u|^{p(x)} + |\nabla u|^{p(x)}dx$$
which yields the norm
\begin{equation*}
	\|u\|_{1,p(x)}:=\| u\|_{W^{1,p(x)}(\Omega)}: = \inf \left\{ \tau >0 ;~~ \rho_{1,p}\bigg(\frac{u(x)}{\tau}\bigg) \leq 1 \right \}.
\end{equation*}
Another possible choice of norm in $W^{1,p(x)}(\Omega)$ is  $\|u\| _{L^{p(x)}(\Omega)}+\| \nabla u \| _{L^{p(x)}(\Omega)}$. Both norms turn out to be equivalent but we use the first one for convenience.
It is well known that  $ W^{1,p(x)}(\Omega)$ is a separable and reflexive Banach spaces (see, e.g., Kov\'{a}\v{c}ik and R\'{a}kosn\'{i}k \cite[Theorem 3.1]{Kov1}).
As usual, we define by  $p^{\ast }(x)$ the critical Sobolev exponent and  $p^{\partial }(x)$ the critical
Sobolev trace exponent, respectively by
\[
p^{\ast }(x):=\begin{cases}
	\frac{Np(x)}{N-p(x)} &\text{for } p(x)<N \\
	+\infty &\text{for }p(x)\geq N
\end{cases}
\quad \text{ and }
p^{\partial}(x):=\begin{cases}
	\frac{(N-1)p(x)}{N-p(x)} &\text{for } p(x)<N \\
	+\infty &\text{for }p(x)\geq N
\end{cases}
\]
We recall the following crucial embeddings on $ W^{1,p(x)}(\Omega)$
\begin{proposition}[Diening et al.  \cite{Dien1}, Edmunds and Rakosnik \cite{Edmu1}] \label{prop23}	
	Let $p$ be Lipschitz continuous and satisfying $1<p^-\leq p(x)\leq p^+<N$, and let $q\in C(\overline{\Omega})$ satisfy	
	$ 1\leq q(x) \leq p^{\ast}(x),$ for all
	$x\in \overline{\Omega}.$	
	Then there exists a continuous embedding
	$W^{1,p(x)}(\Omega)\hookrightarrow L^{q(x)}(\Omega).$
	If we assume in addition that $1\leq q(x)< p^{\ast}(x)$ for all $x\in \overline{\Omega}$, 
	then
	 this embedding is compact.	
\end{proposition}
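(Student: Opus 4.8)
\textbf{Proof strategy for Proposition \ref{prop23}.}

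The plan is to establish the embedding $W^{1,p(x)}(\Omega)\hookrightarrow L^{q(x)}(\Omega)$ in two stages: first the borderline case $q(x)\equiv p^{\ast}(x)$ (continuous embedding only), and then the subcritical case $1\le q(x)<p^{\ast}(x)$ (compact embedding), deducing all intermediate continuous embeddings from Proposition \ref{prop21}. For the continuous critical embedding, I would first observe that since $p$ is Lipschitz continuous on the bounded Lipschitz domain $\Omega$ and $1<p^-\le p(x)\le p^+<N$, the exponent $p$ is in the class for which the classical Sobolev–Poincaré-type inequality for variable exponents holds; this is exactly the content of the Sobolev embedding theorem in Diening et al. \cite{Dien1} (and Edmunds–Rakosnik \cite{Edmu1}), which gives a constant $C>0$ such that $\|u\|_{L^{p^{\ast}(x)}(\Omega)}\le C\|u\|_{W^{1,p(x)}(\Omega)}$ for all $u\in W^{1,p(x)}(\Omega)$. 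The Lipschitz regularity of $p$ is used precisely here: it guarantees that $p$ satisfies the log-Hölder continuity condition, which is the minimal hypothesis under which the maximal operator is bounded on $L^{p(x)}$ and hence under which the variable-exponent Sobolev inequality is valid. Once this is in hand, for any $q\in C(\overline\Omega)$ with $1\le q(x)\le p^{\ast}(x)$ on $\overline\Omega$, Proposition \ref{prop21} applied with the pair $q\le p^{\ast}$ on the finite-measure set $\Omega$ yields the continuous inclusion $L^{p^{\ast}(x)}(\Omega)\hookrightarrow L^{q(x)}(\Omega)$, and composing gives $W^{1,p(x)}(\Omega)\hookrightarrow L^{q(x)}(\Omega)$ continuously.

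For the compactness claim under the strict inequality $1\le q(x)<p^{\ast}(x)$ on $\overline\Omega$, the key point is to upgrade the strict pointwise inequality to a uniform gap. Since $q$ and $p^{\ast}$ are continuous on the compact set $\overline\Omega$ (note $p^{\ast}$ is continuous because $p$ is continuous and bounded away from $N$), the function $x\mapsto p^{\ast}(x)-q(x)$ attains a positive minimum, so there exists $\varepsilon_0>0$ with $q(x)\le p^{\ast}(x)-\varepsilon_0$ for all $x\in\overline\Omega$. I would then choose an intermediate constant-or-variable exponent $r$ with $q(x)<r(x)<p^{\ast}(x)$ uniformly — for instance, one can take $r(x):=p^{\ast}(x)-\varepsilon_0/2$ when this exceeds $1$ everywhere, or more simply invoke the standard fact (again from Diening et al. \cite{Dien1} or Edmunds–Rakosnik \cite{Edmu1}) that the embedding $W^{1,p(x)}(\Omega)\hookrightarrow L^{r(x)}(\Omega)$ is compact whenever $\inf_{x\in\overline\Omega}(p^{\ast}(x)-r(x))>0$. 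With such an $r$ fixed, the compact embedding $W^{1,p(x)}(\Omega)\hookrightarrow\hookrightarrow L^{r(x)}(\Omega)$ composed with the continuous embedding $L^{r(x)}(\Omega)\hookrightarrow L^{q(x)}(\Omega)$ from Proposition \ref{prop21} (valid since $q\le r$ on the finite-measure set $\Omega$) yields the desired compact embedding $W^{1,p(x)}(\Omega)\hookrightarrow\hookrightarrow L^{q(x)}(\Omega)$, since the composition of a compact operator with a bounded one is compact.

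The main obstacle — or rather, the step requiring the most care — is the critical continuous embedding $W^{1,p(x)}(\Omega)\hookrightarrow L^{p^{\ast}(x)}(\Omega)$, because this is genuinely nontrivial and is where the Lipschitz (equivalently, log-Hölder) hypothesis on $p$ is indispensable; without it the embedding can fail, as is well documented in the variable-exponent literature. I would not attempt to reprove it from scratch but would cite the precise statements in Diening et al. \cite{Dien1} and Edmunds–Rakosnik \cite{Edmu1}, verifying only that our hypotheses ($p$ Lipschitz, $1<p^-\le p^+<N$, $\Omega$ bounded with Lipschitz boundary) match the hypotheses of those theorems. A secondary technical point worth stating explicitly is the continuity of $p^{\ast}$ on $\overline\Omega$ under the assumption $p^+<N$, which is what makes the uniform-gap argument in the compact case legitimate; this follows immediately from the formula $p^{\ast}(x)=Np(x)/(N-p(x))$ and the continuity of $p$.
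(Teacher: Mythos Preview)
The paper does not supply a proof of Proposition~\ref{prop23}; it is stated as a known result, attributed to Diening et al.~\cite{Dien1} and Edmunds--R\'akosn\'{\i}k~\cite{Edmu1}, and is used throughout without further justification. Your outline is therefore more than the paper itself offers: you correctly identify the critical embedding $W^{1,p(x)}(\Omega)\hookrightarrow L^{p^{\ast}(x)}(\Omega)$ from those references as the substantive step (and rightly flag the Lipschitz/log-H\"older hypothesis as indispensable there), then reduce the general continuous case via Proposition~\ref{prop21} and handle compactness by the standard uniform-gap argument on the compact set $\overline{\Omega}$. This is exactly how one would unpack the citations, and nothing in your sketch is wrong; just be aware that in the paper's logic the proposition functions as an imported black box, so a one-line citation matching the paper's own treatment would also be acceptable.
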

\begin{proposition}[Diening et al.  \cite{Dien1}, Edmunds and Rakosnik \cite{Edmu1}] \label{prop24}
	Let $p\in W^{1,h}(\Omega)$ with $1\leq p_{-}\leq p_{+}<N<h$. Then for all $q\in C(\partial \Omega)$ satisfying
	$1\leq q(x) \leq  p^{\partial}(x)$  for $x\in \partial \Omega$, there is a continuous boundary trace embedding
	$W^{1,p(x)}(\Omega)\hookrightarrow L^{q(x)}(\partial \Omega).$
	If we assume in addition that $1\leq q(x)< p^{\ast}(x)$ for all $x\in \overline{\Omega}$,   then this embedding is compact.	
\end{proposition}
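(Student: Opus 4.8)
The plan is to deduce the trace embedding from two classical ingredients --- the constant‑exponent trace theory and the interior variable‑exponent Sobolev embedding (Proposition~\ref{prop23}) --- after a standard localization. First, note that since $h>N$, the Morrey embedding $W^{1,h}(\Omega)\hookrightarrow C^{0,1-N/h}(\overline{\Omega})$ makes $p$ H\"older, hence log‑H\"older, continuous on $\overline{\Omega}$; combined with $p^{+}<N$ this forces $p^{\partial}(x)=\tfrac{(N-1)p(x)}{N-p(x)}$ to be a bounded continuous function on $\overline{\Omega}$ and, crucially, guarantees that $C^{\infty}(\overline{\Omega})$ is dense in $W^{1,p(x)}(\Omega)$. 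Since $\sigma(\partial\Omega)<\infty$ and $q(x)\le p^{\partial}(x)$, Proposition~\ref{prop21} gives the continuous inclusion $L^{p^{\partial}(x)}(\partial\Omega)\hookrightarrow L^{q(x)}(\partial\Omega)$, so it suffices to prove the continuous trace embedding $W^{1,p(x)}(\Omega)\hookrightarrow L^{p^{\partial}(x)}(\partial\Omega)$. For that I would (i) cover $\overline{\Omega}$ by finitely many sets $\{U_k\}$ with $\overline{U_0}\subset\Omega$ and each $U_k$ ($k\ge1$) a chart in which a bi‑Lipschitz map flattens $\partial\Omega\cap U_k$ onto a piece of a hyperplane (such a map transforms the $L^{p(x)}$- and $W^{1,p(x)}$-norms into equivalent ones and preserves the log‑H\"older modulus and the bounds of the exponent), and, via a subordinate partition of unity, reduce to the model half‑ball $B^{+}=B\cap\{x_N>0\}$ with flat face $B_0=B\cap\{x_N=0\}\subset\partial\Omega$; (ii) prove a modular trace estimate there for $u\in C^{1}(\overline{B^{+}})$ supported near $B_0$; (iii) sum over the atlas, pass to the limit by density, and convert the modular bound into a norm bound via \eqref{L22}--\eqref{L24}.

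The core is step (ii). Extending $p$ to a log‑H\"older function on $\mathbb{R}^N$ and writing $\gamma(x)=\tfrac{(N-1)p(x)}{N-p(x)}\ge1$ on $B^{+}$, one integrates the fundamental‑theorem‑of‑calculus identity $|u(x',0)|^{\gamma(x',0)}=-\int_{0}^{\infty}\partial_{x_N}\!\big(|u(x',x_N)|^{\gamma(x',x_N)}\big)\,dx_N$ over $x'$. Expanding the $x_N$-derivative splits the right‑hand side into a ``principal'' term bounded by $C\int_{B^{+}}|u|^{\gamma(x)-1}|\nabla u|\,dx$ and an ``exponent'' term produced by differentiating $\gamma$. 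For the principal term the decisive algebraic fact is the pointwise identity $(\gamma(x)-1)p'(x)=p^{*}(x)$: the H\"older inequality \eqref{Hol} then gives $\int_{B^{+}}|u|^{\gamma-1}|\nabla u|\le 2\,\big\||u|^{\gamma-1}\big\|_{L^{p'(x)}(B^{+})}\,\|\nabla u\|_{L^{p(x)}(B^{+})}$, where $\rho_{p',B^{+}}\!\big(|u|^{\gamma-1}\big)=\rho_{p^{*},B^{+}}(u)$ is finite and controlled by $\|u\|_{1,p(x)}$ thanks to the interior embedding $W^{1,p(x)}(\Omega)\hookrightarrow L^{p^{*}(x)}(\Omega)$ of Proposition~\ref{prop23}. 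The exponent term is the genuinely delicate point: because a crude sandwiching of $\gamma$ between nearby constants fails \emph{at the critical exponent}, one must run the estimate scale‑by‑scale, exploiting the log‑H\"older modulus of $p$ (equivalently, a dyadic decomposition of $\{|u|>1\}$ and $\{|u|\le1\}$), which absorbs it into $\int_{B^{+}}\big(|u|^{\gamma(x)}+|u|^{p^{*}(x)}\big)\,dx$ up to a fixed constant. Summing over the atlas yields $\rho_{p^{\partial},\partial\Omega}(u)\le C$ whenever $\|u\|_{1,p(x)}\le 1$, and then \eqref{L23}--\eqref{L24} turn this into $\|u\|_{L^{p^{\partial}(x)}(\partial\Omega)}\le C\|u\|_{1,p(x)}$, first for $u\in C^{1}(\overline{\Omega})$ and then for all $u\in W^{1,p(x)}(\Omega)$ by density.

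For the compactness assertion one needs the \emph{strict} inequality $q(x)<p^{\partial}(x)$ on $\partial\Omega$ (the subcriticality hypothesis in the statement is to be read this way; with merely $q<p^{*}$ the embedding is in general only continuous, as the case $q=p^{\partial}$ shows). Let $u_n\rightharpoonup 0$ in $W^{1,p(x)}(\Omega)$. Applying Proposition~\ref{prop21} to $u_n$ and to $\nabla u_n$, the sequence is bounded in $W^{1,p^{-}}(\Omega)$, so by the classical Rellich--Kondrachov trace theorem $u_n\to 0$ strongly in $L^{r}(\partial\Omega)$ for every constant $r<(p^{-})^{\partial}$, in particular in $L^{1}(\partial\Omega)$; meanwhile $\sup_n\|u_n\|_{L^{p^{\partial}(x)}(\partial\Omega)}<\infty$ by the continuous embedding just proved. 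Since $q,p^{\partial}\in C(\partial\Omega)$ with $\sup_{\partial\Omega}q/p^{\partial}<1$, one may fix $\theta\in(0,1)$ small enough that $s(x):=\big(\theta+(1-\theta)/p^{\partial}(x)\big)^{-1}$ satisfies $q(x)\le s(x)<p^{\partial}(x)$ throughout $\partial\Omega$; the standard variable‑exponent interpolation inequality $\|w\|_{L^{s(x)}(\partial\Omega)}\le C\|w\|_{L^{1}(\partial\Omega)}^{\theta}\|w\|_{L^{p^{\partial}(x)}(\partial\Omega)}^{1-\theta}$ together with Proposition~\ref{prop21} then gives $\|u_n\|_{L^{q(x)}(\partial\Omega)}\le\|u_n\|_{L^{s(x)}(\partial\Omega)}\le C\|u_n\|_{L^{1}(\partial\Omega)}^{\theta}\to 0$, which is the required compactness. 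The main obstacle throughout is step (ii) --- the non‑constant power $\gamma(x)$ inside the boundary integral and its interaction with criticality --- while the rest is bookkeeping with the modular--norm relations \eqref{L22}--\eqref{L25} and the classical constant‑exponent trace theory.
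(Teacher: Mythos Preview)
The paper does not prove this proposition; it is quoted from \cite{Dien1} and \cite{Edmu1} and used as a black box throughout. Your sketch is a faithful outline of the standard argument found in those references: the Morrey embedding gives log--H\"older continuity of $p$, one localizes and flattens the boundary, integrates in the normal direction, and exploits the algebraic identity $(\gamma-1)p'=p^{*}$ to close the H\"older estimate via the interior critical embedding of Proposition~\ref{prop23}; the exponent--derivative term is indeed the delicate point and is controlled exactly by the log--H\"older modulus, as you indicate. Your compactness argument via interpolation between $L^{1}(\partial\Omega)$ and $L^{p^{\partial}(x)}(\partial\Omega)$ is also standard and correct. You are right to flag that the printed subcriticality hypothesis ``$q(x)<p^{\ast}(x)$ on $\overline{\Omega}$'' is a misprint for ``$q(x)<p^{\partial}(x)$ on $\partial\Omega$''; the paper only ever invokes the result under the latter condition.
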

For detailed properties of the variable exponent Lebesgue-Sobolev spaces, we refer the reader to  Diening et al. \cite{Dien1}, Kov\'{a}\v{c}ik and R\'{a}kosn\'{i}k \cite{Kov1}.
As is well known, the use of critical point theory needs the well-known Palais–Smale condition ($(PS)_c$ for short) which plays a central role.
	\begin{definition}
	 Consider a function $E:X\to \mathbb{R}$ of class $C^1$, where $X$ is a real Banach space. We say that a sequence  $\{u_m\}$ is a Palais-Smale sequence
	 for the functional $E$ if
	\begin{equation}
		E(u_m) \to c   \text{ and } E'(u_m)\to 0 \text{ in } X',
	\end{equation}
	We say that $\{u_m\}$ is a Palais-Smale sequence with energy level $c$ (or $(u_m)$ is $(PS)_c$ for short). Moreover, if every $(PS)_c$ sequence for $E$ has a strongly convergent subsequence in $X$, then we say that $E$ satisfies the Palais-Smale condition at level $c$ (or $E$ is $(PS)_c$ short).
\end{definition}
  Our main tool is the following classical Mountain Pass Theorem.
\begin{theorem}[Rabinowitz  \cite{Rabinowitz}] \label{PMT}
	Let $X$ be a real infinite dimensional Banach space and  let $E: X \to \mathbb{R}$ be of class $C^{1}$ and satisfying the $(PS)_c$ such that $E(0_X)=0$. Assume that	
	\begin{itemize}
		\item[($H_1$)] there exist positive constants $ \rho,\mathcal{R} $ such that $E(u)\geq\mathcal{R},$  for all $u\in \partial B_\rho\cap X,$
		\item[($H_2$)]  there exists $z\in X$ with $\left\| z\right\|_{X} >\rho$ such that $E(z)<0$.
	\end{itemize}
	Then
	 $E$  has a critical value $c\geq \mathcal{R}$, which can be characterized as
	$c := \inf_{\phi \in \Gamma} \max_{ \delta \in \left[ 0,1\right]  }E(\phi(\delta)),$
	where
	$$\Gamma= \big\{ \phi : \left[ 0,1\right] \to X\ \text{continuous}: \phi(0)=0_X, E(\phi (1))<0 \big\}. $$
\end{theorem}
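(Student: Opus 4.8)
The plan is to prove Theorem \ref{PMT} by the classical deformation argument of Ambrosetti--Rabinowitz: the failure of $c$ to be a critical value is ruled out by using a deformation to push a near-optimal path strictly below level $c$, contradicting the definition of $c$ as an infimum.

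\emph{Step 1: the number $c$ is well defined and $c\geq\mathcal R$.} First I would note that $\Gamma\neq\emptyset$, since the segment $\phi_0(\delta):=\delta z$ is continuous with $\phi_0(0)=0_X$ and $E(\phi_0(1))=E(z)<0$ by $(H_2)$; as $E$ is continuous and $\phi_0([0,1])$ is compact, $\max_{\delta}E(\phi_0(\delta))<\infty$, so $c<\infty$. Next, fix any $\phi\in\Gamma$. Since $\phi(0)=0_X\in B_\rho$ and $\|\phi(1)\|_X=\|z\|_X>\rho$, the continuous map $\delta\mapsto\|\phi(\delta)\|_X$ attains the value $\rho$ at some $\delta_\ast\in(0,1)$, hence $\phi(\delta_\ast)\in\partial B_\rho\cap X$ and $(H_1)$ gives $E(\phi(\delta_\ast))\geq\mathcal R$. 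Therefore $\max_{\delta}E(\phi(\delta))\geq\mathcal R$ for every $\phi\in\Gamma$, and taking the infimum yields $c\geq\mathcal R>0$.

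\emph{Step 2: the quantitative deformation lemma.} Argue by contradiction: suppose $c$ is not a critical value, i.e. $K_c:=\{u\in X:\ E(u)=c,\ E'(u)=0\}=\emptyset$. Using the $(PS)_c$ condition, I would first show there are $\bar\varepsilon>0$ and $\mu>0$ with $\|E'(u)\|_{X'}\geq\mu$ whenever $|E(u)-c|\leq\bar\varepsilon$; otherwise one could extract a sequence $(u_m)$ with $E(u_m)\to c$ and $E'(u_m)\to 0$, which by $(PS)_c$ subconverges to a point of $K_c$, a contradiction. On the open set of regular points one then builds a locally Lipschitz pseudo-gradient field $V$ satisfying $\|V(u)\|_X\leq 2\|E'(u)\|_{X'}$ and $\langle E'(u),V(u)\rangle\geq\|E'(u)\|_{X'}^2$, truncates it by a Lipschitz cutoff that vanishes off $\{|E(u)-c|\leq\bar\varepsilon\}$, normalizes it, and solves the Cauchy problem $\tfrac{d}{dt}\eta(t,u)=-W(\eta(t,u))$, $\eta(0,u)=u$. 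Choosing $0<\varepsilon<\min\{\bar\varepsilon/2,\ c/2\}$ suitably, one obtains a continuous $\eta:[0,1]\times X\to X$ such that: (i) $\eta(0,u)=u$; (ii) $\eta(t,u)=u$ whenever $E(u)\notin[c-2\varepsilon,c+2\varepsilon]$; (iii) $t\mapsto E(\eta(t,u))$ is nonincreasing; (iv) $\eta\bigl(1,\{E\leq c+\varepsilon\}\bigr)\subset\{E\leq c-\varepsilon\}$.

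\emph{Step 3: conclusion.} By definition of $c$, choose $\phi\in\Gamma$ with $\max_{\delta\in[0,1]}E(\phi(\delta))\leq c+\varepsilon$, and set $\tilde\phi(\delta):=\eta(1,\phi(\delta))$, which is continuous. Since $E(0_X)=0<c-2\varepsilon$ and $E(\phi(1))=E(z)<0<c-2\varepsilon$ (using $\varepsilon<c/2$), property (ii) gives $\tilde\phi(0)=\eta(1,0_X)=0_X$ and $\tilde\phi(1)=\eta(1,z)=z$, so $\tilde\phi\in\Gamma$. On the other hand $\phi(\delta)\in\{E\leq c+\varepsilon\}$ for all $\delta$, so (iv) gives $E(\tilde\phi(\delta))\leq c-\varepsilon$ for all $\delta$, hence $\max_{\delta}E(\tilde\phi(\delta))\leq c-\varepsilon<c$, contradicting the definition of $c$ as the infimum over $\Gamma$. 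Thus $K_c\neq\emptyset$, so $E$ has a critical value equal to $c\geq\mathcal R$, and the min--max characterization holds by construction. The main obstacle is Step 2: carrying out the pseudo-gradient construction rigorously in a general (non-Hilbert) Banach space, ensuring the cut-off and normalized field is locally Lipschitz with a globally defined flow, and checking (i)--(iv); in particular it is precisely the $(PS)_c$ hypothesis that is needed to keep $\|E'\|_{X'}$ uniformly bounded below near level $c$. Steps 1 and 3 are elementary once the deformation $\eta$ is available.
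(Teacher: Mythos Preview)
The paper does not prove Theorem~\ref{PMT}; it is stated as a classical result from Rabinowitz~\cite{Rabinowitz} and invoked as a black box in Section~\ref{sec auxiliary}. So there is no ``paper's own proof'' to compare against. What you have written is precisely the standard Ambrosetti--Rabinowitz deformation argument that appears in \cite{Rabinowitz}, and it is the correct approach.

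One slip to flag: in Step~1 you write $\|\phi(1)\|_X=\|z\|_X$, and in Step~3 you write $E(\phi(1))=E(z)$, for an \emph{arbitrary} $\phi\in\Gamma$. But the paper's $\Gamma$ is defined by the condition $E(\phi(1))<0$, not by the fixed endpoint $\phi(1)=z$. In Step~3 this is harmless: all you actually need is $E(\phi(1))<0<c-2\varepsilon$, which follows directly from $\phi\in\Gamma$ and $\varepsilon<c/2$. In Step~1, however, it is a genuine gap with this formulation of $\Gamma$: hypothesis $(H_1)$ controls $E$ only on $\partial B_\rho$, so nothing stated excludes a point $u_0$ with $\|u_0\|_X<\rho$ and $E(u_0)<0$, and then the straight segment to $u_0$ lies in $\Gamma$ but never meets $\partial B_\rho$. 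In Rabinowitz's original statement $\Gamma$ has fixed endpoint $\phi(1)=z$ (with $\|z\|_X>\rho$), and then your intermediate-value argument is exact. Either restate $\Gamma$ that way, or note explicitly that in the intended applications (and in the paper's Lemmas~\ref{lemma32}--\ref{lemma33}) one has $E\geq 0$ on all of $\overline{B_\rho}$, which forces $\|\phi(1)\|_X>\rho$ for every $\phi\in\Gamma$ and repairs Step~1.
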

In the sequel, we shall use the product space
$
X:=\prod_{i=1}^{n}\Big(W^{1,h_i(x)}(\Omega)\cap W^{1,p_i(x)}(\Omega)\Big),
$
equipped with the norm
$
\| u\|:=\max_{1\leq i \leq n}\big\{  \|  u_i\|_{i} \big\} ,$
for all
$ u=(u_1,u_2,\dots,u_n)\in X,
$
where $\| u_i\|_{i}:= \| u_i\|_{1,p_i(x)}+ \mathcal{K}(k_i^3)\|  u_i\|_{1,q_i(x)}$, is the norm of $ W^{1,h_i(x)}(\Omega)\cap W^{1,p_i(x)}(\Omega), $ for all $i \in \{1,2,\dots,n\}$. 
\begin{definition}
	 We say that $u=(u_1,u_2,\dots,u_n) \in X$ is a weak solution of the system  \eqref{s1.1} if 
	\begin{align*}
		\sum_{i=1}^{n} M_i\left(
		\mathcal{A}_i(u_i)\right)\int_{\Omega  } \Big(
		\mathcal{B}_{1_i}(\nabla u_i)\nabla v_i + \mathcal{B}_{2_i}(u_i) v_i
		\Big)\,dx - \sum_{i=1}^{n}\int_{\Omega  }|u_i|^{s_i(x)-2}u_i v_i dx  
		- \sum_{i=1}^{n}\int_{\partial \Omega  }|u_i|^{\ell_i(x)-2}u_i v_i\,d\sigma_x\\	- \sum_{i=1}^{n}\displaystyle\int_{\Omega  }\lambda F_{u_i}(x,u)v_i\,dx =0,
	\end{align*}
	for all $v=(v_1,v_2,\dots,v_n)\in X= \prod_{i=1}^{n}(W^{1,h_i(x)}(\Omega)\cap W^{1,p_i(x)}(\Omega))$.
\end{definition}
The energy functional  $E_{\lambda}:X\longrightarrow\mathbb{R} $ associated with problem \eqref{s1.1} is defined as follows: 
$E_{\lambda}(.):= \Phi(.)- \Psi (.) -\Upsilon(.)-\mathcal{F}_{\lambda}(.),$
where $\Phi, \Psi \text{ and } \mathcal{F}_{\lambda}:X\longrightarrow\mathbb{R} $
are given by
\begin{align*}
	\Phi(u)=\sum_{i=1}^{n}\widehat{M_i}\left(
	\mathcal{A}_i(u_{i}(x))\right),	&\quad
	\Psi(u)=	\sum_{i=1}^{n}\displaystyle\int_{\Omega  }\dfrac{1}{s_i(x)}|u_i|^{s_i(x)}dx, \\
	\Upsilon(u)=	\sum_{i=1}^{n}\displaystyle\int_{\partial\Omega  }\dfrac{1}{\ell_i(x)}|u_i|^{\ell_i(x)}d\sigma_x& \text{ and }
	\mathcal{F}_{\lambda}(u)=\int_{\Omega}\lambda F(x,u)dx,
\end{align*}
for all $u=(u_1,\dots,u_n)$ in $X$, where 
$
\widehat{M}_{i}(\tau)= \int_{0}^{\tau}M_{i}(s)ds.
$
By  standard calculus, one can see that under the above assumptions, the energy functional $E_{\lambda}:X\to \mathbb{R}^N$ corresponding to problem (\ref{s1.1}) is well defined and  $E_{\lambda}\in C^1(X,\mathbb{R})$ with
\begin{align*}
	\left\langle E_\lambda'(u),v\right\rangle &= 	\sum_{i=1}^{n} M_i\left(
	\mathcal{A}_i(u_i)\right)\int_{\Omega  } \Big(
	\mathcal{B}_{1_i}(\nabla u_i)\nabla v_i + \mathcal{B}_{2_i}(u_i) v_i
	\Big)\,dx - \sum_{i=1}^{n}\int_{\Omega  }|u_i|^{s_i(x)-2}u_i v_i \,dx \\
	&\quad- \sum_{i=1}^{n}\int_{\partial \Omega  }|u_i|^{t_i(x)-2}u_i v_i\,d\sigma_x
	- \sum_{i=1}^{n}\displaystyle\int_{\Omega  }\lambda F_{u_i}(x,u)v_i\,dx ,
\end{align*}
for all $v=(v_1,v_2,\dots,v_n)\in X$.  So  the critical points of functional $E_{\lambda}$ are weak solutions of system \eqref{s1.1}. \\
To prove our existence result, since we have lost 
compactness in the inclusions  $W^{1,h_i(x)}(\Omega)\hookrightarrow L^{h_i^\ast(x)}(\Omega)$ and $W^{1,h_i(x)}(\Omega)\hookrightarrow L^{h_i^\partial(x)}(\partial\Omega),$ for all $i$ in $\{1,2,\dots,n\},$ we can no longer expect the Palais–Smale condition to hold. Nevertheless, we can prove a local Palais–Smale condition that will hold for the energy functional $E_{\lambda}$ below a certain value of energy, by using the principle of concentration compactness for the variable exponent Sobolev space $W^{1,h_i(x)}(\Omega)$. For  reader's convenience, we state this result in order to prove Theorem \ref{thm.result}, see Bonder et al.  \cite{Bonder, Bonder2} for its proof.\\
	Now, let $\mathcal{O}$ be a different subset of $\partial \Omega$, a  closed set (possibly empty). Set	
	$$
		W^{1,h(x)}_{\mathcal{O}}(\Omega):= \overline{\{ v \in C^{\infty}(\overline{\Omega }) : \text{ $v$ vanishes on a neighborhood of $\mathcal{O}$ }\}}$$
		where closure is taken with respect to  $\ \big\| v\big\| _{1,h(x)}.$
	This is the subspace of functions vanishing on $\mathcal{O}$. Evidently, $W^{1,h(x)}_{\emptyset}(\Omega) =W^{1,h(x)}(\Omega)$.  In general, $W^{1,h(x)}_{\mathcal{O}}(\Omega)=W^{1,h_i(x)}(\Omega)$
	if and
	only if the $h_i(x)$-capacity of $\mathcal{O}$ equals zero, for more details we refer the interested readers to Harjulehto et al. \cite{Har}. The best Sobolev trace constant
	$T_i(h_i(x),\ell_i(x),\mathcal{O})$ is defined by	
	$$ 0<T_i(h_i(x),\ell_i(x),\mathcal{O}):=\inf_{v\in W^{1,h_i(x)}_{\mathcal{O}}(\Omega)}
	\frac{\|  v  \|_{W^{1,h_i(x)}(\Omega)}}{\|v \|_{L^{\ell_i(x)}(\partial\Omega)}}.$$	
	\begin{theorem}[Bonder et al.  \cite{Bonder2}]\label{ccp}		
		Let $h_i\in C_{+}(\overline{\Omega})$, $\ell_i \in C_{+}(\partial \Omega)$ be such that $\ell_i(x)\leq h_i^{ \partial }(x),  \ \hbox{for all} \ x \in \partial \Omega$, 
		and $\{u_m\}_{m\in\mathbb{N}}$ be a sequence in
		$W^{1,h_i(x)}(\Omega)$  such that $u_{i_m} \rightharpoonup u_i$
		 weakly  in $W^{1,h_i(x)}(\Omega)$.		 
Then
 there exist a countable  index set $J_i^1$, positive numbers
$ \{\mu_{i_j}\}_{j \in J_i^1}$ and $ \{\nu_{i_j}\}_{j \in J_i^1},$ and 
$\{x_j\}_{j\in J_i^1}\subset \mathcal{C}_{h_i}^1= \{x\in \partial\Omega\colon \ell_i(x)=h_i^{\partial}(x)\}$ such that 
		\begin{gather}
		|u_{i_m}|^{\ell_i(x)}\rightharpoonup	\nu_{i}=|u_i|^{\ell_i(x)} + \sum_{j\in J_i^1}\nu_{i_j}\delta_{x_j}\ \text{  weakly-*  in the sense of measures,}\\\label{2.1a1}
		|\nabla u_{i_m}|^{h_i(x)}\rightharpoonup	\mu_i \geq |\nabla u_i|^{h_i(x)} + \sum_{j\in I} \mu_{i_j} \delta_{x_j} \ \ \text{  weakly-*
			in the sense of measures,}\\ \label{2.1a2}
			\overline{ T_i }_{x_j} \nu_{i_j}^{1/h_i^{\partial}(x_j)} \leq \mu_{i_j}^{1/h_i(x_j)} \ \ \hbox{for all} \ j\in J_i^1,
		\end{gather}
		where 
		\begin{equation}\label{GNS}
			\overline{ T_i }_{x_j} :=\sup_{\epsilon >0}
			T(h_i(x),\ell_i(x),\Omega_{\epsilon,j},\Lambda_{\epsilon,j}),
		\end{equation}
	is the localized Sobolev trace constant with  $\Omega_{\epsilon,j} =\Omega \cap B_{\epsilon}(x_j)$ and $\Lambda_{\epsilon,j} =\Omega \cap \partial B_{\epsilon}(x_j)$.		
	\end{theorem}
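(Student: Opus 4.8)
The plan is to adapt Lions's concentration--compactness argument to the variable-exponent trace setting. Since $u_{i_m}\rightharpoonup u_i$ in $W^{1,h_i(x)}(\Omega)$, the sequence is bounded there, so $\{|\nabla u_{i_m}|^{h_i(x)}\,dx\}_m$ is bounded in total variation on $\overline\Omega$ and, by the continuous trace embedding of Proposition~\ref{prop24} (available since $\ell_i\le h_i^{\partial}$), so is $\{|u_{i_m}|^{\ell_i(x)}\,d\sigma\}_m$ on $\partial\Omega$. By the Banach--Alaoglu theorem, after passing to a subsequence there are nonnegative finite Radon measures $\mu_i$ on $\overline\Omega$ and $\nu_i$ on $\partial\Omega$ with $|\nabla u_{i_m}|^{h_i(x)}\rightharpoonup\mu_i$ and $|u_{i_m}|^{\ell_i(x)}\rightharpoonup\nu_i$ weakly-$*$. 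Because the trace operator of Proposition~\ref{prop24} is linear and continuous, the traces of $u_{i_m}$ converge weakly to that of $u_i$ in $L^{\ell_i(x)}(\partial\Omega)$; combining this with the weak lower semicontinuity of the $h_i(x)$-modular of the gradient and of the $\ell_i(x)$-modular of the trace (both being convex and strongly continuous) gives $\mu_i\ge|\nabla u_i|^{h_i(x)}\,dx$ and $\nu_i\ge|u_i|^{\ell_i(x)}\,d\sigma$. It then remains to describe the nonnegative excess measures $\widetilde\mu_i:=\mu_i-|\nabla u_i|^{h_i(x)}\,dx$ and $\widetilde\nu_i:=\nu_i-|u_i|^{\ell_i(x)}\,d\sigma$.

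First I would show that $\widetilde\nu_i$ is supported in $\mathcal{C}_{h_i}^1$. Fix $x_0\in\partial\Omega\setminus\mathcal{C}_{h_i}^1$; by continuity of $\ell_i$ and $h_i^{\partial}$ there is $r>0$ with $\sup_{\overline{B_r(x_0)}\cap\partial\Omega}\ell_i<\inf_{\overline{B_r(x_0)}}h_i^{\partial}$, so by Proposition~\ref{prop24} the trace embedding $W^{1,h_i(x)}(\Omega\cap B_r(x_0))\hookrightarrow L^{\ell_i(x)}(\partial\Omega\cap B_r(x_0))$ is compact. Hence for every $\phi\in C_c^{\infty}(B_r(x_0))$ the traces of $\phi u_{i_m}$ converge strongly in $L^{\ell_i(x)}(\partial\Omega)$ to that of $\phi u_i$, so $|\phi u_{i_m}|^{\ell_i(x)}\to|\phi u_i|^{\ell_i(x)}$ in $L^1(\partial\Omega)$ and therefore $\int_{\partial\Omega}|\phi|^{\ell_i(x)}\,d\nu_i=\int_{\partial\Omega}|\phi u_i|^{\ell_i(x)}\,d\sigma$. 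Since $x_0$ and $\phi$ are arbitrary, $\widetilde\nu_i$ vanishes on $\partial\Omega\setminus\mathcal{C}_{h_i}^1$.

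Next I would extract the atoms. Let $x_0$ be an atom of $\widetilde\nu_i$, so $x_0\in\mathcal{C}_{h_i}^1$ and $\ell_i(x_0)=h_i^{\partial}(x_0)$, and for small $\epsilon>0$ pick $\phi_\epsilon\in C^{\infty}(\overline\Omega)$ with $0\le\phi_\epsilon\le1$, $\phi_\epsilon\equiv1$ on $B_{\epsilon/2}(x_0)$ and $\mathrm{supp}\,\phi_\epsilon\subset B_\epsilon(x_0)$. Applying the definition of the localized trace constant to $\phi_\epsilon u_{i_m}$ gives
\[
T(h_i(x),\ell_i(x),\Omega_{\epsilon,x_0},\Lambda_{\epsilon,x_0})\,\|\phi_\epsilon u_{i_m}\|_{L^{\ell_i(x)}(\partial\Omega)}\le\|\phi_\epsilon u_{i_m}\|_{W^{1,h_i(x)}(\Omega)},
\]
where $\Omega_{\epsilon,x_0}=\Omega\cap B_\epsilon(x_0)$ and $\Lambda_{\epsilon,x_0}=\Omega\cap\partial B_\epsilon(x_0)$. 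On the right, $\nabla(\phi_\epsilon u_{i_m})=\phi_\epsilon\nabla u_{i_m}+u_{i_m}\nabla\phi_\epsilon$; since $W^{1,h_i(x)}(\Omega)\hookrightarrow L^{h_i(x)}(\Omega)$ is compact, $u_{i_m}\nabla\phi_\epsilon\to u_i\nabla\phi_\epsilon$ strongly in $L^{h_i(x)}(\Omega)$, while $\|u_i\nabla\phi_\epsilon\|_{L^{h_i(x)}(\Omega)}\to0$ as $\epsilon\to0$ by a Hölder estimate on the annulus $B_\epsilon(x_0)\setminus B_{\epsilon/2}(x_0)$ using $u_i\in L^{h_i^{\ast}(x)}(\Omega)$ and the continuity of $h_i$. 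Together with $\int_\Omega|\phi_\epsilon|^{h_i(x)}|\nabla u_{i_m}|^{h_i(x)}\,dx\to\int_{\overline\Omega}|\phi_\epsilon|^{h_i(x)}\,d\mu_i$ and the modular--norm comparison \eqref{L23}--\eqref{L24}, this yields $\limsup_{\epsilon\to0}\limsup_m\|\phi_\epsilon u_{i_m}\|_{W^{1,h_i(x)}(\Omega)}\le\mu_i(\{x_0\})^{1/h_i(x_0)}$; on the left, since $\phi_\epsilon(x_0)=1$ and $\ell_i(x_0)=h_i^{\partial}(x_0)$, one gets $\liminf_{\epsilon\to0}\liminf_m\|\phi_\epsilon u_{i_m}\|_{L^{\ell_i(x)}(\partial\Omega)}\ge\nu_i(\{x_0\})^{1/h_i^{\partial}(x_0)}$. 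As $T(h_i(x),\ell_i(x),\Omega_{\epsilon,x_0},\Lambda_{\epsilon,x_0})\to\overline{T_i}_{x_0}$ when $\epsilon\to0$ by \eqref{GNS}, we conclude $\overline{T_i}_{x_0}\,\nu_i(\{x_0\})^{1/h_i^{\partial}(x_0)}\le\mu_i(\{x_0\})^{1/h_i(x_0)}$; in particular $\mu_i(\{x_0\})>0$ since $\overline{T_i}_{x_0}>0$ and $\nu_i(\{x_0\})>0$. Because $\nu_i$ is finite, $\widetilde\nu_i$ can have only countably many atoms $\{x_j\}_{j\in J_i^1}\subset\mathcal{C}_{h_i}^1$; the same reverse-Hölder estimate run at an arbitrary point of $\mathrm{supp}\,\widetilde\nu_i$ shows $\widetilde\nu_i$ has no non-atomic part on $\mathcal{C}_{h_i}^1$ either, so $\widetilde\nu_i=\sum_{j}\nu_{i_j}\delta_{x_j}$ with $\nu_{i_j}:=\nu_i(\{x_j\})$, and setting $\mu_{i_j}:=\mu_i(\{x_j\})>0$ — mutually singular with $|\nabla u_i|^{h_i(x)}\,dx$ — gives the two stated weak-$*$ decompositions together with the asserted inequality between $\mu_{i_j}$ and $\nu_{i_j}$.

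The step I expect to be the main obstacle is the double limit in the localized trace inequality: unlike the constant-exponent case, the Luxemburg norm and the modular agree only up to the exponents $\underline{h}_\epsilon:=\inf_{\overline{B_\epsilon(x_0)}}h_i$ and $\overline{h}_\epsilon:=\sup_{\overline{B_\epsilon(x_0)}}h_i$ (and similarly for $\ell_i$), so \eqref{L23}--\eqref{L24} supply only two-sided bounds; one must use the continuity of $h_i$ and $\ell_i$ to force $\underline{h}_\epsilon,\overline{h}_\epsilon\to h_i(x_0)$ and to keep the $\epsilon$-dependent correction factors (for instance, powers of $\epsilon$ with exponent $\overline{h}_\epsilon-\underline{h}_\epsilon$ produced by the annulus estimate) bounded, so that after letting $m\to\infty$ and then $\epsilon\to0$ the sharp exponents $h_i(x_j)$, $h_i^{\partial}(x_j)$ and the exact localized constant $\overline{T_i}_{x_j}$ survive in the limit.
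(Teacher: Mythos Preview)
The paper does not prove this theorem. It is stated as a quoted result from Bonder, Saintier and Silva \cite{Bonder2}; the paragraph immediately preceding Theorems~\ref{ccp} and~\ref{ccpp} says explicitly ``For reader's convenience, we state this result in order to prove Theorem~\ref{thm.result}, see Bonder et al.\ \cite{Bonder,Bonder2} for its proof.'' So there is no proof in the paper to compare your attempt against.

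That said, your outline is the standard Lions-type argument adapted to the variable-exponent trace setting and is, in its broad strokes, the route taken in \cite{Bonder2}: extract weak-$*$ limits of the bounded sequences of measures, use compactness of the trace embedding away from the critical set $\mathcal{C}_{h_i}^1$ to localize the singular part of $\nu_i$, and then run a localized Sobolev trace inequality at each atom with shrinking cutoffs to obtain the reverse-H\"older relation \eqref{2.1a2}. Your identification of the main technical obstacle---controlling the gap between modular and Luxemburg norm via the oscillation of $h_i$ and $\ell_i$ on shrinking balls, so that the exponents converge to $h_i(x_j)$ and $h_i^{\partial}(x_j)$ in the double limit---is exactly the point where the variable-exponent version departs from the classical one and requires the continuity hypotheses on the exponents. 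One step you pass over quickly is the assertion that $\widetilde\nu_i$ has no non-atomic part on $\mathcal{C}_{h_i}^1$: this is where one typically invokes a reverse-H\"older inequality of the form $\nu_i(E)^{1/h_i^{\partial}(x_j)}\le C\,\mu_i(E)^{1/h_i(x_j)}$ for all Borel $E$ near $x_j$, and then argues that since $h_i^{\partial}(x_j)>h_i(x_j)$ and $\mu_i$ is finite, $\nu_i$ must be purely atomic with respect to $\mu_i$; you should make that step explicit rather than just saying ``the same reverse-H\"older estimate run at an arbitrary point'' handles it.
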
	
	\begin{theorem}[Bonder and Silva  \cite{Bonder}]\label{ccpp}		
			Let $h_i$ and $s_i$ be variable exponents $ \in C_{+}(\overline{\Omega})$
			such that $s_i(x)\leq h_i^{ \ast }(x) , \ \hbox{for all} \ x \in \overline{\Omega}$, 
		and $\{u_m\}_{m\in\mathbb{N}}$ be a sequence in
		$W^{1,h_i(x)}(\Omega)$  such that $u_{i_m} \rightharpoonup u_i$
		weakly  in $W^{1,h_i(x)}(\Omega)$.		
		Then there exist a countable set $J_i^2$, positive numbers
		$ \{\mu_{i_j}\}_{j \in J_i^2}$ and $ \{\nu_{i_j}\}_{j \in J_i^2},$ and 
		$\{x_j\}_{j\in J_i^2}\subset \mathcal{C}_{h_i}^2= \{x\in \Omega\colon s_i(x)=h_i^{\ast}(x)\}$ such that 
		\begin{gather}
		|u_{i_m}|^{s_i(x)}\rightharpoonup	\nu_i=|u_i|^{s_i(x)} + \sum_{j\in I}\nu_{i_j}\delta_{x_j}\ \text{ weakly-*
			in the sense of measures},\\
		|\nabla u_{i_m}|^{h_i(x)} \rightharpoonup	\mu_i \geq |\nabla u_i|^{h_i(x)} + \sum_{j\in I} \mu_{i_j} \delta_{x_j} \ \text{ weakly-* in the sense of measures},\\ \label{2.1b}
			S_i \nu_{i_j}^{1/h_i^\ast(x_j)} \leq \mu_{i_j}^{1/h_i(x_j)} \ \ \hbox{for all} \ j\in J_i^2,
		\end{gather}
		where  
		\begin{equation}\label{GNSS}
			S_i = S_{i_{q_i}}(\Omega) :=\inf_{\phi\in C_0^{\infty}(\Omega)}
			\frac{\| |\nabla v| \|_{L^{p_i(x)}(\Omega)}}{\| v \|_{L^{s_i(x)}(\Omega)}},
		\end{equation}
	is the best constant in the Gagliardo-Nirenberg-Sobolev inequality for variable exponents	
	\end{theorem}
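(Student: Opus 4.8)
The plan is to prove this by adapting Lions's second concentration-compactness principle to the variable-exponent setting, following Bonder and Silva. Since $\{u_{i_m}\}$ is bounded in $W^{1,h_i(x)}(\Omega)$, the nonnegative Radon measures $|\nabla u_{i_m}|^{h_i(x)}\,dx$ and $|u_{i_m}|^{s_i(x)}\,dx$ are bounded in $C(\overline{\Omega})'$, so after passing to a subsequence I may assume $|\nabla u_{i_m}|^{h_i(x)}\rightharpoonup\mu_i$ and $|u_{i_m}|^{s_i(x)}\rightharpoonup\nu_i$ weakly-$*$ as measures; along a further subsequence $u_{i_m}\to u_i$ strongly in $L^{h_i(x)}(\Omega)$ (compact embedding, Proposition \ref{prop23}, using $h_i(x)<h_i^{\ast}(x)$) and a.e. in $\Omega$. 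A Brezis--Lieb type identity for the $s_i(x)$-modular --- obtained from the pointwise bound $\big||a+b|^{t}-|a|^{t}\big|\le\varepsilon|a|^{t}+C_\varepsilon|b|^{t}$ with $t=s_i(x)$, the a.e. convergence, and dominated convergence --- shows $|u_{i_m}|^{s_i(x)}-|u_{i_m}-u_i|^{s_i(x)}-|u_i|^{s_i(x)}\to0$ in $L^1(\Omega)$. Hence, writing $v_m:=u_{i_m}-u_i\rightharpoonup0$, one has $\nu_i=|u_i|^{s_i(x)}\,dx+\tilde\nu_i$ where $|v_m|^{s_i(x)}\rightharpoonup\tilde\nu_i$, and it suffices to analyse $\tilde\nu_i$, i.e.\ to treat the case $u_i\equiv0$.

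Assume then $u_i\equiv0$. For $\phi\in C_0^{\infty}(\Omega)$ I apply the variable-exponent Gagliardo--Nirenberg--Sobolev inequality attached to the constant $S_i$ of \eqref{GNSS} to $\phi u_{i_m}\in W^{1,h_i(x)}(\Omega)$ together with the product rule:
\[
S_i\,\|\phi u_{i_m}\|_{L^{s_i(x)}(\Omega)}\le\big\|\nabla(\phi u_{i_m})\big\|_{L^{h_i(x)}(\Omega)}\le\big\|\phi\,\nabla u_{i_m}\big\|_{L^{h_i(x)}(\Omega)}+\big\|u_{i_m}\,\nabla\phi\big\|_{L^{h_i(x)}(\Omega)}.
\]
Since $u_{i_m}\to0$ strongly in $L^{h_i(x)}(\Omega)$, the last term tends to $0$. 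Passing to the limit in the remaining terms by means of the definitions of $\mu_i$ and $\nu_i$ and the modular--norm relations \eqref{L22}--\eqref{L25} together with Proposition \ref{prop22} (which is what replaces the homogeneity of the norm, unavailable here), I obtain the reverse H\"older inequality between measures
\[
S_i\,\big\|\phi\big\|_{L^{s_i(x)}(\overline{\Omega},\,\nu_i)}\le\big\|\phi\big\|_{L^{h_i(x)}(\overline{\Omega},\,\mu_i)}\qquad\text{for all }\phi\in C_0^{\infty}(\Omega).
\]

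From this inequality the conclusion follows by a now-standard argument (the variable-exponent analogue of the one of Lions): testing with cutoffs $\phi_{\varepsilon,x_0}$ supported in $B_{\varepsilon}(x_0)$ and equal to $1$ near $x_0$ shows that $\nu_i(\{x_0\})>0$ forces $\mu_i(\{x_0\})>0$, and finiteness of $\mu_i$ makes the set of such atoms $\{x_j\}$ countable; letting $\varepsilon\to0$ and using the continuity of $h_i$ and $s_i$ (so that the localized Sobolev constant converges to $S_i$) gives $S_i\,\nu_{i_j}^{1/h_i^{\ast}(x_j)}\le\mu_{i_j}^{1/h_i(x_j)}$, with $\nu_{i_j},\mu_{i_j}$ the masses of the atoms. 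If $s_i(x_0)<h_i^{\ast}(x_0)$ at some point, then $s_i\le h_i^{\ast}-\delta$ on a neighbourhood $U$ and Proposition \ref{prop23} gives a compact embedding $W^{1,h_i(x)}(U)\hookrightarrow L^{s_i(x)}(U)$, which precludes concentration there, so every atom lies in $\mathcal{C}^2_{h_i}$; thus $\nu_i=\sum_j\nu_{i_j}\delta_{x_j}$ in the reduced case, and $\nu_i=|u_i|^{s_i(x)}\,dx+\sum_j\nu_{i_j}\delta_{x_j}$ in general. Finally, applying the reduced case to $v_m$ also produces $\tilde\mu_i\ge\sum_j\mu_{i_j}\delta_{x_j}$ where $|\nabla v_m|^{h_i(x)}\rightharpoonup\tilde\mu_i$; combining $\mu_i\ge|\nabla u_i|^{h_i(x)}\,dx$ (weak lower semicontinuity of $v\mapsto\int\phi|\nabla v|^{h_i(x)}$) with $\mu_i(\{x_j\})\ge\mu_{i_j}$ (triangle inequality on small balls plus absolute continuity of $|\nabla u_i|^{h_i(x)}\,dx$) yields $\mu_i\ge|\nabla u_i|^{h_i(x)}\,dx+\sum_j\mu_{i_j}\delta_{x_j}$, which completes the proof.

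The step I expect to be the main obstacle is the passage from the inequality for $\phi u_{i_m}$ to the clean reverse H\"older inequality between $\mu_i$ and $\nu_i$: because the Luxemburg norm is not positively homogeneous, one cannot normalize as in Lions's original argument and must instead keep track of whether each norm is $\le1$ or $>1$ via \eqref{L22}--\eqref{L25}, apply the two-sided bounds of Proposition \ref{prop22} to quantities of the form $\big\||\phi|^{h_i(x)}\big\|_{L^{\,\cdot\,}}$, and take the limit along a carefully chosen localized family of test functions; this bookkeeping, together with the accompanying limit $\varepsilon\to0$ in the localized Sobolev constant \eqref{GNSS}, is precisely where the variable-exponent proof is heavier than the constant-exponent one.
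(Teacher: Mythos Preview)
The paper does not actually prove this theorem: it is quoted from Bonder and Silva \cite{Bonder} (and \cite{Bonder2} for the trace version), and the text immediately preceding Theorems \ref{ccp} and \ref{ccpp} explicitly says ``For reader's convenience, we state this result \ldots, see Bonder et al.\ \cite{Bonder,Bonder2} for its proof.'' So there is no in-paper argument to compare against.

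That said, your sketch is precisely the Bonder--Silva strategy, and it is correct in outline: reduce to $u_i\equiv 0$ via a variable-exponent Brezis--Lieb lemma, apply the Sobolev inequality to $\phi u_{i_m}$, pass to the limit to obtain a reverse H\"older inequality between the limiting measures, and then run Lions's atomization argument localized at points of the critical set $\mathcal{C}^2_{h_i}$. You have also correctly isolated the one genuinely new difficulty relative to the constant-exponent case, namely that the Luxemburg norm is not homogeneous, so the passage from $S_i\|\phi u_{i_m}\|_{L^{s_i(x)}}\le\|\phi\nabla u_{i_m}\|_{L^{h_i(x)}}+o(1)$ to a clean inequality between $\nu_i$ and $\mu_i$ requires the modular--norm bookkeeping of \eqref{L22}--\eqref{L25} and Proposition \ref{prop22} rather than a simple normalization. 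Two minor remarks: (i) in the reduced case the conclusion that $\tilde\nu_i$ is purely atomic uses, beyond the compact embedding on the subcritical region, the fact that a finite measure satisfying $\nu(B)^{1/h_i^{\ast}}\lesssim\mu(B)^{1/h_i}$ with $h_i^{\ast}>h_i$ against a finite $\mu$ must be atomic---you allude to this but it is worth stating; (ii) your final paragraph on reconstructing the inequality for $\mu_i$ in the general case is the right idea (lower semicontinuity for the diffuse part, triangle inequality on shrinking balls for the atoms), but in a full write-up one should be explicit that $|\nabla u_i|^{h_i(x)}\,dx$ has no atoms, so the two contributions are mutually singular and can be summed.
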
	
\textbf{\textbf{Notations.}} Weak (resp. strong) convergence will be denoted by
$\rightharpoonup$
(resp., $\rightarrow$), 
$C_i$, $C_{i_j}$, $c_j$ and $c_{i_j}$ will denote positive constants which may vary from line to line and can be determined in concrete conditions.  Here, $X^{\ast}$  denotes the dual space of $X$, $\delta_{x_j}$ is the Dirac mass at  $x_j$, for all $\rho>0, x\in \Omega$, where $B(x,\rho)$  denotes the ball of radius $\rho$ centered at $x$.	
	\section{The auxiliary problem and variational framework}\label{sec auxiliary}		
	In order to prove Theorem \ref{thm.result}, we shall  introduce the auxiliary problem by defining the auxiliary functional $E_{\theta,\lambda}$ and  showing that the energy functional $E_{\theta,\lambda}$ has the geometry of Mountain  Pass Theorem \ref{PMT}.\\	
	By assumption $(\textbf{\textit{M}})$, we see that the functions $M_i$ are bounded only from below and do not give us enough information about the behavior of $M_i$ at infinity, which makes it difficult to prove that the functional $E_{\lambda}$ has the geometry of Mountain  Pass Theorem and that the sequence of Palais-Smale is bounded in $X$.  Hence,  we truncate functions $M_i$ and study the associated truncated problem.\\	
	 Take  $\gamma_i$ as in assumption ($\textbf{\textit{F}}_3$) and $\theta_i \in \mathbb{R},$ for all $i\in \{1,\dots,N\}$ such that $\mathfrak{M}_i^0 < \theta_i < \frac{\gamma_i\mathfrak{M}_i^0}{p_i^{+}\max\{\beta_{1_i},\beta_{2_i}\}}$. 
	 By assumption $(\textbf{\textit{M}}),$ there exists $\tau_i^0>0$ such that $M_i(\tau_i^0)=\theta_i$. Thus, by setting	
	\[
	M_{\theta_i}(\tau_i)=\begin{cases}
		M_i(\tau_i) &\text{for } 0\leq \tau_i \leq \tau_i^0, \\
		\theta_i &\text{for } \tau_i\geq \tau_i^0,
	\end{cases}
	\]	
 we can introduce the following auxiliary problem	
	\begin{eqnarray}
		\label{s.aux1.1}
		\begin{cases}
			M_{\theta_i}\Big(\mathcal{A}_i(u_i)\Big)
			\Big(-\textrm{div}\,\big(\mathcal{B}_{1_i}(\nabla u_i)\big)+  \mathcal{B}_{2_i}(u_i)\Big)=|u_i|^{s_i (x)-2}u_i+\lambda F_{u_i}(x,u) & \text{in }\Omega, \\
			\mathfrak{N}.M_{\theta_i}\Big(\mathcal{A}_i(u_i)\Big)\mathcal{B}_{1_i}(\nabla u_i)=|u_i|^{\ell_i (x)-2}u_i & \text{on }\partial\Omega ;
		\end{cases}
	\end{eqnarray}	
	for ($ 1\leq i\leq n$), where $\mathcal{A}_i, \mathcal{B}_i, F_{u_i}$ and $\lambda$ are as in Section \ref{s1}. By assumption $(\textbf{\textit{M}})$, we  also know that
	\begin{equation} \label{hypM}
	\mathfrak{M}_i^0 \leq M_{\theta_i}(\tau_i)\leq  \theta_i < \frac{\gamma_i\mathfrak{M}_i^0}{p_i^{+}\max\{\beta_{1_i},\beta_{2_i}\}}, \ \hbox{for all} \ \tau_i\geq 0 ,  1\leq i\leq n.
	\end{equation}	
	Now, the next step is to prove that the auxiliary problem \eqref{s.aux1.1} has a nontrivial weak solution. We obtain the following result.	
	\begin{theorem} \label{AuxTh}
		Suppose that     conditions 
		 $(\textbf{\textit{A}}_1)-(\textbf{\textit{A}}_4)$, $(\textbf{\textit{M}}),$ and $(\textbf{\textit{F}}_1)-(\textbf{\textit{F}}_4)$ hold. Then there exists a constant $\lambda_{\star} >0$, such that if $\lambda\geq \lambda_{\star}$, then problem \eqref{s.aux1.1} has at least one nontrivial solution in $X$.		
	\end{theorem}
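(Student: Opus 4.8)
The natural approach is to obtain the solution as a mountain pass critical point of the $C^{1}$ energy functional $E_{\theta,\lambda}(u)=\Phi(u)-\Psi(u)-\Upsilon(u)-\mathcal F_{\lambda}(u)$ attached to the truncated system \eqref{s.aux1.1}, where $\Phi(u)=\sum_{i=1}^{n}\widehat{M_{\theta_i}}(\mathcal A_i(u_i))$ with $\widehat{M_{\theta_i}}(\tau)=\int_{0}^{\tau}M_{\theta_i}(s)\,ds$; its critical points are the weak solutions of \eqref{s.aux1.1}, and $E_{\theta,\lambda}(0_X)=0$ by $(\textbf{\textit{F}}_1)$. To verify hypothesis $(H_1)$ of Theorem \ref{PMT} I would use $(\textbf{\textit{A}}_1)-(\textbf{\textit{A}}_3)$ and the truncation bound \eqref{hypM} to get, for $\|u\|$ small, $\Phi(u)\ge c\sum_i\bigl(\rho_{1,p_i}(u_i)+\mathcal K(k_i^3)\rho_{1,q_i}(u_i)\bigr)\ge C_1\|u\|^{h^{+}}$ via \eqref{L24}, while the Sobolev and trace embeddings $X_i\hookrightarrow L^{s_i(x)}(\Omega)$ and $X_i\hookrightarrow L^{\ell_i(x)}(\partial\Omega)$ (Propositions \ref{prop23}, \ref{prop24}) together with $(\textbf{\textit{F}}_4)$ bound $\Psi(u)+\Upsilon(u)+\mathcal F_{\lambda}(u)\le C_2\|u\|^{\min\{s^{-},\ell^{-}\}}+\lambda C_3\|u\|^{w^{-}}$; since $h_i^{+}<w_i^{-}\le\min\{s_i^{-},\ell_i^{-}\}$, all subtracted terms are of strictly higher order, so $E_{\theta,\lambda}(u)\ge\mathcal R_{\lambda}>0$ on a sphere $\|u\|=\rho_{\lambda}$. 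For $(H_2)$, fix $v\in X\setminus\{0\}$ with $v_i$ large on a fixed ball (so that $(\textbf{\textit{F}}_3)$ applies to $tv$ for $t$ large) and let $t\to+\infty$: the upper growth in $(\textbf{\textit{A}}_2)$ gives $\Phi(tv)\le Ct^{q^{+}}+C$, whereas $\Psi(tv)\ge c\,t^{s^{-}}$ and $|\mathcal F_{\lambda}(tv)|\le\lambda C\,t^{w^{+}}$ by $(\textbf{\textit{F}}_4)$; since $q^{+}<w^{+}<s^{-}$, one gets $E_{\theta,\lambda}(tv)\to-\infty$, hence $z=t_0v$ with $\|z\|>\rho_{\lambda}$ and $E_{\theta,\lambda}(z)<0$.

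The heart of the argument is a \emph{local} Palais--Smale condition: there is a threshold $c^{\ast}>0$ depending only on the geometric constants $S_i$ of Theorem \ref{ccpp} and $\overline{T_i}_{x_j}$ of Theorem \ref{ccp}, on $\mathfrak M_i^{0}$, and on the exponents, such that $E_{\theta,\lambda}$ satisfies $(PS)_c$ for every $c<c^{\ast}$. First, any $(PS)_c$ sequence $\{u_m\}$ is bounded: computing $E_{\theta,\lambda}(u_m)-\sum_i\frac1{\gamma_i}\langle(E_{\theta,\lambda}'(u_m))_i,u_{i_m}\rangle$ and using $(\textbf{\textit{A}}_4)$ in the form $\mathcal A_i(u_i)\ge\bigl(p_i^{+}\max\{\beta_{1_i},\beta_{2_i}\}\bigr)^{-1}\langle\mathcal A_i'(u_i),u_i\rangle$ together with $M_{\theta_i}\le\theta_i<\gamma_i\mathfrak M_i^{0}\bigl(p_i^{+}\max\{\beta_{1_i},\beta_{2_i}\}\bigr)^{-1}$, the Kirchhoff part is bounded below by $\kappa_i\mathcal A_i(u_{i_m})$ with $\kappa_i>0$, the two critical parts are nonnegative because $\gamma_i<\min\{s_i^{-},\ell_i^{-}\}$, and the perturbation part is bounded below by $(\textbf{\textit{F}}_2)-(\textbf{\textit{F}}_4)$; since $p_i^{-}>1$, this forces $\sup_m\|u_m\|<\infty$. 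Passing to a subsequence, $u_{i_m}\rightharpoonup u_i$ in $W^{1,h_i(x)}(\Omega)$, and Theorems \ref{ccp} and \ref{ccpp} produce at most countably many concentration points on $\mathcal C_{h_i}^{1}\subset\partial\Omega$ and on $\mathcal C_{h_i}^{2}\subset\overline\Omega$ with masses $\mu_{i_j},\nu_{i_j}$ obeying $\overline{T_i}_{x_j}\nu_{i_j}^{1/h_i^{\partial}(x_j)}\le\mu_{i_j}^{1/h_i(x_j)}$ and $S_i\nu_{i_j}^{1/h_i^{\ast}(x_j)}\le\mu_{i_j}^{1/h_i(x_j)}$ respectively. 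Testing $E_{\theta,\lambda}'(u_m)$ against $\varphi_{\epsilon,j}u_{i_m}$ with $\varphi_{\epsilon,j}$ a standard cut-off at $x_j$, letting $m\to\infty$ and then $\epsilon\to0$ (the $\mathcal F_{\lambda}$-term vanishes in the limit by $(\textbf{\textit{F}}_2)$ and the compactness of the subcritical embeddings), and using $M_{\theta_i}(\mathcal A_i(\cdot))\ge\mathfrak M_i^{0}$, one gets at each concentration point the alternative $\nu_{i_j}=0$ or $\nu_{i_j}\ge\eta_i>0$, where $\eta_i$ is an explicit power of $\mathfrak M_i^{0}S_i^{\,h_i(x_j)}$ (resp. of $\mathfrak M_i^{0}\overline{T_i}_{x_j}^{\,h_i(x_j)}$); since the weak limit $u$ is itself a weak solution of \eqref{s.aux1.1}, its energy is controlled from below by the same splitting, and feeding a surviving $\nu_{i_j}$ into $c=E_{\theta,\lambda}(u)+\sum_i\bigl(\tfrac1{\gamma_i}-\tfrac1{s_i^{-}}\bigr)\nu_{i_j}+\cdots$ yields $c\ge c^{\ast}$, a contradiction. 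Hence no concentration occurs, so $u_{i_m}\to u_i$ strongly in $L^{s_i(x)}(\Omega)$ and in $L^{\ell_i(x)}(\partial\Omega)$, and then $\langle E_{\theta,\lambda}'(u_m),u_m-u\rangle\to0$ combined with $M_{\theta_i}(\mathcal A_i(u_{i_m}))\to t_i\ge\mathfrak M_i^{0}>0$ gives $\langle\mathcal A_i'(u_{i_m}),u_{i_m}-u_i\rangle\to0$, whence $u_m\to u$ in $X$ by the $(S_{+})$-property of $\mathcal A_i'$ supplied by $(\textbf{\textit{A}}_3)$.

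It remains to place the mountain pass level below $c^{\ast}$, and here one uses the second truncation mentioned in the Introduction: since $(\textbf{\textit{M}})$ controls $M_i$ only near the origin, one cuts the critical and perturbation terms off $E_{\theta,\lambda}$ outside a small ball, obtaining a functional that agrees with $E_{\theta,\lambda}$ on $B_{\delta}$, still has the mountain pass geometry, and for $\delta$ small enough has all of its bounded $(PS)$ sequences trapped where the critical modulars are too small to sustain concentration, hence satisfies the \emph{global} $(PS)$ condition; exploiting $(\textbf{\textit{F}}_3)$, the perturbation $-\mathcal F_{\lambda}$ drives the mountain pass value $c_{\lambda}$ down as $\lambda\to+\infty$, so there is $\lambda_{\star}>0$ with $c_{\lambda}<c^{\ast}$ for all $\lambda\ge\lambda_{\star}$, the corresponding critical point $u_{\lambda}$ lies in $B_{\delta}$ — so it solves \eqref{s.aux1.1} — and is nontrivial because $E_{\theta,\lambda}(u_{\lambda})=c_{\lambda}\ge\mathcal R_{\lambda}>0=E_{\theta,\lambda}(0_X)$. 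The two genuinely delicate points, both forced by the loss of compactness at $h_i^{\ast}$ and $h_i^{\partial}$, are: running the concentration--compactness exclusion \emph{simultaneously} at the interior critical exponent $s_i$ on $\mathcal C_{h_i}^{2}$ and at the boundary critical exponent $\ell_i$ on $\mathcal C_{h_i}^{1}$ while tracking the Kirchhoff factor $M_{\theta_i}$; and exhibiting a path whose maximal energy stays strictly under $c^{\ast}$, which is precisely where the largeness of $\lambda$ is needed.
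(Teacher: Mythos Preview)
Your overall architecture---mountain pass geometry, boundedness of $(PS)$ sequences via $(\textbf{\textit{A}}_4)$ and the truncation bound \eqref{hypM}, concentration--compactness at the two critical sets $\mathcal C_{h_i}^{1}$ and $\mathcal C_{h_i}^{2}$, and exclusion of concentration when $c_{\theta,\lambda}$ is below an explicit threshold $c^{\ast}$---matches the paper and is correct. Two points deserve correction.

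First, the ``second truncation'' you invoke in the last paragraph is a misreading of the Introduction and is not used in the paper. Once $M_i$ has been replaced by $M_{\theta_i}$, the auxiliary functional $E_{\theta,\lambda}$ already has all the control one needs: $(\textbf{\textit{M}})$ and the cap $M_{\theta_i}\le\theta_i$ hold \emph{globally}, not only near the origin. The paper's route to $c_{\theta,\lambda}<c^{\ast}$ is direct (Lemma \ref{lemma34}): fixing the nonnegative test direction $z$ from Lemma \ref{lemma33}, one shows that the maximizer $\delta_{\lambda}$ of $\delta\mapsto E_{\theta,\lambda}(\delta z)$ satisfies $\delta_{\lambda}\to0$ as $\lambda\to\infty$ (using $(\textbf{\textit{F}}_3)$ and $(\textbf{\textit{A}}_2)$), hence $c_{\theta,\lambda}\le\widehat{M_{\theta_i}}(\mathcal A_i(\delta_{\lambda}z_i))\to0$. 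Your proposed truncation of the critical and perturbation terms outside a ball $B_{\delta}$ is problematic as stated: it is precisely the critical term $\Psi$ that forces $E_{\theta,\lambda}(tv)\to-\infty$ in $(H_2)$, so after cutting it off the mountain pass geometry need not survive, and the claim that bounded $(PS)$ sequences are ``trapped'' in $B_{\delta}$ is unjustified.

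Second, in your concentration--compactness step you write ``since the weak limit $u$ is itself a weak solution of \eqref{s.aux1.1}''. This is premature: at that stage $u$ is only a weak limit, and one cannot use $E_{\theta,\lambda}'(u)=0$. The paper (and the standard argument) derives the lower bound $c_{\theta,\lambda}\ge\bigl(\tfrac{1}{\gamma_i}-\tfrac{1}{\ell_i^{-}}\bigr)\nu_{i_j}$ (resp.\ with $s_i^{-}$) directly from the $(PS)$ sequence, by expanding $E_{\theta,\lambda}(u_m)-\sum_i\gamma_i^{-1}\langle E_{\theta,\lambda}'(u_m),u_{i_m}\rangle$ and passing to the limit in the measure sense; no information about $u$ beyond weak convergence is used. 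With these two adjustments your sketch coincides with the paper's proof.
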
		
	For the proof of Theorem \ref{AuxTh} we shall need some technical results. We observe that the auxiliary problem \eqref{s.aux1.1} has a variational structure,
	indeed it is the Euler-Lagrange equation of the functional $E_{\theta,\lambda }: X\to \mathbb{R}$ defined as follows	
	\begin{multline*}
		E_{\theta,\lambda }(u)=
		\sum_{i=1}^{n}\widehat{M_{\theta_i}}\left(
		\mathcal{A}_i(u_{i})\right)
		-	\sum_{i=1}^{n}\displaystyle\int_{\Omega  }\dfrac{1}{s_i(x)}|u_i|^{s_i(x)}dx
		-\sum_{i=1}^{n}\displaystyle\int_{\partial\Omega  }\dfrac{1}{\ell_i(x)}|u_i|^{\ell_i(x)}d\sigma_x
		-\int_{\Omega}\lambda F(x,u)dx,
	\end{multline*}
where 
$
\widehat{M}_{\theta_i}(\tau)= \int_{0}^{\tau}M_{\theta_i}(s)ds.
$
Moreover, the functional $E_{\theta,\lambda }$ is Fr\'echet differentiable in $u\in X$ and for all $v=(v_1,\dots,v_n)\in X,$
\begin{equation}\label{derivata}
	\begin{alignedat}2
	\langle E_{\theta,\lambda }'(u),v\rangle &= \sum_{i=1}^{n} M_{\theta_i}\left(
		\mathcal{A}_i(u_{i})\right)\int_{\Omega  } \Big(
		\mathcal{B}_{1_i}(\nabla u_{i}) \nabla v_i  + 	\mathcal{B}_{2_i}(u_{i})v_i 
		\Big) \,dx - \sum_{i=1}^{n}\int_{ \Omega  }|u_{i}|^{s_i(x)-2}u_{i} v_i\,dx \\
		&\quad -\sum_{i=1}^{n}\int_{\partial \Omega  }|u_{i}|^{s_i(x)-2}u_{i} v_i\,d\sigma_x 	- \sum_{i=1}^{n}\displaystyle\int_{\Omega  }\lambda F_{u_i}(x,u)v_i\,dx.
	\end{alignedat}
\end{equation}	
	Now we prove that the functional $E_{\theta,\lambda }$ has the geometric features required by Mountain  Pass Theorem \ref{PMT}.	
	\begin{lemma} \label{lemma32}
		Suppose that conditions  $ (\textbf{\textit{A}}_1)-(\textbf{\textit{A}}_4)$, 
		$(\textbf{\textit{M}})$ and
		$(\textbf{\textit{F}}_1)-(\textbf{\textit{F}}_4)$ hold. Then there exist  positive constants $ \mathcal{R}$ and $\rho$ such that 
		$E_{\theta,\lambda }(u)\geq \mathcal{R}>0,$
		for all $u\in X$ with  $\left\| u\right\|=\rho$.
	\end{lemma}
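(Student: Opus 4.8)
The plan is to show that for $u\in X$ with $\|u\|=\rho$ and $\rho$ small, each of the three negative terms in $E_{\theta,\lambda}$ is dominated by the leading term coming from $\widehat{M_{\theta_i}}(\mathcal{A}_i(u_i))$. First I would obtain a lower bound for $\sum_i\widehat{M_{\theta_i}}(\mathcal{A}_i(u_i))$: by $(\textbf{\textit{M}})$ and the definition of $M_{\theta_i}$ we have $M_{\theta_i}(\tau)\geq \mathfrak{M}_i^0>0$, so $\widehat{M_{\theta_i}}(\tau)\geq \mathfrak{M}_i^0\,\tau$; then using $(\textbf{\textit{A}}_2)$ (the lower bound $a_{j_i}(\xi)\geq k_{j_i}^0+\mathcal{K}(k_i^3)k_{j_i}^2\xi^{(q_i-p_i)/p_i}$) together with the definition of $\mathcal{A}_i$ and $A_{j_i}(t)=\int_0^t a_{j_i}$, one gets $\mathcal{A}_i(u_i)\gtrsim \int_\Omega \tfrac{1}{p_i(x)}\big(|\nabla u_i|^{p_i(x)}+|u_i|^{p_i(x)}\big)+\mathcal{K}(k_i^3)\int_\Omega\tfrac{1}{q_i(x)}\big(|\nabla u_i|^{q_i(x)}+|u_i|^{q_i(x)}\big)$, i.e. a constant times the sum of the modulars $\rho_{1,p_i}(u_i)+\mathcal{K}(k_i^3)\rho_{1,q_i}(u_i)$. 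Taking $\rho<1$ so that $\|u_i\|_i<1$, the modular inequalities \eqref{L24} (and Proposition \ref{prop22}) give $\rho_{1,p_i}(u_i)\geq \|u_i\|_{1,p_i(x)}^{p_i^+}$ and similarly for the $q_i$-part, hence $\Phi$-type term $\geq c\sum_i \|u_i\|_i^{\,\gamma}$ for an appropriate exponent; it is cleanest to bound below by $c\,\|u\|^{\,\beta}$ with $\beta=\max_i\{q_i^+\}$ (the worst exponent), still recalling $\|u\|<1$.

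Next I would estimate the subtracted terms from above. For $\Psi(u)=\sum_i\int_\Omega \tfrac{1}{s_i(x)}|u_i|^{s_i(x)}$, use the continuous embedding $W^{1,h_i(x)}(\Omega)\hookrightarrow L^{s_i(x)}(\Omega)$ (Proposition \ref{prop23}, valid since $s_i(x)\leq h_i^\ast(x)$) to get $\int_\Omega|u_i|^{s_i(x)}\leq \max\{\|u_i\|_{L^{s_i(x)}}^{s_i^-},\|u_i\|_{L^{s_i(x)}}^{s_i^+}\}\leq C\|u_i\|_i^{\,s_i^-}$ once $\|u_i\|_i$ is small; similarly $\Upsilon(u)\leq C\|u_i\|_i^{\,\ell_i^-}$ via the trace embedding $W^{1,h_i(x)}(\Omega)\hookrightarrow L^{\ell_i(x)}(\partial\Omega)$ (Proposition \ref{prop24}). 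For the potential term, $(\textbf{\textit{F}}_4)$ gives $|F(x,u)|\leq c\sum_i|u_i|^{w_i(x)}$, so $\mathcal{F}_\lambda(u)\leq \lambda c\sum_i\int_\Omega|u_i|^{w_i(x)}\leq \lambda C\sum_i\|u_i\|_i^{\,w_i^-}$, again using $W^{1,h_i(x)}(\Omega)\hookrightarrow L^{w_i(x)}(\Omega)$ and smallness. Collecting everything, for $\|u\|=\rho<1$,
\[
E_{\theta,\lambda}(u)\ \geq\ c_0\,\rho^{\,\beta}\ -\ C_1\rho^{\,\sigma_s}\ -\ C_2\rho^{\,\sigma_\ell}\ -\ \lambda C_3\rho^{\,\sigma_w},
\]
where $\beta=\max_i q_i^+$ and $\sigma_s=\min_i s_i^-$, $\sigma_\ell=\min_i\ell_i^-$, $\sigma_w=\min_i w_i^-$. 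By \eqref{e1.1}-type hypotheses, namely $q_i^+<w_i^-$ and $q_i^+<h_i^+<\gamma_i<s_i^-$ together with $\gamma_i<\ell_i^-$ from $(\textbf{\textit{F}}_3)$–$(\textbf{\textit{F}}_4)$, all three exponents $\sigma_s,\sigma_\ell,\sigma_w$ strictly exceed $\beta$, so the right-hand side equals $\rho^{\beta}\big(c_0-C_1\rho^{\sigma_s-\beta}-C_2\rho^{\sigma_\ell-\beta}-\lambda C_3\rho^{\sigma_w-\beta}\big)$, which is strictly positive for $\rho$ sufficiently small (depending on $\lambda$). Fixing such a $\rho$ and setting $\mathcal{R}:=\tfrac12 c_0\rho^{\beta}>0$ finishes the proof.

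The only delicate point is bookkeeping the exponents: one must be careful to pick, on the lower-bound side, the \emph{largest} relevant exponent $\beta$ (so that for $\rho<1$ the bound $\rho_{1,p_i}(u_i)\geq\|u_i\|^{p_i^+}$ etc. goes the right way and $c_0\rho^\beta$ is the honest minimum over the $n$ components and over the $p$- and $q$-pieces), and on the upper-bound side the \emph{smallest} exponents $s_i^-,\ell_i^-,w_i^-$. The chain of inequalities $q_i^+<w_i^-$, $h_i^+<\gamma_i<s_i^-$, $\gamma_i<\ell_i^-$ and $q_i^+\leq h_i^+$ guarantees $\beta<\min\{\sigma_s,\sigma_\ell,\sigma_w\}$, which is exactly what is needed. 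I would also note that the constants $C_1,C_2,C_3$ are independent of $\lambda$, so that although the admissible $\rho$ shrinks as $\lambda\to\infty$, for each fixed $\lambda$ the conclusion holds; this $\lambda$-dependence of $\rho$ is harmless here and in fact consistent with the asymptotic statement $\|u_\lambda\|\to 0$ in Theorem \ref{thm.result}.
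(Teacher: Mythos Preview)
Your proposal is correct and follows essentially the same route as the paper's proof: bound $\widehat{M_{\theta_i}}(\mathcal{A}_i(u_i))$ below via $(\textbf{\textit{M}})$ and $(\textbf{\textit{A}}_2)$, bound $\Psi,\Upsilon,\mathcal{F}_\lambda$ above via the Sobolev and trace embeddings together with $(\textbf{\textit{F}}_4)$, and then exploit the exponent gap $q_i^{+}<w_i^{-}<\inf\{s_i^{-},\ell_i^{-}\}$ to conclude. The only cosmetic difference is that the paper inserts $(\textbf{\textit{A}}_4)$ to pass from $A_{j_i}$ to $a_{j_i}(\xi)\xi$ before applying $(\textbf{\textit{A}}_2)$, whereas you integrate the lower bound in $(\textbf{\textit{A}}_2)$ directly; and the paper keeps the final estimate componentwise in $i$ rather than passing to the global exponents $\beta=\max_i q_i^{+}$, $\sigma_s=\min_i s_i^{-}$, etc., but both versions yield the same conclusion.
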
	
	\begin{proof}		
		For all $u=(u_1,\dots,u_n)\in X$, we obtain
		under the assumptions $ (\textbf{\textit{A}}_2)$ and $(\textbf{\textit{A}}_4)$, 		
		\begin{center}
			$\displaystyle \begin{aligned} \sum_{i=1}^n\widehat{M}_{\theta_i}\Big(\mathcal{A}_i(u_i)\Big)& \geq \sum_{i=1}^n \frac{\mathfrak{M}_{i}^0}{p_i^+} \int_{\Omega} \Big(A_{1_i}(|\nabla u_i|^{p_i(x)}) + A_{2_i}(|u_i|^{p_i(x)}) \Big) dx \\
				& \geq \sum_{i=1}^n \frac{\mathfrak{M}_{i}^0}{p_i^+} \int_{\mathbb R^N} \Big[\frac{1}{\beta_{1_i}} a_{1_i}(|\nabla u_i|^{p_i(x)})|\nabla u_i|^{p_i(x)} + \frac{1}{\beta_{2_i}} a_{2_i}(|u_i|^{p_i(x)})|u_i|^{p_i(x)} \Big] dx
				\\ & \geq \sum_{i=1}^n  \frac{\mathfrak{M}_{i}}{p_i^+\max\{\beta_{1_i}, \beta_{2_i}\}} \int_{\mathbb R^N} \Big[a_{1_i}(|\nabla u_i|^{p_i(x)})|\nabla u_i|^{p_i(x)} + a_{2_i}(|u_i|^{p_i(x)})|u_i|^{p_i(x)} \Big] dx \\
				& \geq \sum_{i=1}^n  \frac{\mathfrak{M}_{i}^0\min\{\min\{k_{1_i}^0,k_{2_i}^0\}, \min\{k_{1_i}^2,k_{2_i}^2\}\}}{p_i^+\max\{\beta_{1_i}, \beta_{2_i}\}}  \Big(\rho_{1,p_i(x)}(u_i) + \mathcal{K}(k_{i}^3) \rho_{1,q_i(x)}(u_i) \Big).
			\end{aligned}$
		\end{center}	
		Hence, by using assumption $(\textbf{\textit{F}}_4)$, and relations \eqref{L22}-\eqref{L23}, we have
			\begin{center}
			$\displaystyle \begin{aligned} 	E_{\theta,\lambda }(u) & \geq \sum_{i=1}^n \frac{\mathfrak{M}_{i}^0\min\big\{\min\{k_{1_i}^0,k_{2_i}^0\}, \min\{k_{1_i}^2,k_{2_i}^2\}\big\}}{p_i^+\max\big\{\beta_{1_i}, \beta_{2_i}\big\}}  \Bigg(\min\bigg\{\| u_i\|_{1,p_i(x)}^{p_i^-} , \| u_i\|_{1,p_i(x)}^{p_i^+}\bigg\}\\
			&	+ \mathcal{K}(k_{i}^3)  \min\bigg\{\| u_i\|_{1,q_i(x)}^{q_i^-}, \| u_i\|_{1,q_i(x)}^{q_i^+}\bigg\} \Bigg)
				- \sum_{i=1}^{n}\frac{1}{s_i^{-}} \max\bigg\{ \left\|  u_i\right\|_{L^{s_i(x)}(\Omega)}^{s_i^{-}}, \left\| u_i\right\|_{L^{s_i(x)}(\Omega)}^{s_i^{+}} \bigg\}\\
				&	- \sum_{i=1}^{n}\frac{1}{\ell_i^{-}}\max\bigg\{ \left\|  u_i\right\|_{L^{\ell_i(x)}(\partial\Omega)}^{\ell_i^{-}}, \left\| u_i\right\|_{L^{\ell_i(x)}(\partial \Omega)}^{\ell_i^{+}} \bigg\}
			  -\sum_{i=1}^{n}\lambda  \max\bigg\{ \left\|  u_i\right\|_{L^{w_i(x)}(\Omega)}^{w_i^{-}}, \left\| u_i\right\|_{L^{w_i(x)}(\Omega)}^{w_i^{+}} \bigg\}.
		 \end{aligned}$
		\end{center}	
	By the Sobolev Embedding Theorem and taking $0< \left\| u\right\| =
		\max_{1\leq i\leq n} \big\{\| u_i\|_{1,p_i(x)}+ \mathcal{K}(k_i^3)\|  u_i\|_{1,q_i(x)} \}= \rho <1$, there exist positive constants $c_{1_i}, c_{2_i}, c_{3i}, c_{4i}$ and $c_{5i}$, such that	
		\begin{align*}
			E_{\theta,\lambda }(u) &\geq 	\sum_{i=1}^{n}c_{1_i}\Big(\left\|  u_i\right\|_{1,p_i(x)}^{q^{+}} +\mathcal{K}(k_i^3)  \left\| u_i\right\|_{1,q_i(x)}^{q^{+}} \Big)	- \sum_{i=1}^{n}c_{2_i}\left\|  u_i\right\|_{i}^{s_i^{-}}  	- \sum_{i=1}^{n}c_{3i}\left\|  u_i\right\|_{i}^{\ell_i^{-}}
			-\sum_{i=1}^{n}c_{4i}  \Vert  u_i \Vert^{w_i^-}_{i}\\
			&\geq 	\sum_{i=1}^{n}\Bigl(c_{5i}\left\|  u_i\right\|_{i}^{q_i^{+}}
			-c_{2_i}\left\|  u_i\right\|_{i}^{s_i^{-}}
			-c_{3i}\left\|  u_i\right\|_{i}^{\ell_i^{-}} - c_{4i} \Vert  u_i \Vert^{w_i^-}_{i}
			\Bigr).
		\end{align*}
	Since $q_i^{+}< w_i^{- }< \inf\{s_i^{-},\ell_i^{-}\}$,  there exists a positive constant $\mathcal{R}$ such that
			$
	E_{\theta,\lambda }(u)\geq \mathcal{R}>0, \ \text{ with } \left\| u\right\| =\rho.
		$
	\end{proof} 	
	\begin{lemma} \label{lemma33}
		Suppose that conditions   $(\textbf{\textit{A}}_1)-(\textbf{\textit{A}}_4)$, $(\textbf{\textit{M}}),$ and $(\textbf{\textit{F}}_4)$ hold. Then  for every positive function $\lambda$, there exists a nonnegative function $e\in X$, independent of $\lambda$, such that  $\left\|z\right\|>R$ and $E_{\theta,\lambda }(z)<0$.	
	\end{lemma}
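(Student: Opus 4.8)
The plan is to drive the auxiliary energy $E_{\theta,\lambda}$ to $-\infty$ along a fixed ray, the destabilizing mechanism being the critical term $\int_\Omega \frac{1}{s_1(x)}|u_1|^{s_1(x)}\,dx$, whose exponent strictly dominates every competing power. First I would fix, once and for all and independently of $\lambda$, a nonnegative $e_1 \in C_0^\infty(\Omega)$ with $e_1 \not\equiv 0$, and set $e := (e_1, 0, \dots, 0) \in X$. Since $A_{j_i}(0) = 0$ forces $\mathcal{A}_i(0) = 0$ and hence $\widehat{M}_{\theta_i}(0) = 0$, and since $e_1$ is compactly supported in $\Omega$ so that $\Upsilon(te) = 0$, only the index $i = 1$ contributes, and for $t > 1$,
\[
E_{\theta,\lambda}(te) = \widehat{M}_{\theta_1}\!\big(\mathcal{A}_1(te_1)\big) - \int_\Omega \frac{1}{s_1(x)}|te_1|^{s_1(x)}\,dx - \lambda\int_\Omega F(x, te_1, 0, \dots, 0)\,dx.
\]

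The second step is to estimate the three pieces. From \eqref{hypM} we have $\widehat{M}_{\theta_1}(\tau) \le \theta_1\tau$, while $(\textbf{\textit{A}}_2)$ gives $A_{j_1}(\xi) \le k^1_{j_1}\xi + k^3_1\frac{p_1(x)}{q_1(x)}\xi^{q_1(x)/p_1(x)}$; substituting $\xi = |\nabla(te_1)|^{p_1(x)}$ and $\xi = |te_1|^{p_1(x)}$ and integrating yields $\widehat{M}_{\theta_1}(\mathcal{A}_1(te_1)) \le C_1\big(t^{p_1^+} + \mathcal{K}(k^3_1)t^{q_1^+}\big)$ for $t > 1$, which is finite because $e_1 \in X_1$. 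The critical term is bounded below by $\frac{t^{s_1^-}}{s_1^+}\int_\Omega |e_1|^{s_1(x)}\,dx =: c_1 t^{s_1^-}$ with $c_1 > 0$ since $e_1 \not\equiv 0$. Finally $(\textbf{\textit{F}}_4)$ gives $\big|\lambda\int_\Omega F(x, te_1, 0, \dots, 0)\,dx\big| \le \lambda c\int_\Omega |te_1|^{w_1(x)}\,dx \le \lambda C\, t^{w_1^+}$ for $t > 1$. Collecting these bounds,
\[
E_{\theta,\lambda}(te) \le C_1\big(t^{p_1^+} + \mathcal{K}(k^3_1)t^{q_1^+}\big) - c_1 t^{s_1^-} + \lambda C\, t^{w_1^+}, \qquad t > 1.
\]

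Because the standing exponent hypotheses together with $(\textbf{\textit{F}}_4)$ give $p_1^+ < q_1^+ < w_1^+ < s_1^-$, the negative term $-c_1 t^{s_1^-}$ has strictly the largest exponent on the right-hand side, so $E_{\theta,\lambda}(te) \to -\infty$ as $t \to +\infty$. I would then fix $t_0 = t_0(\lambda) > \max\{1,\, R/\|e_1\|_1\}$ large enough that the displayed upper bound is negative and take $z := t_0\, e$; this $z$ is nonnegative, satisfies $\|z\| = t_0\|e_1\|_1 > R$, and $E_{\theta,\lambda}(z) < 0$, as required, with the profile $e$ chosen independently of $\lambda$. I do not expect a genuine obstacle in this argument: the only delicate point is the bookkeeping of exponents, namely checking that the truncated Kirchhoff contribution (at most a polynomial of degree $q_1^+$) and the $\lambda$-weighted potential contribution (of degree $w_1^+$) are both strictly dominated by the critical term; restricting to a single nonzero component makes this comparison completely transparent by keeping every exponent tied to the fixed index $i = 1$.
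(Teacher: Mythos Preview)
Your argument is correct and follows essentially the same route as the paper: bound $\widehat{M}_{\theta_1}(\mathcal{A}_1(te_1))$ above by a polynomial of degree at most $q_1^+$ via the truncation and $(\textbf{\textit{A}}_2)$, control the potential contribution through $(\textbf{\textit{F}}_4)$, and let the critical term $-c_1 t^{s_1^-}$ (whose exponent strictly exceeds $q_1^+$ and $w_1^+$) drive the energy to $-\infty$ along the ray $t\mapsto te$. The paper does the same computation with a generic $e=(e_1,\dots,e_n)\in X\setminus\{0\}$ and keeps the boundary term $\Upsilon$, whereas you streamline by taking $e=(e_1,0,\dots,0)$ with $e_1\in C_0^\infty(\Omega)$, which kills $\Upsilon$ and reduces all bookkeeping to the single index $i=1$; your treatment of the $F$ term (bounding it by $+\lambda C t^{w_1^+}$ rather than dropping it with a sign assumption) is in fact cleaner than the paper's. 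Both arguments yield a fixed direction $e$ independent of $\lambda$ along which the energy tends to $-\infty$, which is exactly what the subsequent Lemma~\ref{lemma34} needs.
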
	
	\begin{proof} 
		By  the assumptions $(\textbf{\textit{A}}_1)$ and $(\textbf{\textit{A}}_4)$, for all $\delta >0$ and $u\in X$, we have		
		\begin{align*}
			\sum_{i=1}^{n}\widehat{M_{\theta_i}}\left(\mathcal{A}_i(\delta u_i)\right)&=\sum_{i=1}^{n} \int_{0}^{\mathcal{A}_i(\delta u_i)}M_{\theta,i}\left(s\right)ds
			\leq  \sum_{i=1}^{n}\theta_i \mathcal{A}_i(\delta u_i)\\
			&\leq  \sum_{i=1}^{n}\theta_i  \int_{\Omega }\dfrac{1}{p_i(x)}\left(A_{1_i}(|\nabla (\delta u_i)|^{p_i(x)} + A_{2_i}(|\delta u_i|^{p_i(x)})\right)dx\\
			&\leq \sum_{i=1}^{n}
			\theta_i\displaystyle \int_{\Omega }\Big(\dfrac{\max \{k_{1_i}^1,k_{2_i}^1\}}{p_i(x)}\big(|\nabla(\delta  u_i)|^{p_i(x)}
			+|\delta  u_i|^{p_i(x)} \big)
			+
			\dfrac{ k_{i}^3}{q_i(x)} \big(|\nabla(\delta  u_i)|^{q_i(x)} + |\delta  u_i|^{q_i(x)}\big)\Big)dx\\
				&\leq \sum_{i=1}^{n} \theta_i \Big(\dfrac{\max \{k_{1_i}^1,k_{2_i}^1\}}{p_i^-}\max\big\{\| u_i\|_{1,p_i(x)}^{p_i^-} , \| u_i\|_{1,p_i(x)}^{p_i^+}\big\}+ \dfrac{k_{i}^3}{q_i^+} \max\big\{\| u_i\|_{1,q_i(x)}^{q_i^-}, \| u_i\|_{1,q_i(x)}^{q_i^+}\big\} \Big).
		\end{align*} 		
		By this inequality and assumption $(\textbf{\textit{F}}_4)$, we have  for $e=(e_1,\dots,e_n) \in X \setminus \{(0,\dots,0)\big \}$ and each $\delta>1,$
		\begin{align*}
			E_{\theta, \lambda }(\delta e)&=\sum_{i=1}^{n}\widehat{M_i}\left(\mathcal{A}_i(\delta e_i)\right)-\sum_{i=1}^{n}\displaystyle\int_{\Omega  }\dfrac{1}{s_i(x)}|\delta e_i|^{s_i(x)}dx -\sum_{i=1}^{n}\displaystyle\int_{\partial \Omega  }\dfrac{1}{\ell_i(x)}|\delta e_i|^{\ell_i(x)}d\sigma_x- \lambda\int_{\Omega} F(x,\delta e)dx\\
			&\leq \sum_{i=1}^{n}\Bigg[ \theta_i \delta^{q_i^+}\Big(\dfrac{\max \{k_{1_i}^1,k_{2_i}^1\}}{p_i^-}\max\big\{\| e_i\|_{1,p_i(x)}^{p_i^-} , \| e_i\|_{1,p_i(x)}^{p_i^+}\big\}+ \dfrac{k_{i}^3}{q_i^+} \max\big\{\| e_i\|_{1,q_i(x)}^{q_i^-}, \| e_i\|_{1,q_i(x)}^{q_i^+}\big\} \Big)
			\\
				&\quad~~	
			-\frac{\delta^{s_i^-}}{s_i^-}\min\Big\{\| e_i\|_{s_i(x)}^{s_i^-}, \|e_i\|_{s_i(x)}^{s_i^+}\Big\}
			-\frac{\delta^{\ell_i^-}}{\ell_i^-}\min\Big\{\| e_i\|_{\ell_i(x)}^{\ell_i^-}, \|e_i\|_{\ell_i(x)}^{\ell_i^+}\Big\}
			-\delta^{w_i^-}\min\Big\{\| e_i\|_{w_i(x)}^{w_i^-}, \|e_i\|_{w_i(x)}^{w_i^+}\Big\}\Bigg],
		\end{align*}
		which tends to $-\infty$ as $\delta\to +\infty$ since $\min \{s_i^-,\ell_i^-\} >w_i^->q_i^+$. So, the lemma
		is proven by choosing $z=\delta_{*} e$  with $\delta_* >0$ sufficiently large.		
	\end{proof} 	
	Now, by Mountain  Pass Theorem \ref{PMT} without the Palais-Smale condition, we get a sequence $\{u_m\}_{m\in \mathbb{N}}\subset X$ such that	
	$E_{\theta,\lambda}(u_m) \to c_{\theta,\lambda} \text{ and } E_{\theta,\lambda}'(u_m) \to 0,$
	where 
	$
	c_{\theta,\lambda} := \inf_{\phi \in \Gamma} \max_{ \delta \in \left[ 0,1\right]  }E_\lambda(\phi(\delta)),
	$
	and 	
	\[
	\Gamma= \big \{ \phi : \left[ 0,1\right] \to X \  \text{continuous } : \phi(0)=(0,\dots,0), ~~E_{\theta,\lambda}(\phi (1))<0 \big\}.
	\]	
	\begin{lemma} \label{lemma34}		
		Suppose that conditions
		 $(\textbf{\textit{A}}_1)-(\textbf{\textit{A}}_2)$, $(\textbf{\textit{M}}),$ and $(\textbf{\textit{F}}_4)$ hold. Then 
		$ \lim_{\lambda \to +\infty } c_{\theta,\lambda}=0$.
	\end{lemma}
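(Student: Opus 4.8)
The plan is to bound the minimax value $c_{\theta,\lambda}$ from above by the maximum of $E_{\theta,\lambda}$ along one fixed, $\lambda$-independent ray $t\mapsto te$, and then to squeeze that maximum to $0$ as $\lambda\to+\infty$. Write $\Phi_\theta(u):=\sum_{i=1}^{n}\widehat{M_{\theta_i}}\big(\mathcal{A}_i(u_i)\big)$, so that $E_{\theta,\lambda}=\Phi_\theta-\Psi-\Upsilon-\mathcal{F}_\lambda$. First I would fix a nonnegative $e=(e_1,\dots,e_n)\in X$ with each $e_i$ bounded and $e_i\not\equiv 0$, chosen — and this is where $(\textbf{\textit{F}}_3)$ enters, since it forces $F>0$ once all its arguments are large — so that $\mathrm{supp}\,e$ lies in the set where $F$ is positive and $\int_\Omega F(x,te)\,dx\geq0$ for every $t\geq0$, with strict inequality for $t>0$. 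Set $h_\lambda(t):=E_{\theta,\lambda}(te)=\Phi_\theta(te)-\Psi(te)-\Upsilon(te)-\lambda\int_\Omega F(x,te)\,dx$, $t\geq0$. Since the last term is $\leq0$ by the choice of $e$, $h_\lambda(t)\leq\Phi_\theta(te)-\Psi(te)-\Upsilon(te)$; and, componentwise as in the proof of Lemma~\ref{lemma33}, for $t\geq1$ one has $\widehat{M_{\theta_i}}(\mathcal{A}_i(te_i))\leq C_i t^{q_i^+}$ while $\Psi(te)\geq\sum_i c_i t^{s_i^-}$, with $s_i^->q_i^+$, so $\Phi_\theta(te)-\Psi(te)-\Upsilon(te)\to-\infty$ as $t\to+\infty$. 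Hence there is $T_1>0$, independent of $\lambda$, with $h_\lambda(t)<0$ for $t\geq T_1$; then the reparametrized segment $t\mapsto(tT_1)e$, $t\in[0,1]$, lies in $\Gamma$, and
\[
0<c_{\theta,\lambda}\ \leq\ \max_{t\in[0,T_1]}h_\lambda(t),
\]
the left inequality being the Mountain Pass geometry of Lemma~\ref{lemma32}.

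I would then split this maximum at a small number $T_0\in(0,1)$, with $p^-:=\min_i p_i^-$. On $[0,T_0]$, using $(\textbf{\textit{M}})$ through $M_{\theta_i}\leq\theta_i$ together with $(\textbf{\textit{A}}_1)$–$(\textbf{\textit{A}}_2)$ gives $\widehat{M_{\theta_i}}(\mathcal{A}_i(te_i))\leq\theta_i\,\mathcal{A}_i(te_i)\leq C_i(t^{p_i^-}+t^{q_i^-})$, hence $\Phi_\theta(te)\leq C\,t^{p^-}$ with $C$ independent of $\lambda$; moreover $t\mapsto\Phi_\theta(te)$ is nondecreasing because $a_{j_i}\geq0$. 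Since $\Psi,\Upsilon\geq0$ and $\int_\Omega F(x,te)\,dx\geq0$,
\[
\max_{t\in[0,T_0]}h_\lambda(t)\ \leq\ \max_{t\in[0,T_0]}\Phi_\theta(te)\ =\ \Phi_\theta(T_0 e)\ \leq\ C\,T_0^{p^-}.
\]
On $[T_0,T_1]$, $h_\lambda(t)\leq\big(\Phi_\theta(te)-\Psi(te)-\Upsilon(te)\big)-\lambda\,\kappa_{T_0}$, where $\kappa_{T_0}:=\min_{t\in[T_0,T_1]}\int_\Omega F(x,te)\,dx>0$ (a continuous positive function on a compact interval) and $C_{T_0}:=\sup_{t\in[T_0,T_1]}\big(\Phi_\theta(te)-\Psi(te)-\Upsilon(te)\big)<\infty$ is $\lambda$-free; so $\max_{t\in[T_0,T_1]}h_\lambda(t)\leq C_{T_0}-\lambda\kappa_{T_0}\leq C\,T_0^{p^-}$ once $\lambda\geq(C_{T_0}-C\,T_0^{p^-})_+/\kappa_{T_0}$. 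Combining the two ranges, $c_{\theta,\lambda}\leq C\,T_0^{p^-}$ for all large $\lambda$, hence $\limsup_{\lambda\to+\infty}c_{\theta,\lambda}\leq C\,T_0^{p^-}$; letting $T_0\to0^+$ and recalling $c_{\theta,\lambda}>0$ yields $\lim_{\lambda\to+\infty}c_{\theta,\lambda}=0$.

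The real obstacle is the first step: producing a fixed, $\lambda$-independent ray along which the potential term has the favourable sign, i.e. an $e$ with $\int_\Omega F(x,te)\,dx\geq0$ for all $t\geq0$. Under $(\textbf{\textit{F}}_4)$ alone $F$ may change sign near the origin, so one must lean on $(\textbf{\textit{F}}_3)$ to seat the bounded support of $e$ inside the region where $F$ is positive and to control the contribution of the part of that support where $te$ is still small; once this is arranged, verifying $\kappa_{T_0}>0$ and the $\lambda$-uniformity of $T_1$ and $C_{T_0}$ is routine, and all the remaining inequalities are exactly the $(\textbf{\textit{A}}_1)$–$(\textbf{\textit{A}}_2)$ and $(\textbf{\textit{M}})$ bookkeeping already done in Lemmas~\ref{lemma32}–\ref{lemma33}. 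A variant avoiding the splitting: the maximum of $h_\lambda$ on $[0,T_1]$ is attained at an interior point $t_\lambda$ with $h_\lambda'(t_\lambda)=0$, and then $\lambda\sum_i\int_\Omega F_{u_i}(x,t_\lambda e)e_i\,dx\leq\langle\Phi_\theta'(t_\lambda e),e\rangle$ stays bounded in $\lambda$ (by $(\textbf{\textit{A}}_2)$ and $M_{\theta_i}\leq\theta_i$), which forces $t_\lambda\to0$ and hence $c_{\theta,\lambda}\leq h_\lambda(t_\lambda)\leq\Phi_\theta(t_\lambda e)\to0$.
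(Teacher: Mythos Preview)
Your proposal is essentially correct, and the ``variant'' you sketch in the last paragraph is precisely the route the paper takes. The paper fixes $z$ from Lemma~\ref{lemma33}, lets $\delta_\lambda$ be the maximizer of $\delta\mapsto E_{\theta,\lambda}(\delta z)$, and uses the first-order condition $\langle E'_{\theta,\lambda}(\delta_\lambda z),\delta_\lambda z\rangle=0$ together with $(\textbf{\textit{A}}_2)$ and $M_{\theta_i}\leq\theta_i$ to see that the Kirchhoff side of this identity stays bounded; if a subsequence $\delta_{\lambda_m}\to\delta_0>0$, the $\lambda_m$-weighted $F$-term on the other side would blow up, so $\delta_0=0$, and then $c_{\theta,\lambda_m}\leq E_{\theta,\lambda_m}(\delta_{\lambda_m}z)\leq\Phi_\theta(\delta_{\lambda_m}z)\to0$. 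The paper adds one step you omit: it invokes $(\textbf{\textit{F}}_3)$ once more to get monotonicity of $\lambda\mapsto c_{\theta,\lambda}$, upgrading subsequential convergence to the full limit. Your \emph{main} argument---splitting $[0,T_1]$ at a small $T_0$ and sending $T_0\to0$---is a more elementary alternative that sidesteps the critical-point identity and the subsequence argument entirely, yielding the full limit directly; this buys you a cleaner quantitative bound $c_{\theta,\lambda}\leq CT_0^{p^-}$ for $\lambda$ large. Both routes rest on exactly the point you flag as the obstacle: the nonnegativity of $\int_\Omega F(x,te)\,dx$ (and, in the paper's case, also of $\sum_i\int_\Omega F_{u_i}(x,\delta_\lambda z)\,\delta_\lambda z_i\,dx$) along the chosen ray. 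The paper, too, appeals to $(\textbf{\textit{F}}_3)$ at these moments---despite the lemma's stated hypotheses listing only $(\textbf{\textit{F}}_4)$---and is no more explicit than you are about the small-$t$ regime where $(\textbf{\textit{F}}_3)$ is silent.
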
 	
	\begin{proof}
		Let $z=(z_1,\dots,z_n)\in X$  be the function given by Lemma \ref{lemma33}. Then $\displaystyle\lim_{\delta \to \infty }E_{\theta,\lambda }(\delta z)=-\infty,$ for each $\lambda >0$, so it follows that there exists $\delta_\lambda>0$ such that
		$E_{\theta,\lambda}(\delta_{\lambda}z)=\max_{\delta\geq 0}E_{\theta,\lambda}(\delta z).$
		Hence,  		
		$
		\langle E'_{\theta,\lambda}(\delta_{\lambda}z), \delta_{\lambda}z\rangle=0,
		$
		so it follows by relation \eqref{derivata}  that		
		\begin{multline}\label{EFD3.3}
			\sum_{i=1}^{n} M_{\theta_i}\left(
			\mathcal{A}_i(\delta_{\lambda }z_{i})\right)\int_{\Omega  } \big(a_{1_i}(|\nabla (\delta_{\lambda }z_{i})| ^{p_i(x)}) |\nabla (\delta _{\lambda }z_{i})| ^{p_i(x)}
			+ a_{2_i}(|\delta_{\lambda }z_{i}| ^{p_i(x)}) | \delta _{\lambda }z_{i}| ^{p_i(x)}\big)\,dx =\sum_{i=1}^{n}\int_{ \Omega  }|\delta_{\lambda }z_{i}|^{s_i(x)}\,dx \\
			+ \sum_{i=1}^{n}\int_{\partial \Omega  }|\delta_{\lambda }z_{i}|^{\ell_i(x)}\,d\sigma_x 	 + \sum_{i=1}^{n}\lambda\delta_{\lambda }\displaystyle\int_{\Omega}F_{u_i}(x, \delta_\lambda z)z_i\,dx.
		\end{multline}		
		By construction, $z_i\geq 0$ a.e. in $\Omega,$ for all $i\in\{1,2,\dots,n\}$. Therefore, by assumption $(\textbf{\textit{F}}_3)$ and relation \eqref{EFD3.3}, 
		\begin{multline}\label{EFD3.4}
			\sum_{i=1}^{n} M_{\theta_i}\left(
		\mathcal{A}_i(\delta_{\lambda }z_{i})\right)\int_{\Omega  } \big(a_{1_i}(|\nabla (\delta_{\lambda }z_{i})| ^{p_i(x)}) |\nabla (\delta _{\lambda }z_{i})| ^{p_i(x)}
		+ a_{2_i}(|\delta_{\lambda }z_{i}| ^{p_i(x)}) | \delta _{\lambda }z_{i}| ^{p_i(x)}\big)\,dx \\\geq \sum_{i=1}^{n}\int_{ \Omega  }|\delta_{\lambda }z_{i}|^{s_i(x)}\,dx 
			+ \sum_{i=1}^{n}\int_{\partial \Omega  }|\delta_{\lambda }z_{i}|^{\ell_i(x)}\,d\sigma_x.
		\end{multline}	
		On the other hand, by assumption $(\textbf{\textit{A}}_2)$ and inequalities \eqref{L22}, \eqref{L23}, 
		\begin{equation}\label{Inq35}
		\begin{alignedat}4
				\sum_{i=1}^{n} M_{\theta_i}\left(
			\mathcal{A}_i(\delta_{\lambda }z_{i})\right)\int_{\Omega  }& \big(	a_{1_i}(|\nabla (\delta_{\lambda }z_{i})| ^{p_i(x)}) |\nabla (\delta _{\lambda }z_{i})| ^{p_i(x)}
			+ a_{2_i}(|\delta_{\lambda }z_{i}| ^{p_i(x)}) | \delta _{\lambda }z_{i}| ^{p_i(x)}
			\big)\,dx\\
			&\leq \sum_{i=1}^{n} \theta_i\Bigg(\int_{\Omega  }\bigg(
			a_{1_i}(|\nabla (\delta_{\lambda }z_{i})| ^{p_i(x)}) |\nabla (\delta _{\lambda }z_{i})| ^{p_i(x)}
			+ a_{2_i}(|\delta_{\lambda }z_{i}| ^{p_i(x)}) | \delta _{\lambda }z_{i}| ^{p_i(x)}
			\bigg)\,dx\Bigg) \\
			&\leq \sum_{i=1}^{n}\theta_i \Big(\max\{k_{1_i}^1,k_{2_i}^1\}\int_{\Omega} \big(|\nabla (\delta_{\lambda }z_{i})| ^{p_i(x)}
			+ |\delta_{\lambda }z_{i}| ^{p_i(x)}\big)\,dx
			+ k_i^3\int_{\Omega} \big(|\nabla (\delta_{\lambda }z_{i})| ^{q_i(x)} + |\delta_{\lambda }z_{i}| ^{q_i(x)}\big)\,dx\Big)\\
			&\leq  \sum_{i=1}^{n}\theta_i \Bigg(\max\{k_{1_i}^1,k_{2_i}^1\}\max \Big\{  \| \delta_{\lambda }z_{i}\|_{1,p_i(x)}^{p_i^-},
			\| \delta_{\lambda }z_{i}\|_{1,p_i(x)}^{p_i^+} \Big\} 
			+k_i^3 \max \Big\{  \|\delta_{\lambda }z_{i}\|_{1,q_i(x)}^{q_i^-},
			\| \delta_{\lambda }z_{i}\|_{1,q_i(x)}^{q_i^+} \Big\} 
			\Bigg).
		\end{alignedat}
	\end{equation}		
		Therefore, from relations \eqref{EFD3.4},\eqref{Inq35}, and inequalities \eqref{L22},\eqref{L23}, we obtain	
		\begin{equation}\label{Inq36}
		\begin{alignedat}4
		 \sum_{i=1}^{n}\theta_i \Bigg(\max\{k_{1_i}^1,k_{2_i}^1\}\max \Big\{  \| \delta_{\lambda }z_{i}\|_{1,p_i(x)}^{p_i^-},&
		\| \delta_{\lambda }z_{i}\|_{1,p_i(x)}^{p_i^+} \Big\} 
		+k_i^3 \max \Big\{  \|\delta_{\lambda }z_{i}\|_{1,q_i(x)}^{q_i^-},
		\| \delta_{\lambda }z_{i}\|_{1,q_i(x)}^{q_i^+} \Big\} 
		\Bigg)  \\
		&\geq \sum_{i=1}^{n}\int_{ \Omega  }|\delta_{\lambda }z_{i}|^{s_i(x)}\,dx 
		+ \sum_{i=1}^{n}\int_{\partial \Omega  }|\delta_{\lambda }z_{i}|^{s_i(x)}\,d\sigma_x
		\\
		&\geq\sum_{i=1}^{n} \min  
		\Big\{  \|\delta_{\lambda }z_{i}\|_{L^{s_i(x)}(\Omega)}^{s_i^-},
		\|\delta_{\lambda }z_{i}\|_{L^{s_i(x)}(\Omega)}^{s_i^+} \Big\}\\
	&\quad	+\sum_{i=1}^{n} \min  
		\Big\{  \|\delta_{\lambda }z_{i}\|_{L^{\ell_i(x)}(\partial\Omega)}^{\ell_i^-},
		\|\delta_{\lambda }z_{i}\|_{L^{\ell_i(x)}(\partial \Omega)}^{\ell_i^+} \Big\}.
		\end{alignedat}
	\end{equation}		
		Next, we shall show that the sequence $\{\delta_\lambda \}$ is bounded in $\mathbb{R}$.  Indeed, we suppose by contradiction that $\{\delta_\lambda \}$  is unbounded. Then there is a subsequence denoted by $\{\delta_{\lambda_m}\},$ with $\delta_{\lambda_m} \to \infty,$ as $m\to +\infty$. Then  by relation \eqref{Inq36},		
	\begin{equation}\label{Inq37}
		\begin{alignedat}4 
			\sum_{i=1}^{n} \Big(\frac{	\theta_i \max\{k_{1_i}^1,k_{2_i}^1\} \|z_{i}\|_{1,p_i(x)}^{p_i^+} }{\delta_{\lambda_m}^{q_{M}^+-p_{i}^+}}	
			+k_i^3 	\theta_i \| z_{i}\|_{1,q_i(x)}^{q_i^+} 
			\Big) 
			&\geq\sum_{i=1}^{n}  \delta_{\lambda }^{s_i^- -q_M^+} \|z_{i}\|_{L^{s_i(x)}(\Omega)}^{s^-}
			+\sum_{i=1}^{n}  \delta_{\lambda }^{\ell_i^- -q_M^+} \|z_{i}\|_{L^{\ell_i(x)}(\partial \Omega)}^{\ell_i^-},
			\end{alignedat}
	\end{equation}
	where $q_{M}^+=\displaystyle\max_{1\leq i\leq n}\{ q_i^+\}$. Therefore, when taking the limit as $m\to +\infty$, we get a contradiction because $p_i^+<q_M^+<\inf\{s_i^-,\ell_i^-\}$. Thus, we can conclude that $\{\delta_\lambda \}$ is indeed bounded in $\mathbb{R}$.\\	
		Consider a sequence $\{\lambda_m\}_{m\in \mathbb{N}}$ such that $\lambda_m\to +\infty $ and let $\delta_0\geq 0$  be such that $ \delta_{\lambda_m} \to \delta_0,$ as $m\to +\infty$. Then by continuity of $M_{\theta_i}$,  $\{M_{\theta_i}(\mathcal{A}_i(\delta_{\lambda_m}z_i))\}_{m\in \mathbb{N}}$ is bounded, for all $i\in \{1,2,\dots,n\}$. Therefore, there exists $C>0$ such that 		
		$$ \sum_{i=1}^{n}M_{\theta_i}(\mathcal{A}_i(\delta_{\lambda_m}z_i))\int_{\Omega  } \big(a_{1_i}(|\nabla \delta_{\lambda_m }z_{i}| ^{p_i(x)}) |\nabla \delta_{\lambda_m }z_{i}| ^{p_i(x)} 
		+ a_{2_i}(|\delta_{\lambda_m }z_{i}| ^{p_i(x)}) | \delta_{\lambda_m }z_{i}| ^{p_i(x)}\big)\,dx \leq C ,
		\text{ for all } m \in \mathbb{N}$$	
			so by inequality \eqref{EFD3.4}, we have		
	\begin{equation}\label{Inq38}
			\sum_{i=1}^{n}\int_{ \Omega  }|\delta_{\lambda_m }z_{i}|^{s_i(x)}\,dx 
		+ \sum_{i=1}^{n}\int_{\partial \Omega  }|\delta_{\lambda_m }z_{i}|^{\ell_i(x)}\,d\sigma_x + \sum_{i=1}^{n}\displaystyle\int_{\Omega  }\lambda \delta_{\lambda_m }F_{u_i}(x, \delta_{\lambda_m} z)z_i\,dx \leq C,
		\text{ for all } m \in \mathbb{N}.
	\end{equation}
		We shall prove that $\delta_0=0$. Indeed, if $\delta_0>0$, then by assumption $(\textbf{\textit{F}}_2)$, there exist positive functions $b_{i_j}$ ($1\leq i,j \leq n$), such that
		\begin{gather*}
			\Big|  F_{\xi_i}(x,\xi_1,\dots,\xi_n)\Big| \leq
			\sum_{j=1}^{n} b_{i_j}(x)| \xi_j|^{\ell_{i_j}-1},
				\ \hbox{	where} \ 
			1<\ell_{i_j}<\inf_{x\in \Omega}h_i(x),
				\ \hbox{ for all} \  
				x\in \Omega.
		\end{gather*}	
	Thus, by the Lebesgue Dominated Convergence Theorem, we get 		
		$$\sum_{i=1}^{n}\displaystyle\int_{\Omega  }\lambda \delta_{\lambda_m }F_{u_i}(x, \delta_{\lambda_m} z)z_i\,dx 
		\to \sum_{i=1}^{n}\displaystyle\int_{\Omega  }\lambda \delta_{0 }F_{u_i}(x, \delta_0 z)z_i\,dx ,
		\ \hbox{ as } \ 
		m\to +\infty$$	
	By remembering that $\lambda_m\to +\infty$, we find		
		\begin{equation*}
			\sum_{i=1}^{n}\int_{ \Omega  }|\delta_{\lambda_m }z_{i}|^{s_i(x)}\,dx 
			+ \sum_{i=1}^{n}\int_{\partial \Omega  }|\delta_{\lambda_m }z_{i}|^{\ell_i(x)}\,d\sigma_x + \sum_{i=1}^{n}\displaystyle\int_{\Omega  }\lambda \delta_{\lambda_m }F_{u_i}(x, \delta_{\lambda_m} z)z_i\,dx \to +\infty,
			\text{ as } \lambda_m \to +\infty.
		\end{equation*}		
	This contradicts the fact \eqref{Inq38}, so we can deduce that $\delta_0=0$.\\		
	 	Next, we consider the following path $\phi_*(\delta)=\delta z$ for $\delta\in[0,1]$ which belongs to $\Gamma$. By using assumption $(\textbf{\textit{F}}_3)$, we obtain
		\begin{equation}\label{inq39}
			0<c_{\theta,\,\lambda_m}\leq\max_{\delta\in[0,1]}E_{\theta,\,\lambda_m}(\phi_*(\delta))\leq E_{\theta,\,\lambda}(\delta_{\lambda_m} z)\leq 	\sum_{i=1}^{n}\widehat{M_{\theta_i}}\left(\mathcal{A}_i(\delta_{\lambda_m}z_i)\right).
		\end{equation}		
	On the other hand, since $M_{\theta_i}$ are continuous 
	for all $1\leq i \leq n,$ and $\delta_0=0$, we get
		$\lim_{ m\to +\infty } \widehat{M_{\theta_i}}\left(\mathcal{A}_i(\delta_{\lambda_m}z_i)\right)=0 ,$
		for all
		$ 
		i\in \{1,2,\dots,n\}.$
	Thus, from relation \eqref{inq39}, we get 
		$\lim_{ m\to +\infty } c_{\theta,\,\lambda_m}=0.$
	Moreover, by using also assumption $(\textbf{\textit{F}}_3),$ we achieve that the sequence $\{c_{\theta,\,\lambda}\}_{\lambda}$ is monotone. Therefore, we have completed the proof.
	\end{proof}
	\begin{lemma}\label{lemma35}
		If $\left\lbrace u_m=(u_{1_m},u_{2_m},\dots,u_{n_m})\right\rbrace_{m\in \mathbb{N}} $ is a Palais-Smale sequence for $E_{\theta,\lambda }$, then $\left\lbrace u_m\right\rbrace_{m}$ is bounded in  $X$.
	\end{lemma}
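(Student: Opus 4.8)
The plan is to run the classical Ambrosetti--Rabinowitz scheme for boundedness of Palais--Smale sequences, but with a component-wise multiplier adapted to the exponents $\gamma_i$ from $(\textbf{\textit{F}}_3)$ and with the truncated Kirchhoff functions kept under control by the two-sided estimate \eqref{hypM}. So let $\{u_m\}$ satisfy $E_{\theta,\lambda}(u_m)\to c_{\theta,\lambda}$ and $E_{\theta,\lambda}'(u_m)\to 0$ in $X'$, and put $w_m:=(\gamma_1^{-1}u_{1_m},\dots,\gamma_n^{-1}u_{n_m})\in X$, so that $\|w_m\|\le(\min_{1\le i\le n}\gamma_i)^{-1}\|u_m\|$ and hence $\langle E_{\theta,\lambda}'(u_m),w_m\rangle=o(1)\|u_m\|$ as $m\to\infty$. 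The first step is to expand $E_{\theta,\lambda}(u_m)-\langle E_{\theta,\lambda}'(u_m),w_m\rangle$ via the definition of $E_{\theta,\lambda}$ and of \eqref{derivata}, and to bound each block from below. For the Kirchhoff block, set $T_{i,m}:=\int_\Omega\big(a_{1_i}(|\nabla u_{i_m}|^{p_i(x)})|\nabla u_{i_m}|^{p_i(x)}+a_{2_i}(|u_{i_m}|^{p_i(x)})|u_{i_m}|^{p_i(x)}\big)\,dx$; combining $\widehat{M}_{\theta_i}(\tau)\ge\mathfrak{M}_i^0\tau$ and $M_{\theta_i}\le\theta_i$ from \eqref{hypM}, the definition of $\mathcal{A}_i$, and the inequality $A_{j_i}(\xi)\ge\beta_{j_i}^{-1}a_{j_i}(\xi)\xi$ of $(\textbf{\textit{A}}_4)$ yields $\widehat{M}_{\theta_i}(\mathcal{A}_i(u_{i_m}))-\gamma_i^{-1}M_{\theta_i}(\mathcal{A}_i(u_{i_m}))T_{i,m}\ge\kappa_i T_{i,m}$, where $\kappa_i:=\frac{\mathfrak{M}_i^0}{p_i^+\max\{\beta_{1_i},\beta_{2_i}\}}-\frac{\theta_i}{\gamma_i}>0$ precisely thanks to the choice of $\theta_i$ recorded in \eqref{hypM}; the lower bound in $(\textbf{\textit{A}}_2)$ then gives $T_{i,m}\ge d_i\big(\rho_{1,p_i(x)}(u_{i_m})+\mathcal{K}(k_i^3)\rho_{1,q_i(x)}(u_{i_m})\big)$ with $d_i:=\min\{\min\{k_{1_i}^0,k_{2_i}^0\},\min\{k_{1_i}^2,k_{2_i}^2\}\}>0$.

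Next I would dispose of the remaining terms. In $E_{\theta,\lambda}(u_m)-\langle E_{\theta,\lambda}'(u_m),w_m\rangle$ the power terms appear as $-\int_\Omega\big(\frac{1}{s_i(x)}-\frac{1}{\gamma_i}\big)|u_{i_m}|^{s_i(x)}\,dx$ and $-\int_{\partial\Omega}\big(\frac{1}{\ell_i(x)}-\frac{1}{\gamma_i}\big)|u_{i_m}|^{\ell_i(x)}\,d\sigma_x$, both of which are nonnegative since $\gamma_i<s_i^-\le s_i(x)$ and $\gamma_i<\ell_i^-\le\ell_i(x)$ by $(\textbf{\textit{F}}_3)$, and may be dropped. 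For the potential term, split $\Omega=\Omega_m'\cup\Omega_m''$, where $\Omega_m'$ is the set of $x$ at which the inequality in $(\textbf{\textit{F}}_3)$ holds for the value $u_m(x)$: on $\Omega_m'$ the quantity $\lambda\big(\sum_i\gamma_i^{-1}F_{u_i}(x,u_m)u_{i_m}-F(x,u_m)\big)$ is $\ge 0$, whereas on $\Omega_m''$ each $|u_{i_m}(x)|$ is bounded by a fixed constant, so by $(\textbf{\textit{F}}_2)$ and $(\textbf{\textit{F}}_4)$ the integrand there is dominated by a fixed $L^1(\Omega)$ function independent of $m$. Collecting everything produces a constant $C_1>0$, independent of $m$, with $E_{\theta,\lambda}(u_m)-\langle E_{\theta,\lambda}'(u_m),w_m\rangle\ge\kappa_*\sum_{i=1}^n\big(\rho_{1,p_i(x)}(u_{i_m})+\mathcal{K}(k_i^3)\rho_{1,q_i(x)}(u_{i_m})\big)-C_1$, with $\kappa_*:=\min_i\kappa_i d_i>0$, while the left-hand side is $\le(c_{\theta,\lambda}+1)+o(1)\|u_m\|$ for $m$ large.

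Finally I would argue by contradiction. If $\{u_m\}$ were unbounded, pass to a subsequence with $\|u_m\|\to\infty$ along which the maximum defining $\|u_m\|$ is attained at a fixed index $i_0$, so $\|u_{i_0,m}\|_{i_0}=\|u_m\|\to\infty$. When $\mathcal{K}(k_{i_0}^3)=0$ this forces $\|u_{i_0,m}\|_{1,p_{i_0}(x)}\to\infty$; when $\mathcal{K}(k_{i_0}^3)=1$, the continuous embedding $W^{1,q_{i_0}(x)}(\Omega)\hookrightarrow W^{1,p_{i_0}(x)}(\Omega)$ (valid by Proposition \ref{prop21} since $p_{i_0}(x)\le q_{i_0}(x)$ on $\Omega$) gives $\|u_{i_0,m}\|_{i_0}\le(1+c_0)\|u_{i_0,m}\|_{1,q_{i_0}(x)}$, hence $\|u_{i_0,m}\|_{1,q_{i_0}(x)}\to\infty$. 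Invoking \eqref{L23} in the relevant variable, one obtains $\rho_{1,p_{i_0}(x)}(u_{i_0,m})+\mathcal{K}(k_{i_0}^3)\rho_{1,q_{i_0}(x)}(u_{i_0,m})\ge c\,\|u_m\|^{\pi_{i_0}}$ for all large $m$, with $\pi_{i_0}\in\{p_{i_0}^-,q_{i_0}^-\}$, so $\pi_{i_0}>1$. Substituting into the previous estimate yields $(c_{\theta,\lambda}+1)+o(1)\|u_m\|+C_1\ge\kappa_*c\,\|u_m\|^{\pi_{i_0}}$; dividing by $\|u_m\|^{\pi_{i_0}}$ and letting $m\to\infty$ forces $0\ge\kappa_*c>0$, which is absurd, so $\{u_m\}$ is bounded in $X$. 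The only non-routine point is the Kirchhoff block: producing the strictly positive constant $\kappa_i$ requires simultaneously the truncation bound $M_{\theta_i}\le\theta_i<\gamma_i\mathfrak{M}_i^0/(p_i^+\max\{\beta_{1_i},\beta_{2_i}\})$ of \eqref{hypM} and the structural inequality $A_{j_i}(\xi)\ge\beta_{j_i}^{-1}a_{j_i}(\xi)\xi$ of $(\textbf{\textit{A}}_4)$; without the truncation, $M_i$ could grow faster than any power and the estimate would collapse. A minor bookkeeping nuisance is the intersection-space norm $\|\cdot\|_i$ together with the dichotomy $\mathcal{K}(k_i^3)\in\{0,1\}$, handled above by reducing to the maximizing component.
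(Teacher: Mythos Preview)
Your argument is correct and follows the same Ambrosetti--Rabinowitz scheme as the paper's proof: form $E_{\theta,\lambda}(u_m)-\langle E_{\theta,\lambda}'(u_m),u_m/\gamma\rangle$, extract a strictly positive coefficient in front of the modulars via \eqref{hypM} and $(\textbf{\textit{A}}_4)$, and contradict unboundedness using \eqref{L23}. Your execution is in fact slightly more careful than the paper's (you split $\Omega$ according to where $(\textbf{\textit{F}}_3)$ applies and bound the complement via $(\textbf{\textit{F}}_2)$--$(\textbf{\textit{F}}_4)$, and you collapse the paper's three-case analysis for $k_i^3>0$ by passing to the maximizing component and invoking the embedding $W^{1,q_{i_0}(x)}(\Omega)\hookrightarrow W^{1,p_{i_0}(x)}(\Omega)$), but the strategy is identical.
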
	
	\begin{proof}
		Let  $\left\lbrace u_m=(u_{1_m},u_{2_m},\dots,u_{n_m})\right\rbrace_{m} $  be a $(PS)_c$ for $E_{\theta,\lambda }$. Then
		we have
		\begin{align*}
			E_{\theta,\lambda }(u_m)&=\sum_{i=1}^{n}\widehat{M_{\theta_i}}\left(
			\mathcal{A}_i(u_{i_m}(x))\right)-
			\sum_{i=1}^{n}\displaystyle\int_{ \Omega  }\dfrac{1}{s_i(x)}|u_{i_m}(x)|^{s_i(x)}dx -
			\sum_{i=1}^{n}\displaystyle\int_{\partial \Omega  }\dfrac{1}{\ell_i(x)}|u_{i_m}(x)|^{\ell_i(x)}d\sigma_x \\
			&\quad - \int_{\Omega}\lambda F(x,u_m)dx
			= c+o_m(1).
		\end{align*}
		On the other hand, for all $v=(v_1,v_2,\dots,v_n)\in  X$, we have
		\begin{multline}\label{e3.1}
			\langle E_{\theta,\lambda }'(u_m),v\rangle = \sum_{i=1}^{n} M_{\theta_i}\left(
			\mathcal{A}_i(u_{i_m})\right)\int_{\Omega  } \bigg(
			\mathcal{B}_{1_i}(\nabla u_{i_m}) \nabla u_{i_m} \nabla v_i
			+ \mathcal{B}_{2_i}(u_{i_m}) u_{i_m}  v_i\bigg)\,dx
			- \sum_{i=1}^{n}\int_{ \Omega  }|u_{i_m}|^{s_i(x)-2}u_{i_m} v_i\,dx \\
			\quad - \sum_{i=1}^{n}\int_{\partial \Omega  }|u_{i_m}|^{s_i(x)-2}u_{i_m} v_i\,d\sigma_x 	- \sum_{i=1}^{n}\displaystyle\int_{\Omega  }\lambda F_{u_i}(x,u_m)v_i\,dx
			=o_m(1).
		\end{multline}
		Thus,
		\begin{align*}
			 E_{\theta, \lambda }(u_m)-\langle	E_{\theta,\lambda }'(u_m),\frac{u_m}{\gamma }\rangle&\geq \sum_{i=1}^{n} \Bigg(
			\widehat{M_{\theta_i}}\left(
			\mathcal{A}_i(u_{i_m})\right) -
			\frac{1}{\gamma_i}M_{\theta_i}\left(
			\mathcal{A}_i(u_{i_m})\right)\int_{\Omega  } \bigg(a_{1_i}(|\nabla u_{i_m}| ^{p_i(x)}) |\nabla u_{i_m}| ^{p_i(x)} \\
			&+a_{2_i}(|u_{i_m}| ^{p_i(x)}) | u_{i_m}| ^{p_i(x)} \bigg)\,dx
			\Bigg)
			+ \sum_{i=1}^{n} \Big(\frac{ 1}{\gamma_i}-\frac{ 1}{s_i^-}\Big)\displaystyle\int_{\Omega  }|u_{i_m}|^{s_i(x)}dx\\
			&	+ \sum_{i=1}^{n} \Big(\frac{ 1}{\gamma_i}-\frac{ 1}{\ell_i^-}\Big)\displaystyle\int_{\partial\Omega  }|u_{i_m}|^{\ell_i(x)}d\sigma_x  +\lambda  \displaystyle\int_{\Omega}
			\left[ \sum_{i=1}^{n}  \frac{u_{i_m}}{\gamma_i}F_{ u_i}(x,u_{m})	-F(x,u_{m})
			\right] dx, 
		\end{align*}
		where $\frac{u_m}{\gamma}=(\frac{u_{1_m}}{\gamma_1},\frac{u_{2_m}}{\gamma_2},\dots,\frac{u_{n_m}}{\gamma_n})$.
		Therefore, by using assumptions $(\textbf{\textit{A}}_4)$, $(\textbf{\textit{M}})$ and  $(\textbf{\textit{F}}_3)$, we
		can conclude that		
		\begin{align*}
		E_{\theta,\lambda }(u_m)- \langle	E_{\theta,\lambda }'(u_m),\frac{u_m}{\gamma }\rangle	&\geq \sum_{i=1}^{n} \mathfrak{M}_i^0\Bigg(\frac{\theta_i}{p_i^+}\int_{\Omega}\big(A_{1_i}(|\nabla u_{i_m}| ^{p_i(x)}) + A_{2_i}(| u_{i_m}| ^{p_i(x)})\big)dx\\
			&\quad-
			\frac{1}{\gamma_i }\int_{\Omega}\big(a_{1_i}(|\nabla u_{i_m}|^{p_i(x)})|\nabla u_{i_m}|^{p_i(x)} + a_{2_i}(| u_{i_m}|^{p_i(x)})| u_{i_m}|^{p_i(x)}\big)dx
			\Bigg)\\
			&\geq \sum_{i=1}^{n}\Big(\frac{\theta_i\mathfrak{M}_i^0}{p_i^+  \max\{\beta_{1_i}, \beta_{2_i}\}}- \frac{\mathfrak{M}_i^0}{\gamma_i }\Big)\int_{\Omega}\bigg(a_{1_i}(|\nabla u_{i_m}|^{p_i(x)})|\nabla u_{i_m}|^{p_i(x)}+ a_{2_i}(|u_{i_m}|^{p_i(x)})| u_{i_m}|^{p_i(x)}\bigg)dx.
		\end{align*}	
		By using assumption $(\textbf{\textit{A}}_4)$, we can find positive constants $C_{i1}$ and $C_{i2}$ such that		
		\begin{align}\label{key}
		E_{\theta,\lambda }(u_m)-\langle	E_{\theta,\lambda }'(u_m),\frac{u_m}{\gamma }\rangle	&\geq \sum_{i=1}^{n} \Bigg[ C_{i1} \Big(\int_{\Omega}\big(|\nabla u_{i_m}|^{p_i(x)}+ | u_{i_m}|^{p_i(x)}\big)dx\Big) +C_{i2}\mathcal{K}(k_i^3)\Big(\int_{\Omega}\big(|\nabla u_{i_m}|^{q_i(x)}+| u_{i_m}|^{q_i(x)}\big)dx\Big)\Bigg].
		\end{align}
	To prove the assertion, we assume by contradiction that
		$\left\|  u_{i_m}\right\|_i =\| u_i\|_{1,p_i(x)}+ \mathcal{K}(k_i^3)\|  u_i\|_{1,q_i(x)}
		\to + \infty $.		
	So, if $k_i^3=0$, then  by using relation \eqref{L22}, we have 
		$
		E_{\theta,\lambda }(u_m)-\langle	E_{\theta,\lambda }'(u_m),\frac{u_m}{\gamma }\rangle 
		\geq  \sum_{i=1}^{n} C_{i}  \| u_{i_m}\|_{i}^{p_i^{-}}.
		$	
		Thus, we
		can
		 find
		$
			c+ o_m(1)
			\geq \sum_{i=1}^{n} C_{i}  \| u_{i_m}\|_{i}^{p_i^{-}}.
		$
	Since $p_i^->1$, we obtain a contradiction.  Hence, we can deduce that $\left\lbrace u_{m}\right\rbrace $ is bounded in $X$.		\\
		When $k_i^3>0$,  we have three cases to analyze.\par
		
		$(i)  \left\|  u_{i_m} \right\| _{1,p_i(x)} \to +\infty$ and $\left\|  u_{i_m}\right\| _{1,q_i(x)} \to +\infty,$  as $m \to  +\infty$,
		
		$(ii)  \left\|  u_{i_m} \right\| _{1,p_i(x)} \to +\infty$  and $\left\|  u_{i_m}\right\| _{1,q_i(x)}$ is bounded,		
		
		$(iii)  \left\|  u_{i_m} \right\| _{1,p_i(x)}$ is bounded and $\left\|  u_{i_m}\right\| _{1,q_i(x)}\to +\infty$.\\
		In the  case $(i)$, for $m$ sufficiently large, we have $\| u_{i_m}\|_{1,q_i(x)}^{q^-} \geq \| u_{i_m}\|_{1,q_i(x)}^{p^-} $. Hence, by inequality \eqref{key}, we get		
		\begin{align*}
			c+ o_m(1)&\geq \sum_{i=1}^{n}  \Big[ C_{1_i} \| u_{i_m}\|_{1,p_i(x)}^{p_i^{-}} +C_{2_i}\mathcal{K}(k_i^3)\| u_{i_m}\|_{1,q_i(x)}^{q_i^{-}}\Big]\\
			&\geq \sum_{i=1}^{n}   \Big[ C_{1_i} \| u_{i_m}\|_{1,p_i(x)}^{p_i^{-}} +C_{2_i}\mathcal{K}(k_i^3)\| u_{i_m}\|_{1,q_i(x)}^{p_i^{-}}\Big]\\
			&\geq \sum_{i=1}^{n} C_{3i} \| u_{i_m}\|_{_i}^{p_i^{-}},
		\end{align*}
		and this is a contradiction.		
		In the case $(ii)$, by using  inequality \eqref{key}, we conclude that
		$
			c+ o_m(1)
			\geq \sum_{i=1}^{n} C_{1_i} \| u_{i_m}\|_{1,p_i(x)}^{p_i^{-}} ,
		$	
	Hence,  we also get a contradiction when  limit as $m\to +\infty$ because $p_i^->1$.
		In the  case $(iii)$, the proof is similar as in the case $(ii)$ so we shall omit it.			Finally, we can deduce that $\{u_{m}\} $ is bounded sequence in $X$.		
	\end{proof}
Next, we shall prove that the auxiliary problem \eqref{s.aux1.1} possesses at least one nontrivial weak solution.	
	\begin{proof}[\bf Proof of Theorem \ref{AuxTh}]
		By Lemmas \ref{lemma32} and \ref{lemma33}, the functional $ E_{\theta,\,\lambda}$ satisfies the geometric structure required by Mountain  Pass Theorem \ref{PMT}. Now, it remains to check the validity of the Palais-Smale condition. Let $\left\{u_m=(u_{1_m},u_{2m},\dots,u_{n_m})\right\}_{m\in\mathbb{N}}$ be a Palais-Smale sequence at the level $c_{\theta,\,\lambda}$  in $X$. Then  Lemma \ref{lemma34} implies that there exists $\lambda_{*}$ such that		
			$$c_{\theta,\lambda }< \min \left\lbrace
		 \min_{1\leq i\leq n} \left\lbrace \Big(\frac{1}{\gamma_i}-\frac{1}{{\ell_i}_{\mathcal{C}^1_{h_{i}}}^-}\Big)\inf_{j\in J_i^1}\Big\{\overline{T_{i}}_{x_j}^N\Big(D_i\Big)^{N/h_i(x_j)}\Big\} \right\rbrace
		  ,
		 \min_{1\leq i\leq n} \left\lbrace \Big(\frac{1}{\gamma_i}-\frac{1}{{s_i}_{\mathcal{C}^2_{h_{i}}}^-}\Big)\inf_{j\in J_i^2}\Big\{S_{i}^N\Big(D_i\Big)^{N/h_i(x_j)}\Big\} \right\rbrace
		   \right\rbrace , $$
		where $\overline{T_{i}}_{x_j}$ and $S_{i } $ are given respectively  in relations \eqref{GNS} and \eqref{GNSS} and 
		$$D_i= \mathfrak{M}_i^0\big(\min\{k_{1_i}^0, k_{2_i}^0\}(1-\mathcal{K}(k_i^3)+\mathcal{K}(k_i^3)\min\{k_{1_i}^2,k_{2_i}^2\})\big).$$ 	
		So, there exists a subsequence strongly convergent in $X$. Indeed, applying  Lemma \ref{lemma35}, $\{ u_m\}_{m\in \mathbb{N}}$ is bounded in $X$, passing to a subsequence, still denoted by $\{ u_m\}_{m}$ weakly convergent in $X$, so there exist positive  bounded measures $\mu_i$, $\nu_i \in  \Omega $ and $\overline{\nu}_i \in \partial \Omega $ such that		
		$
		|\nabla u_{i_m}|^{h_i(x)}\rightharpoonup \mu_i , \quad
		| u_{i_m}|^{s_i(x)}\rightharpoonup \nu_i ,~ \text{    and   } ~| u_{i_m}|^{\ell_i(x)}\rightharpoonup \overline{\nu_i} .
		$
		Hence, by Theorems \ref{ccp} and \ref{ccpp}, if $\bigcup_{i=1}^{n}(J_i^1\cup J_i^2) =\emptyset ,$ then $u_{i_m} \rightharpoonup u_i$ in  $L^{s_i(x)}(\Omega)$ and $u_{i_m} \rightharpoonup u_i$ in  $ L^{\ell_i(x)}(\partial \Omega),$ for all $1\leq i\leq n$. Let us prove that if
		$$c_{\theta,\lambda}< \min \left\lbrace
		\min_{1\leq i\leq n} \left\lbrace \Big(\frac{1}{\gamma_i}-\frac{1}{{\ell_i}_{\mathcal{C}^1_{h_{i}}}^-}\Big)\inf_{j\in J_i^1}\Big\{\overline{T_{i}}_{x_j}^N\Big(D_i\Big)^{N/h_i(x_j)}\Big\} \right\rbrace,
		\min_{1\leq i\leq n} \left\lbrace \Big(\frac{1}{\gamma_i}-\frac{1}{{s_i}_{\mathcal{C}^2_{h_{i}}}^-}\Big)\inf_{j\in J_i^2}\Big\{S_{i}^N\Big(D_i\Big)^{N/h_i(x_j)}\Big\} \right\rbrace
		\right\rbrace$$
		 and $\{u_m\}_{m\in \mathbb{N}} $ is a Palais-Smale sequence with energy level $c_{\theta,\lambda},$ then $J_i^1\cup J_i^2=\emptyset,$ for all  $1\leq i\leq n$. In fact, suppose there is $i$ in $\{1,\dots,n\} $such that  $J_i^1\cup J_i^2$ is nonempty, then $J_i^1\neq\emptyset$ or $J_i^2\neq\emptyset$.\\
		First, we assume the case $J_i^1\neq\emptyset$.
		Let $x_j\in \mathcal{C}^1_{h_{i}}$ be a singular point of the measures $\mu_i$ and $\overline{\nu_i}$. 
		Consider  $\psi \in C^{\infty}_0(\mathbb{R}^N)$, such that
		$0\leq \psi(x) \leq 1$,  $\psi(0)=1,$ and $\text{supp}\psi\subset B(0,1)$.		
	We consider, for each $j\in J_i^1$ and any $\varepsilon>0$, the functions			
		$ 	 \psi_{j,\varepsilon} := \psi \Big(\frac{x-x_j}{\varepsilon}\Big),$
		for all
		$ x \in  \mathbb{R}^N.$		
		Notice that $\psi_{j,\varepsilon} \in C^{\infty}_0(\mathbb{R}^N, \left[ 0,1\right] )$, $|\nabla \psi_{j,\epsilon}|_\infty\leq \frac{2}{\varepsilon},$ and
		\begin{eqnarray*}
			\psi_{j,\varepsilon}(x)=
			\begin{cases}
				1,  &  x\in  B_{}(x_j,\varepsilon),\\
				
				0, 	&  x\in \mathbb{R}^N\setminus B_{}(x_j,2\varepsilon). \\
			\end{cases}
		\end{eqnarray*}
		Since  $\{u_{i_m}\}_{m}$ is bounded in  $W^{1,h_i(x)}(\Omega)\cap W^{1,p_i(x)}(\Omega)$, the sequence $ \left\lbrace u_{i_m}\psi_{j,\varepsilon}\right\rbrace $ is also bounded in $W^{1,h_i(x)}(\Omega)\cap W^{1,p_i(x)}(\Omega)$. So, by relation \eqref{e3.1}, we obtain 
	$
\langle E_{\theta,\lambda }'(u_{1_m},\dots, u_{i_m},\dots, u_{n_m}),(0,\dots,u_{i_m}\psi_{j,\varepsilon},\dots,0
		)\rangle \rightarrow 0 \text{ as }  m \rightarrow +\infty.
			$
		Therefore, we have 
		\begin{align*}
			\langle E_{\theta, \lambda }'(u_m)(0,\dots, u_{i_m}\psi_{j,\varepsilon},\dots,0
			)\rangle &=   M_{\theta_i}\left(
			\mathcal{A}_i(u_{i_m})\right)\int_{\Omega  }\bigg(a_{1_i}(|\nabla u_{i_m}| ^{p_i(x)}) |\nabla u_{i_m}| ^{p_i(x)-2}\nabla u_{i_m}\nabla (u_{i_m}\psi_{j,\varepsilon}) \\
		&	 +
			a_{2_i}(|u_{i_m}| ^{p_i(x)}) | u_{i_m}| ^{p_i(x)-2}u_{i_m} (u_{i_m}\psi_{j,\varepsilon})
			\bigg)\,dx
				- \int_{\Omega  }|u_{i_m}|^{s_i(x)-2}u_{i_m} (u_{i_m}\psi_{j,\epsilon})\,dx\\
		&	- \int_{\partial \Omega  }|u_{i_m}|^{\ell_i(x)-2}u_{i_m} (u_{i_m}\psi_{j,\epsilon})\,d\sigma_x		
				-\int_{\Omega  }\lambda  F_{u_i}(x,u_{m})u_{i_m}\psi_{j,\varepsilon}\,dx   \rightarrow 0 \text{ as } m \rightarrow +\infty
		\end{align*}		
		and we
		can
		 find
		\begin{multline}\label{3.2}
			M_{\theta_i}\left(
			\mathcal{A}_i(u_{i_m})\right)\int_{\Omega  } a_{1_i}(|\nabla u_{i_m}| ^{p_i(x)}) |\nabla u_{i_m}| ^{p_i(x)-2}\nabla u_{i_m}\nabla \psi_{j,\varepsilon} u_{i_m} \,dx = \int_{\Omega  }|u_{i_m}|^{s_i(x)}\psi_{j,\epsilon}\,dx
			+\int_{\partial \Omega  }|u_{i_m}|^{\ell_i(x)}\psi_{j,\epsilon}\,d\sigma_x
			\\-M_{\theta_i}\left(
			\mathcal{A}_i(u_{i_m})\right)	
			\int_{\Omega  } \bigg(a_{1_i}(|\nabla u_{i_m}| ^{p_i(x)}) |\nabla u_{i_m}| ^{p_i(x)} 
			+ a_{2_i}(| u_{i_m}| ^{p_i(x)}) |u_{i_m}| ^{p_i(x)}
			\Big)\psi_{j,\varepsilon} \,dx  + \int_{\Omega  }\lambda F_{u_i}(x,u_{m})u_{i_m}\psi_{j,\varepsilon}\,dx +o_m(1).
		\end{multline}		
		Next, we shall prove that		
		\begin{align}\label{3.3}
			\lim_{\varepsilon \to 0} \left\lbrace
			\limsup_{m\to +\infty } M_{\theta_i}\left(
			\mathcal{A}_i(u_{i_m})\right)\int_{\Omega  } a_i(|\nabla u_{i_m}| ^{p_i(x)}) |\nabla u_{i_m}| ^{p_i(x)-2}\nabla u_{i_m}\nabla\psi_{j,\varepsilon} u_{i_m} \,dx\right\rbrace  	& = 0.
		\end{align}		
		Notice that, due to assumption $(\textbf{\textit{A}}_2),$ it suffices to show that		
		\begin{align}\label{3.4}
			\lim_{\varepsilon \to 0} \left\lbrace
			\limsup_{m\to +\infty } M_{\theta_i}\left(
			\mathcal{A}_i(u_{i_m})\right)\int_{\Omega  } |\nabla u_{i_m}| ^{p_i(x)-2}\nabla u_{i_m}\nabla\psi_{j,\varepsilon} u_{i_m} \,dx\right\rbrace  & = 0.
		\end{align}
		and		
		\begin{align}\label{3.5}
			\lim_{\varepsilon \to 0} \left\lbrace
			\limsup_{m\to +\infty } M_{\theta_i}\left(
			\mathcal{A}_i(u_{i_m})\right)\int_{\Omega  } |\nabla u_{i_m}| ^{q_i(x)-2}\nabla u_{i_m}\nabla\psi_{j,\varepsilon} u_{i_m} \,dx\right\rbrace  & = 0.
		\end{align}		
		First, by applying  H\"{o}lder inequality, we obtain
		\begin{align*}
			\left| \int_{\Omega  }  |\nabla u_{i_m}| ^{p_i(x)-2}\nabla u_{i_m}\nabla\psi_{j,\varepsilon} u_{i_m}\,dx\right|
			&	\leq 2\left\|  \left|\nabla u_{i_m}\right|^{p_i(x)-1}\right\| _{L^{\frac{p_i(x)}{p_i(x)-1}}(\Omega)}\left\| \nabla\psi_{j,\varepsilon} u_{i_m}\right\| _{L^{p_i(x)}(\Omega)},
		\end{align*}
		since $\left\lbrace u_{i_m}\right\rbrace $ is bounded, the real-valued sequence $ \left\|  \left|\nabla u_{i_m}\right|^{p_i(x)-1}\right\| _{L^{\frac{p_i(x)}{p_i(x)-1}}(\Omega)}$ is also bounded, thus there exists a positive constant $C_i$ such that		
		\begin{align*}
			\left| \int_{\Omega  }  |\nabla  u_{i_m}| ^{p_i(x)-2}\nabla  u_{i_m}\nabla\psi_{j,\varepsilon}  u_{i_m} \,dx\right|
			&	\leq  C_i\left\|\nabla\psi_{j,\varepsilon}  u_{i_m}\right\|_{L^{p_i(x)}(\Omega)}.
		\end{align*}		
		Moreover, the sequence $\{ u_{i_m}\}$ is bounded in $W^{1,p_i(x)}(B_{}(x_j,2\varepsilon))$, so
		there is a subsequence, again denoted by $\left\lbrace  u_{i_m}\right\rbrace ,$  converging weakly to $u_i$ in $L^{p_i(x)}(B_{}(x_j,2\varepsilon))$. Therefore, 	
		\begin{align*}
			\limsup_{m \to +\infty }	\left| \int_{\Omega  }  |\nabla u_{i_m}| ^{p_i(x)-2}\nabla u_{i_m}\nabla\psi_{j,\varepsilon} u_{i_m}\,dx\right|
			&	\leq  C_i\left\|\nabla\psi_{j,\varepsilon}  u_{i}\right\|_{L^{p_i(x)}(\Omega)}\\
			&	\leq 2C_i \limsup_{\varepsilon \to 0}
			\big\||\nabla\psi_{j,\varepsilon}|^{p_i(x)}\big\|_{L^{\big(\frac{p_i^{\ast}(x)}{p_i(x)}\big)^{'}}(B_{}(x_j,2\varepsilon))} \big\||u_i|^{p_i(x)}\big\|_{L^{\frac{p_i^{\ast}(x)}{p_i(x)}}(B_{}(x_j,2\varepsilon))}	\\
			&	\leq 2C_i	\limsup_{\varepsilon \to 0 } \big\||\nabla\psi_{j,\varepsilon}|^{p_i(x)}\big\|_{L^{\frac{N}{p_i(x)}}(B_{}(x_j,2\varepsilon))} \big\||u_i|^{p_i(x)}\big\|_{L^{\frac{N}{N-p_i(x)}}(B_{}(x_j,2\varepsilon))}.
		\end{align*}		
		Note that		
		\[
		\int_{B_{}(x_j,2\varepsilon) }
		(|\nabla\psi_{j,\varepsilon}|^{p_i(x)})^{\frac{N}{p_i(x)}}dx = \int_{B_{}(x_j,2\varepsilon) } |\nabla\psi_{j,\varepsilon}|^{N}dx
		\leq \Big(\frac{2}{\varepsilon}\Big)^N \hbox{meas} (B_{}(x_j,2\varepsilon))=\frac{4^N}{N}\omega_N,
		\]
		where $\omega_N$ is the surface area of the $N$-dimensional unit sphere. Since $\displaystyle \int_{B_{}(x_j,2\varepsilon) } (|u_i|^{p_i(x)})^{\frac{N}{N-p_i(x)}}dx \to 0,$ when $\varepsilon \to 0$, we obtain that $\left\|\nabla\psi_{j,\varepsilon}  u_{i}\right\|_{L^{p_i(x)}(\Omega)}\to 0$, which implies
		\begin{equation}\label{con}
			\lim_{\varepsilon \to 0} \left\lbrace
			\limsup_{n\to +\infty }\left|
			\int_{\Omega  }  |\nabla u_{i_m}| ^{p_i(x)-2}\nabla u_{i_m}\nabla\psi_{j,\epsilon} u_{i_m} \,dx \right| \right\rbrace = 0.
		\end{equation}		
		Since the sequence $\left\lbrace  u_{i_m}\right\rbrace $  is bounded in $W^{1,h_i(x)}(\Omega)\cap W^{1,p_i(x)}(\Omega) $, we may assume that  $\mathcal{A}_i(u_{i_m}) \to \xi_i\geq 0 ,$ as $ m \to +\infty$. Note that $M_i(\xi_i)$ is is continuous, so
		we  have
				$
		M_i\Big(\mathcal{A}_i(u_{i_m})\Big) \to M_i(\xi_i)\geq \mathfrak{M}_i^0>0, $
		as
		$ m \to +\infty.
		$
		Therefore, by relation \eqref{con}, we obtain
		\begin{equation}\label{3.7}
			\lim_{\varepsilon \to 0} \left\lbrace
			\limsup_{m\to +\infty }M_i\left(\mathcal{A}_i(u_{i_m})\right)
			\int_{\Omega  }  |\nabla u_{i_m}| ^{p_i(x)-2}\nabla u_{i_m}\nabla\psi_{j,\varepsilon} u_{i_m} \,dx\right\rbrace   = 0.
		\end{equation}		
		Analogously, we can verify relation \eqref{3.5}. Hence, we have completed the proof
		of relation
		\eqref{3.3}.
		Similarly, we can also obtain		
		\begin{equation}\label{3.8}
			\lim_{\varepsilon \to 0 }
			\int_{\Omega  }\lambda  F_{u_i}(x,u_{m})\psi_{j,\epsilon} u_{i_m} dx=0, \text{ as } m \rightarrow +\infty.
		\end{equation}		
		By applying  H\"{o}lder inequality, assumption $(\textbf{\textit{F}}_2)$ and the fact
		that $0\leq \psi_{j,\varepsilon} \leq 1$,  we have		
		\begin{align*}
			\lim_{\varepsilon \to 0 }
			\int_{\Omega  }\lambda F_{ u_i}(x,u_{m})\psi_{j,\varepsilon} u_{i_m} d x &\leq
			\lim_{\varepsilon \to 0 }\lambda \int_{\Omega  }	\Big(\sum_{j=1}^{n} b_{i_j}(x)| u_jm|^{r_{i_j}-1}\Big) \psi_{j,\varepsilon} u_{i_m}dx\\
			&\leq \lim_{\varepsilon \to 0 }c\int_{\Omega  }	\Big(\sum_{j=1}^{n} b_{i_j}(x)| u_j|^{r_{i_j}-1}\Big)|\psi_{j,\varepsilon} u_{i_m}|dx\\
			&\leq \lim_{\varepsilon \to 0 }c_1 \Big(\sum_{j=1}^n
			|b_{i_j}|_{\alpha_{i_j}(x)}\big\||u_{jm}|^{r_{i_j}-1}\big\|_{L^{q_j^{\ast}(x)}(\Omega)}\big\|\psi_{j,\epsilon} u_{i_m}\big\|_{L^{q_i^{\ast}(x)}(\Omega)}
			\Big).
		\end{align*}		
		This  yields
		\begin{align*}
			\lim_{\varepsilon \to 0 }
			\int_{\Omega  }\lambda F_{ u_i}(x,u_{m})\psi_{j,\varepsilon} u_{i_m} d x &\leq \lim_{\varepsilon \to 0 }c_1
			\Big(\sum_{j=1}^n
			\|b_{i_j}\|_{L^{\alpha_{i_j}(x)}(B_{}(x_j,2\varepsilon))}\|u_{jm}\|_{L^{q_{j}(x)}(B_{}(x_j,2\varepsilon))}^{r_{i_j}-1}\Big)\big\| u_{i_m}\big\|_{L^{q_i(x)}(B_{}(x_j,2\varepsilon))},
		\end{align*}
		and the last term on the right-hand goes to zero, because		
		$\sum_{j=1}^n
		\|b_{i_j}\|_{L^{\alpha_{i_j}(x)}(B_{}(x_j,2\varepsilon))}\|u_j\|_{L^{q_{j}(x)}(B_{}(x_j,2\varepsilon))}^{r_{i_j}-1}<\infty.$
		Therefore, we have completed the proof of relation \eqref{3.8}.
		On the other hand, we have		
		$ \lim_{\varepsilon \to 0}\int_{\Omega} \psi_{j,\epsilon}d\mu_{i_j} =\mu_{i_j} \psi(0) $
		and 
		$
		\lim_{\varepsilon \to 0}\int_{\partial \Omega} \psi_{j,\epsilon}d\overline{\nu}_{i_j} =\overline{\nu}_{i_j} \psi(0),$
	and since $\mathcal{C}_{h_i}^1 \cap \mathcal{C}_{h_i}^2=\emptyset$, for $\epsilon>0$ sufficiently small, we have 
		$$
		\int_{\Omega  }|u_{i_m}| ^{p_i(x)}\psi_{j,\varepsilon}dx \to \int_{\Omega  }|u_{i}| ^{p_i(x)}\psi_{j,\varepsilon}dx
			~~\text{, }~~
		\int_{\Omega  }|u_{i_m}| ^{q_i(x)}\psi_{j,\varepsilon}dx \to \int_{\Omega  }|u_{i}| ^{q_i(x)}\psi_{j,\varepsilon}dx,
		\
		\int_{\Omega  }|u_{i_m}| ^{s_i(x)}\psi_{j,\varepsilon}dx \to \int_{\Omega  }|u_{i}| ^{s_i(x)}\psi_{j,\varepsilon}dx,
		$$
		hence when  $\epsilon\to 0$,		
		$$
		\int_{\Omega  }|u_{i}| ^{p_i(x)}\psi_{j,\varepsilon}dx\to 0
			~~\text{,}~~
		\int_{\Omega  }|u_{i}| ^{q_i(x)}\psi_{j,\varepsilon}dx\to 0	,	
			\
		\int_{\Omega  }|u_{i}| ^{s_i(x)}\psi_{j,\varepsilon}dx\to 0.$$		
		The function $\psi_{j,\varepsilon}$ has compact support, so letting  $m \to +\infty$ and $\varepsilon \to 0$ in relation \eqref{3.2}, we get from relations \eqref{3.3}--\eqref{3.8}, 
		\begin{equation}\label{inq3}
		\begin{alignedat}4 
			0 &=-\lim_{\varepsilon \to 0} \left[
			\limsup_{m\to +\infty } \Big(M_{\theta_i}\left(
			\mathcal{A}_i(u_{i_m})\right)\int_{\Omega  } \bigg(a_{1_i}(|\nabla u_{i_m}| ^{p_i(x)}) |\nabla u_{i_m}| ^{p_i(x)}
			+a_{2_i}(| u_{i_m}| ^{p_i(x)}) | u_{i_m}| ^{p_i(x)} \bigg)
			\psi_{j,\varepsilon}  \,dx\Big)\right]+\overline{\nu}_{i_j}\\
			&\leq -\mathfrak{M}_i^0\lim_{\varepsilon \to 0} \left[
			\limsup_{m\to +\infty } \Big(\int_{\Omega  }\bigg(a_{1_i}(|\nabla u_{i_m}| ^{p_i(x)}) |\nabla u_{i_m}| ^{p_i(x)}
			+a_{2_i}(| u_{i_m}| ^{p_i(x)}) | u_{i_m}| ^{p_i(x)} \bigg)\psi_{j,\varepsilon}  \,dx\Big)\right]+\overline{\nu}_{i_j}\\	
			& \leq -\mathfrak{M}_i^0\lim_{\varepsilon \to 0} \Bigg[
			\limsup_{m\to +\infty } \Bigg(\int_{\Omega  }
			\bigg(\min \{k_{1_i}^0,k_{2_i}^0 \}\big(|\nabla u_{i_m}| ^{p_i(x)} +
			| u_{i_m}| ^{p_i(x)}\big)\\
		&	\quad + \mathcal{K}(k_i^3)\min\{k_{1_i}^2,k_{2_i}^2\}  \big(|\nabla u_{i_m}| ^{q_i(x)} + | u_{i_m}| ^{q_i(x)}\big) \bigg)
			\psi_{j,\varepsilon}  \,dx\Big)\Bigg]+\overline{\nu}_{i_j}.	
				\end{alignedat}
		\end{equation}		
		Note that, when $k_i^3=0$, we have $h_{i}(x)=p_i(x)$. Hence, by using relation \eqref{2.1a1}, we have 	
		\begin{align*}
			0 &\leq  - \mathfrak{M}_i^0\min \{k_{1_i}^0,k_{2_i}^0 \}\lim_{\varepsilon \to 0}
			\int_{\Omega  }\psi_{j,\varepsilon}\,d\mu_i +  \overline{\nu}_{i_j}\\
			&\leq -\mathfrak{M}_i^0\min \{k_{1_i}^0,k_{2_i}^0 \}\mu_{i_j} - \mathfrak{M}_i^0\min \{k_{1_i}^0,k_{2_i}^0 \}\lim_{\varepsilon \to 0}
			\int_{\Omega  }|\nabla u_{i}| ^{p_i(x)}\psi_{j,\varepsilon}\,dx +  \overline{\nu}_{i_j}.
		\end{align*}	
		By applying the Lebesgue Dominated Convergence Theorem, we get
		$\lim_{\varepsilon \to 0}
		\int_{\Omega  }|\nabla u_{i}| ^{p_i(x)}\psi_{j,\varepsilon}\,dx=0.$		
		Therefore, 
		\begin{equation}\label{k0}
			\mathfrak{M}_i^0\min \{k_{1_i}^0,k_{2_i}^0 \}\mu_{i_j}	\leq \overline{\nu}_{i_j}.
		\end{equation}				
		On the other hand, if $k_i^3>0$, then $h_i(x)=q_i(x)$ Therefore, it follows from relations \eqref{GNS} and \eqref{inq3} that
		\begin{align*}
			0 &\leq - \mathfrak{M}_i^0\lim_{\varepsilon \to 0} \left[ \limsup_{m \to 0}
			\Big(\int_{\Omega }\mathcal{K}(k_i^3)\min \{k_{1_i}^2,k_{2_i}^2 \}|\nabla u_{i_m}| ^{q_i(x)}\psi_{j,\varepsilon}\,dx\Big) \right] +  \overline{\nu}_{i_j} \\
			&\leq -\mathfrak{M}_i^0\mathcal{K}(k_i^3)\min \{k_{1_i}^2,k_{2_i}^2 \}\lim_{\varepsilon \to 0}
			\int_{\Omega  }\psi_{j,\varepsilon}\,d\mu_i +  \overline{\nu}_{i_j}\\
			&\leq  -\mathfrak{M}_i^0\mathcal{K}(k_i^3)\min \{k_{1_i}^2,k_{2_i}^2 \}\mu_{i_j} -\mathfrak{M}_i^0\mathcal{K}(k_i^3)\min \{k_{1_i}^2,k_{2_i}^2 \}\lim_{\varepsilon \to 0} \int_{\Omega  }|\nabla u_{i}| ^{p_i(x)}\psi_{j,\varepsilon}\,dx +  \overline{\nu}_{i_j}.
		\end{align*}
		By applying the Lebesgue Dominated Convergence Theorem again, we get 
		$
			\lim_{\epsilon \to 0}
			\int_{\Omega  }|\nabla u_{i}| ^{q_i(x)}\psi_{j,\varepsilon}\,dx=0.
		$
		Hence, 
		\begin{equation}\label{k1}
			\mathfrak{M}_i^0\mathcal{K}(k_i^3)\min \{k_{1_i}^2,k_{2_i}^2 \}\mu_{i_j}	\leq \overline{\nu}_{i_j}.
		\end{equation}
		By combining relations \eqref{k0} and \eqref{k1}, we have
		$\mathfrak{M}_i^0\big((1-\mathcal{K}(k_i^3))\min \{k_{1_i}^0,k_{2_i}^0 \}+ \mathcal{K}(k_i^3)\min \{k_{1_i}^2,k_{2_i}^2 \}\big)\mu_{i_j} \leq \overline{\nu}_{i_j}.$  
		By using relation \eqref{GNS}, we obtain 	
		$$\overline{T_{i}}_{x_j}\overline{\nu}_{i_j}^\frac{1}{h_i^\ast(x_j)}\leq \mu_{i_j}^{\frac{1}{h_i(x_j)}}\leq \Biggl(\frac{\overline{\nu}_{i_j}}{\mathfrak{M}_i^0\big((1-\mathcal{K}(k_i^3))\min \{k_{1_i}^0,k_{2_i}^0 \}+ \mathcal{K}(k_i^3)\min \{k_{1_i}^2,k_{2_i}^2 \}\big)}\Biggr)^{\frac{1}{h_i(x_j)}} .$$
		which implies that $\overline{\nu}_{i_j}=0$ or $\overline{\nu}_{i_j}\geq \overline{T_i}_{x_j}^N\Big(\mathfrak{M}_i^0\big(\min \{k_{1_i}^0,k_{2_i}^0 \}(1-\mathcal{K}(k_i^3)+\mathcal{K}(k_i^3)\min \{k_{1_i}^2,k_{2_i}^2 \})\big)\Big)^{N/h_i(x_j)},$ for all $j \in J_i^1$.
	On the other hand, by using assumptions $(\textbf{\textit{M}})$ and $(\textbf{\textit{F}}_3)$, we have	
		\begin{align*}
			c_{\theta,\lambda }&=E_{\theta, \lambda }(u_{m})-	\langle E_{\theta,\lambda }'(u_{m}),\frac{u_{m}}{\gamma }\rangle\\
			&=\sum_{i=1}^{n}\widehat{M_{\theta_i}}\left(\mathcal{A}_i (u_{i_m})\right)-
			\sum_{i=1}^{n}\displaystyle\int_{\Omega  }\dfrac{1}{s_i(x)}|u_{i_m}|^{s_i(x)}dx
			-\sum_{i=1}^{n}\displaystyle\int_{\partial\Omega  }\dfrac{1}{\ell_i(x)}|u_{i_m}|^{\ell_i(x)}d\sigma_x  - \int_{\Omega}\lambda F(x,u_{m})\,dx
			\\
			&\quad-
			\sum_{i=1}^{n} M_{\theta_i}\left(\mathcal{A}_i (u_{i_m})\right)\int_{\Omega  }
			\frac{1}{{\gamma_i}}\bigg(a_{1_i}(|\nabla u_{i_m}| ^{p_i(x)}) |\nabla u_{i_m}| ^{p_i(x)}
			+ a_{2_i}(| u_{i_m}| ^{p_i(x)}) | u_{i_m}| ^{p_i(x)} \bigg)\,dx \\
			& \quad+ \sum_{i=1}^{n}\int_{\Omega  } \frac{1}{\gamma_i}| u_{i_m}|^{s_i(x)}\,dx
			+ \sum_{i=1}^{n}\int_{\partial \Omega  } \dfrac{1}{\gamma_i}| u_{i_m}|^{\ell_i(x)}\,d\sigma_x
			+ \sum_{i=1}^{n} \int_{\Omega  }\frac{\lambda}{\gamma_i}F_{u_i}(x,u_{m})u_{i_m}\,dx + o_m(1)\\	
			&\geq \sum_{i=1}^{n}\frac{\theta_i \mathfrak{M}_i^0}{p_i^+ \max\{\beta_{1_i},\beta_{2_i}\}} \int_{\Omega  } \bigg(a_{1_i}(|\nabla u_{i_m}| ^{p_i(x)})|\nabla u_{i_m}| ^{p_i(x)}
			+a_{2_i}(| u_{i_m}| ^{p_i(x)})| u_{i_m}| ^{p_i(x)} \bigg)\,dx
			- \sum_{i=1}^{n}\frac{1}{s_i^-} \int_{\Omega  }| u_{i_m}| ^{s_i(x)}\,dx\\
			&\quad	- \sum_{i=1}^{n}\frac{1}{\ell_i^-} \int_{\partial \Omega  }| u_{i_m}| ^{\ell_i(x)}\,d\sigma_x
		- \int_{\Omega}\lambda F(x,u_{m})dx
			- \sum_{i=1}^{n}\frac{\mathfrak{M}_i^0}{\gamma_i} \int_{\Omega  } \bigg(a_{1_i}(|\nabla u_{i_m}| ^{p_i(x)})|\nabla u_{i_m}| ^{p_i(x)}
			+a_{2_i}(| u_{i_m}| ^{p_i(x)})| u_{i_m}| ^{p_i(x)}\bigg)	\,dx
			\\
			&\quad	+\sum_{i=1}^{n}\frac{ 1}{\gamma_i} \int_{\Omega  }|u_{i_m}| ^{s_i(x)}\,dx
			+\sum_{i=1}^{n}\frac{ 1}{\gamma_i} \int_{\partial \Omega  }|u_{i_m}| ^{\ell_i(x)}\,dx
			+ \sum_{i=1}^{n} \int_{\Omega  }\frac{\lambda}{\gamma_i}F_{u_i}(x,u_{m})u_{i_m}\,dx +o_m(1)\\	
			&\geq \sum_{i=1}^{n} \mathfrak{M}_i^0\Big(\frac{\theta_i }{p_i^+\max\{\beta_{1_i},\beta_{2_i}\}}-\frac{ 1}{\gamma_i}\Big)\displaystyle\int_{\Omega  }\bigg(a_{1_i}(|\nabla u_{i_m}| ^{p_i(x)})|\nabla u_{i_m}| ^{p_i(x)}
			+a_{2_i}(| u_{i_m}| ^{p_i(x)})| u_{i_m}| ^{p_i(x)}\bigg)	\,dx\\
			&\quad	+
			\sum_{i=1}^{n} \Big(\frac{ 1}{\gamma_i}-\frac{ 1}{s_i^-}\Big)\displaystyle\int_{\Omega  }|u_{i_m}|^{s_i(x)}dx
			+	\sum_{i=1}^{n} \Big(\frac{ 1}{\gamma_i}-\frac{ 1}{\ell_i^-}\Big)\displaystyle\int_{\partial\Omega  }|u_{i_m}|^{\ell_i(x)}dx
			+ \lambda \displaystyle\int_{\Omega}
			\left[ \sum_{i=1}^{n}  \frac{u_{i_m}}{\gamma_i}F_{ u_i}(x,u_{m})	-F(x,u_{m})
			\right] dx +o_m(1).
		\end{align*}		
		Hence, we have 		
		$ c_{\theta,\lambda} \geq \sum_{i=1}^{n} \Big(\frac{ 1}{\gamma_i}-\frac{ 1}{\ell_i^-}\Big)\displaystyle\int_{\partial \Omega  }|u_{i_m}|^{\ell_i(x)}d\sigma_x + o_m(1).$
		Setting $\mathcal{C}^1_{i\kappa} =\cup_{x\in \mathcal{C}^1_{h_{i}}}(\textbf{B}_{\kappa}(x)\cap \Omega)= \{ x\in \Omega : \text{dist}(x, \mathcal{C}^1_{h_{i}})<\kappa \}$, as $m\to +\infty,$ we obtain	
		\begin{align*}
		c_{\theta,\lambda} &
			\geq \sum_{i=1}^{n} \Big(\frac{ 1}{\gamma_i}-\frac{ 1}{{\ell_i}_{\mathcal{C}^1_{i\kappa}}^-}\Big)\Big(\int_{\Omega  }|u_i|^{\ell_i(x)}dx +\sum_{j\in J_i^1}\nu_{ij}\delta_{x_j}\Big)\\
			& \geq \sum_{i=1}^{n} \Big(\frac{ 1}{\gamma_i}-\frac{ 1}{{\ell_i}_{\mathcal{C}^1_{i\kappa}}^-}\Big)\Big(\int_{\Omega  }|u_i|^{s_i(x)}dx +\inf_{j\in J_i^1}\Big\{\overline{T_{i}}_{x_j}^N\Big(D_i\Big)^{N/h_i(x_j)}\Big\} 
			\operatorname{Card}
			 J_i^1\Big),
		\end{align*}
  where $D_i= \mathfrak{M}_i^0\big(\min\{k_{1_i}^0, k_{2_i}^0\}(1-\mathcal{K}(k_i^3)+\mathcal{K}(k_i^3)\min\{k_{1_i}^2,k_{2_i}^2\})\big)$.	 Since $\kappa > 0$ is arbitrary and $\ell_i$ are continuous functions for all $i\in\{1,2,\dots,n\}$, we get
		\begin{align*}
			c_{\theta, \lambda} &\geq \sum_{i=1}^{n} \Big(\frac{ 1}{\gamma_i}-\frac{ 1}{{\ell_i}_{\mathcal{C}^1_{h_{i}}}^-}\Big)\Big(\int_{\Omega  }|u_i|^{s_i(x)}dx +\inf_{j\in J_i^1}\Big\{\overline{T_{i}}_{x_j}^N\Big(D_i\Big)^{N/h_i(x_j)}\Big\} 
			\operatorname{Card}
			 J_i^1\Big).
		\end{align*}		
		Suppose that $\cup_{i=1}^{n}J_i^1\neq \emptyset$, then
		$$c_{\theta, \lambda}  \geq  \min_{1\leq i\leq n} \left\lbrace \Big(\frac{1}{\gamma_i}-\frac{1}{{\ell_i}_{\mathcal{C}^1_{h_{i}}}^-}\Big)\inf_{j\in J_i^1}\Big\{\overline{T_{i}}_{x_j}^N\Big(D_i\Big)^{N/h_i(x_j)}\Big\} \right\rbrace.$$		
		 Therefore, if $\displaystyle c_{\theta, \lambda} < \min_{1\leq i\leq n} \left\lbrace \Big(\frac{1}{\gamma_i}-\frac{1}{{\ell_i}_{\mathcal{C}^1_{h_{i}}}^-}\Big)\inf_{j\in J_i^1}\Big\{\overline{T_{i}}_{x_j}^N\Big(D_i\Big)^{N/h_i(x_j)}\Big\} \right\rbrace$, the set $\cup_{i=1}^{n}J_i^1$ is empty, which means that for all $1\leq i\leq n,$ 
		$ \|u_{i_m}\|_{L^{\ell_i(x)}(\partial\Omega)} \to \|u_i\|_{L^{\ell_i(x)}(\partial \Omega)}$. Since  $u_m\rightharpoonup u$
		in $X$, we have for all $i \in \left\lbrace 1,2,\dots,n\right\rbrace $ that $u_{i_m} \to u_i$ strongly in $L^{\ell_i(x)}(\partial \Omega)$ .		
		Next, consider $J_i^2\neq \emptyset$, by the same approach for the case $J_i^1$. We have		
		$$c_{\theta, \lambda}  \geq  \min_{1\leq i\leq n} \left\lbrace \Big(\frac{1}{\gamma_i}-\frac{1}{{s_i}_{\mathcal{C}^2_{h_{i}}}^-}\Big)\inf_{j\in J_i^2}\Big\{S_{i}^N\Big(D_i\Big)^{N/h_i(x_j)}\Big\} \right\rbrace,$$
	where $D_i= \mathfrak{M}_i^0\big(\min\{k_{1_i}^0, k_{2_i}^0\}(1-\mathcal{K}(k_i^3)+\mathcal{K}(k_i^3)\min\{k_{1_i}^2,k_{2_i}^2\})\big)$. Hence, we deduce that $\cup_{i=1}^nJ_i^2= \emptyset$, which means that for all $1\leq i\leq n,$
		$ \|u_{i_m}\|_{L^{s_i(x)}(\Omega)} \to \|u_i\|_{L^{s_i(x)}(\Omega)}$ . Since $u_m\rightharpoonup u$
		in $X$, we have $u_{i_m} \to u_i$ strongly in $L^{s_i(x)}(\Omega),$ for all $i \in \left\lbrace 1,2,\dots,n\right\rbrace $.
		On the other hand, we have 
			\begin{align*}
			 E'_{\theta,\lambda}& (u_{1_m},\dots,u_{n_m})-\left\langle E'_{\theta,\lambda} (u_{1_k},\dots,u_{n_k}),(u_{1_m}-u_{1_k},0,\dots,0)	\right\rangle \\
			&=\left\langle \Phi_{\theta}' (u_{1_m},\dots,u_{n_m})-\Phi_{\theta}' (u_{1_k},\dots,u_{n_k}),(u_{1_m}-u_{1_k},0,\dots,0)	\right\rangle\\
		&\quad	-\left\langle \Psi  ' (u_{1_m},\dots,u_{n_m})-\Psi' (u_{1_k},\dots,u_{n_k}),(u_{1_m}-u_{1_k},0,\dots,0)\right\rangle\\
			&\quad-\left\langle	\Upsilon' (u_{1_m},\dots,u_{n_m})-\Upsilon' (u_{1_k},\dots,u_{n_k}),(u_{1_m}-u_{1_k},0,\dots,0)\right\rangle\\
		&\quad	-\left\langle \mathcal{F}_{\lambda} ' (u_{1_m},\dots,u_{n_m})-\mathcal{F}_{\lambda}' (u_{1_k},\dots,u_{n_k}),(u_{1_m}-u_{1_k},0,\dots,0)	\right\rangle,
		\end{align*}
		thus $E_{\theta,\lambda}'(u_{1_m},\dots,u_{n_m})\to 0$, i.e $E_{\theta,\lambda} '(u_{1_m},\dots,u_{n_m})$  is a Cauchy sequence in $X^\ast$.
		Furthermore,  by using H\"{o}lder's inequality again, we find		
			\begin{align*}
			\left\langle \Psi ' (u_{1_m},\dots,u_{n_m})-\Psi' (u_{1_k},\dots,u_{n_k}),(u_{1_m}-u_{1_k},0,\dots,0)	\right\rangle &
			=\int_{\Omega }\Big(|u_{1_m}|^{s_1(x)-2}u_{1_m}- |u_{1_k}|^{s_1(x)-2}u_{1_k}\Big)(u_{1_m}-u_{1_k})dx\\
		&	\leq \bigg\| |u_{1_m}|^{s_1(x)-2}u_{1_m}- |u_{1_k}|^{s_1(x)-2}u_{1_k} \bigg\|_{L^{s_1'(x)}(\Omega)} \bigg\| u_{1_m}-u_{1_k}\bigg\|_{L^{s_1(x)}(\Omega)}.
		\end{align*}
		Similarly, we also have 
		\begin{align*}
			\left\langle \Upsilon ' (u_{1_m},\dots,u_{n_m})-\Upsilon' (u_{1_k},\dots,u_{n_k}),(u_{1_m}-u_{1_k},0,\dots,0)	\right\rangle
			&=\int_{\partial\Omega }\Big(|u_{1_m}|^{\ell_1(x)-2}u_{1_m}- |u_{1_k}|^{\ell_1(x)-2}u_{1_k}\Big)(u_{1_m}-u_{1_k})d\sigma_x\\
			&\leq \bigg\|  |u_{1_m}|^{\ell_1(x)-2}u_{1_m}- |u_{1_k}|^{\ell_1(x)-2}u_{1_k} \bigg\|_{L^{\ell_1'(x)}(\partial\Omega)} \bigg\| u_{1_m}-u_{1_k}\bigg\|_{L^{\ell_1(x)}(\partial \Omega)}.
		\end{align*}		
		Since  $\left\lbrace u_{1_m}\right\rbrace $ is  a Cauchy sequence in $L^{s_1(x)}(\Omega)$ and  $L^{\ell_1(x)}(\partial \Omega)$,  it follows that $ \Psi'(u_{1_m},\dots,u_{n_m})$ and $\Upsilon '(u_{1_m},\dots,u_{n_m})$ are Cauchy sequences in $X^\star$.	
	Moreover, by  compactness of $\mathcal{F}_{\lambda} '$, we have 
		$ (u_{1_m},\dots,u_{n_m})\rightharpoonup (u_1,\dots,u_n)  \Rightarrow
		\mathcal{F}_{\lambda}  '(u_{1_m},\dots,u_{n_m}) \rightarrow \mathcal{F}_{\lambda} ' (u_1,\dots,u_n),$
		which means that $\mathcal{F}_{\lambda} '(u_{1_m},\dots,u_{n_m})$ is a Cauchy sequence also in $X^\ast$.	
	Therefore, invoking some elementary inequalities (see, e.g., Hurtado et al. \cite[Auxiliary Results]{Hurtado}), we conclude that for all $ \varrho , \zeta \in \mathbb{R}^N, $
		\begin{eqnarray}
			\label{ineq1}
			\begin{cases}
				|\varrho  -\zeta|^{p_i(x)}\leq c_{p_i} \Big(\mathcal{B}_{j_i}(\varrho)  -\mathcal{B}_{j_i}(\zeta) \Big) \cdot\left(\varrho -\zeta\right)  &\quad\text{if } p_i(x) \geq2\\
				
				|\varrho -\zeta|^{2}\leq c (|\varrho| +|\zeta|)^{2-p_i(x)}\Big(\mathcal{B}_{j_i}(\varrho)  -\mathcal{B}_{j_i}(\zeta) \Big)\cdot\left(\varrho -\zeta\right)  	&\quad\text{if } 1<p_i(x) <2 \\
			\end{cases}
		\end{eqnarray}
		where $\cdot$ denotes the standard inner product  in $\mathbb{R}^N$. Define the subsets of $\Omega$ dependent on $p_i$ by
		$
		U_{p_i}:= \big\{ x\in \Omega : p(x)\geq 2\big\}
		$
		 and
		 $
		V_{p_i}:= \big\{ x\in \Omega : 1<p(x)< 2\big\}.
		$
		 For $i=1,$ replacing $\varrho$ and $\zeta$ by $\nabla u_{1_m}$ and $\nabla u_{1_k},$ respectively when $j=1$ and by $ u_{1_m}$ and $ u_{1_k},$ respectively, when $j=2$ in the first line of relation \eqref{ineq1}, and integrating over $\Omega$, we obtain		
	\[	
	 c_1\int_{U_{p_i} }\big(|\nabla u_{1_m} - \nabla u_{1_k}|^{p_1(x)}+|u_{1_m} -u_{1_k}|^{p_1(x)}\big)dx \leq \left\langle  \Phi_{\theta}'(u_{1_m},\dots,u_{n_m})-\Phi_{\theta}'(u_{1_k},\dots,u_{n_k})
		,(u_{1_m}-u_{1_k},0,\dots,0)\right\rangle.
		\]		
	On the other hand, by the second line of relation \eqref{ineq1}, we have 
		\begin{multline*}
			c_2\int_{V_{p_i} }\bigg(\sigma_1(x)^{p_1(x)-2}| \nabla u_{1_m} - \nabla u_{1_k}|^{2} +
			\sigma_2(x)^{p_1(x)-2}|u_{1_m} -u_{1_k}|^{2}
			\bigg)dx\\
			\leq  \left\langle \Phi_{\theta}'(u_{1_m},\dots,u_{n_m}) -\Phi_{\theta}'(u_{1_k},\dots,u_{n_k}), (u_{1_m} -u_{1_k},0,\dots,0)\right\rangle,
		\end{multline*}	
	where $\sigma_1(x) =(|\nabla u_{1_m}|+|\nabla u_{1_k}|)$ and $\sigma_2(x) =(|u_{1_m}|+| u_{1_k}|)$. Hence, by H\"{o}lder's inequality and Lemma \ref{prop22}, 		
		\begin{align*}
		\int_{V_{p_i} }\bigg(&|\nabla u_{1_m} - \nabla u_{1_k}|^{p_1(x)}+|u_{1_m} -u_{1_k}|^{p_1(x)}\bigg)dx\\
			&  = \int_{V_{p_i}}\sigma_1^{\frac{p_1(x)(p_1(x)-2)}{2}}\Big(\sigma_1^{\frac{p_1(x)(p_1(x)-2)}{2}}|\nabla u_{1_m} -\nabla u_{1_k}|^{p_1(x)} \Big)dx		+	\int_{V_{p_i}}\sigma_1^{\frac{p_1(x)(p_1(x)-2)}{2}}\Big(\sigma_1^{\frac{p_1(x)(p_1(x)-2)}{2}}|u_{1_m} -u_{1_k}|^{p_1(x)} \Big)dx	\\
		&	\leq C_3 \bigg\| \sigma_1^{\frac{p_1(x)(2-p_1(x))}{2}} \bigg\|_{L^{\frac{2}{2-p_1(x)}}(V_{p_i})}
			\bigg\| \sigma_1^{\frac{p_1(x)(p_1(x)-2)}{2}}|\nabla u_{1_m} - \nabla u_{1_k}|^{p_1(x)} \bigg\|_{L^{\frac{2}{p_1(x)}}(V_{p_i})}\\
		& \qquad	+ C_4 \bigg\| \sigma_2^{\frac{p_1(x)(2-p_1(x))}{2}} \bigg\|_{L^{\frac{2}{2-p_1(x)}}(V_{p_i})}
			\bigg\|\sigma_2^{\frac{p_1(x)(p_1(x)-2)}{2}}|u_{1_m} -u_{1_k}|^{p_1(x)} \bigg\|_{L^{\frac{2}{p_1(x)}}(V_{p_i})}\\
			&\leq C_5\max\Biggl\{ \bigg\| \sigma_1\bigg\|^{\big(\frac{p_1(x)(p_1(x)-2)}{2}\big)^-}_{L^{p_1(x)}(V_{p_i})}, \bigg\| \sigma_1\bigg\|^{\big(\frac{p_1(x)(p_1(x)-2)}{2}\big)^+}_{L^{p_1(x)}(V_{p_i})}
			\Biggr\} \\
			& \quad \quad {} \times \max\Biggl\{ \Big(	\int_{V_{p_i}}\sigma_1^{p_1(x)-2}|\nabla u_{1_m} - \nabla u_{1_k}|^{2}dx\Big)^{\frac{p_1^-}{2}}, \Big(	\int_{V_{p_i}}\sigma_1^{p_1(x)-2}| \nabla u_{1_m} -\nabla u_{1_k}|^{2}dx\Big)^{\frac{p_1^+}{2}}
			\Biggr\}\\
		& \qquad	+ C_6\max\Biggl\{ \Vert \sigma_2\Vert^{[\frac{p_1(x)(p_1(x)-2)}{2}]^-}_{L^{p_1(x)}(V_{p_i})}, \Vert \sigma_2\Vert^{[\frac{p_1(x)(p_1(x)-2)}{2}]^+}_{L^{p_1(x)}(V_{p_i})}
			\Biggr\}\\  
			&  \quad \quad {} \times  \max\Biggl\{ \Big(	\int_{V_{p_i}}\sigma_1^{p_1(x)-2}|u_{1_m} -u_{1_k}|^{2}dx\Big)^{\frac{p_1^-}{2}}, \Big(	\int_{V_{p_i}}\sigma_1^{p_1(x)-2}|u_{1_m} -u_{1_k}|^{2}dx\Big)^{\frac{p_1^+}{2}}
			\Biggr\}.
		\end{align*}			
		Since $\left\lbrace u_{1_m}\right\rbrace $  is bounded sequence  in
		$W^{1,h_1(x)}(\Omega)\cap W^{1,p_1(x)}, $ we have 
		$$\left\langle \Phi_{\theta}'(u_{1_m},\dots,u_{n_m}) -\Phi_{\theta}'(u_{1_k},\dots,u_{n_k}), (u_{1_m} -u_{1_k},0,\dots,0)\right\rangle \to 0, \ \text{as } ~m,k \to  +\infty,$$
	hence
	$\left\lbrace u_{1_m}\right\rbrace $  is a Cauchy sequence in $W^{1,p_1(x)}\cap W^{1,h_1(x)}(\Omega)$. 
	We argue similarly for $\left\lbrace u_{i_m}\right\rbrace, $  
		$$\left\langle \Phi_{\theta}'(u_{1_m},\dots,u_{i_m},\dots,u_{n_m}) -\Phi_{\theta}'(u_{1_k},\dots,u_{i_k},\dots,u_{n_k}), (0,\dots,u_{i_m}-u_{i_k},0,\dots,0)\right\rangle,
		\		
		\hbox{for all}
		\
		i\in \{2,\dots,n\}.$$  Thus, we can conclude that $u_m= (u_{1_m},\dots,u_{n_m})\to u= (u_{1},\dots,u_{n})$ strongly in $X$ as $m\to +\infty$. Therefore, we have that
		$E_{\theta, \lambda}(u)= c_{\theta, \lambda}>0$ and 	$E_{\theta, \lambda}'(u)=0$	 in $X'$, i.e., $u\in X$ is weak solution of  problem \eqref{s.aux1.1}.  Since $E_{\theta,\,\lambda}(u)=c_{\theta,\,\lambda}>0=E_{\theta,\,\lambda}(0),$ we can conclude that $u\not\equiv 0$.
	\end{proof}	
\section{\sc Proof of the main theorem}\label{proof res}
	Now we are in position to prove Theorem \ref{thm.result}.	
	\begin{proof}  	
	Invoking Theorem \ref{AuxTh}, for all $\lambda\geq\lambda_*$ let $u_\lambda=(u_{1,\lambda},u_{2,\lambda},\dots,u_{n,\lambda})$ be a solution of system \eqref{s.aux1.1}. We shall prove that
	\begin{equation}\label{claim2}
		\mbox{there exists }\,\, \lambda^*\geq\lambda_*\,\, \mbox{such that}\,\, \mathcal{A}_i(u_{i,\lambda})\leq \tau_i^0, \,\,\mbox{for all}\,\,\lambda\geq\lambda^* ,
	\end{equation}
	where $\tau_i^0$ is defined as at the beginning of Section \ref{sec auxiliary}.	 We argue by contradiction and suppose that there is a sequence $\left\{\lambda_m\right\}_{m\in\mathbb{N}}\subset\mathbb{R}$ such that $\mathcal{A}_i(u_{i,\lambda_m})\geq \tau_i^0,$ for all $i \in \{1,2,\dots,n\}$. 		
By assumption $(\textbf{\textit{A}}_1)$ and the fact  $\mathcal{A}_i(u_{i,\lambda_m})\geq \tau_i^0$, we get
			\begin{equation} \label{keyA_1}
	\displaystyle \int_{\Omega }\Big(\max\{k_{1_i}^1,k_{2_i}^1\} \big(|\nabla u_{i,\lambda_m}|^{p_i(x)} + | u_{i,\lambda_m}|^{p_i(x)} \big)
	+
	k_i^3\big(|\nabla u_{i,\lambda_m}|^{q_i(x)}
	+ |u_{i,\lambda_m}|^{q_i(x)}\big)\Big)dx 
	\geq \tau_i^0 \text{ for all } i=1,2,\dots,n.
	\end{equation}		
 Since $u_{\lambda_m}=(u_{1,\lambda_m},\dots,u_{n,\lambda_m})$ is a critical point of the functional $E_{\theta,\,\lambda_m}$, we can conclude, using assumptions $(\textbf{\textit{M}})$ and $(\textbf{\textit{F}}_3)$, that				
		\begin{equation}\label{Inq43}
			\begin{alignedat}2
				c_{\theta,\lambda_m }&=E_{\theta,\lambda }(u_{\lambda_m})-	\big\langle E_{\theta,\lambda }'(u_{\lambda_m}),\frac{u_{\lambda_m}}{\gamma }\big\rangle\\
			&\geq\sum_{i=1}^{n}\widehat{M_{\theta_i}}\left(\mathcal{A}_i (u_{i,\lambda_m})\right)-
			\sum_{i=1}^{n} M_i\left(\mathcal{A}_i (u_{i,\lambda_m})\right)\int_{\Omega  }\frac{1}{\gamma_i}\bigg(a_{1_i}(|\nabla u_{i,\lambda_m}| ^{p_i(x)})|\nabla u_{i,\lambda_m}| ^{p_i(x)} + a_{2_i}(| u_{i,\lambda_m}| ^{p_i(x)})|u_{i,\lambda_m}|^{p_i(x)} \bigg)  \,dx \\
			&\geq \sum_{i=1}^{n}\frac{\mathfrak{M}_i^0}{p_i^+\max\{\beta_{1_i}, \beta_{2_i}\}} \int_{\Omega  }\bigg(a_{1_i}(|\nabla u_{i,\lambda_m}| ^{p_i(x)})|\nabla u_{i,\lambda_m}| ^{p_i(x)} + a_{2_i}(| u_{i,\lambda_m}| ^{p_i(x)})|u_{i,\lambda_m}| ^{p_i(x)} \bigg)  \,dx\\
			&\quad- \sum_{i=1}^{n}\frac{\theta_i}{\gamma_i} \int_{\Omega  }\bigg(a_{1_i}(|\nabla u_{i,\lambda_m}| ^{p_i(x)})|\nabla u_{i,\lambda_m}| ^{p_i(x)} + a_{2_i}(| u_{i,\lambda_m}| ^{p_i(x)})|u_{i,\lambda_m}| ^{p_i(x)} \bigg)  \,dx\\	
			&\geq \sum_{i=1}^{n} \Big(\frac{ \mathfrak{M}_i^0}{p_i^+\max\{\beta_{1_i}, \beta_{2_i}\}}-\frac{ \theta_i}{\gamma_i}\Big)\displaystyle\int_{\Omega  }\bigg(a_{1_i}(|\nabla u_{i,\lambda_m}| ^{p_i(x)})|\nabla u_{i,\lambda_m}| ^{p_i(x)} + a_{2_i}(| u_{i,\lambda_m}| ^{p_i(x)})|u_{i,\lambda_m}| ^{p_i(x)} \bigg) \,dx\\
			&\geq \sum_{i=1}^{n} \Big(\frac{ \mathfrak{M}_i^0}{p_i^+\max\{\beta_{1_i}, \beta_{2_i}\}}-\frac{ \theta_i}{\gamma_i}\Big) \Bigg[\displaystyle \int_{\Omega}\max\{k_{1_i}^0,k_{2_i}^0\} \big(|\nabla u_{i,\lambda_m}| ^{p_i(x)} + |u_{i,\lambda_m}| ^{p_i(x)}\big)\,dx \\
		&\quad	+ \mathcal{K}(k_i^3)\max\{k_{1_i}^2,k_{2_i}^2\}
			\int_{\Omega}\big(| \nabla u_{i,\lambda_m} | ^{q_i(x)}+ | u_{i,\lambda_m}| ^{q_i(x)}\big)\,dx\Bigg].
			\end{alignedat}
		\end{equation}
If $k_i^3=0$, then using relation \eqref{keyA_1}, we find	
	$	\displaystyle \int_{\Omega} \big(|\nabla u_i| ^{p_i(x)} + |u_i| ^{p_i(x)}\big)\,dx	\geq \frac{\tau_i^0}{\max\{k_{1_i}^1,k_{2_i}^1\}},$	
	thus we have 
		\begin{align*}
		c_{\theta,\lambda }&\geq \sum_{i=1}^{n} \Big(\frac{ \mathfrak{M}_i^0}{p_i^+\max\{\beta_{1_i}, \beta_{2_i}\}}-\frac{ \theta_i}{\gamma_i}\Big) \Big(\frac{\max\{k_{1_i}^0,k_{2_i}^0\}}{\max\{k_{1_i}^1,k_{2_i}^1\}}\Big)\tau_i^0 >0.
\end{align*}			
This contradicts Lemma \ref{lemma34}, because $\displaystyle\lim_{m\to +\infty }c_{\theta,\lambda_m}=0$.
On the other hand, if $k_i^3>0$, we multiplying relation \eqref{Inq43} by  $\displaystyle\max_{1\leq i\leq n}\{\max\{k_{1_i}^1,k_{2_i}^1\}\times k_i^3\}>0,$ and by using also relation \eqref{keyA_1}, we  get	
		\begin{align*}
	\displaystyle\max_{1\leq i\leq n}\{\max\{k_{1_i}^1,k_{2_i}^1\}\times k_i^3\}	C_{\theta,\lambda }&\geq \sum_{i=1}^{n} \Big(\frac{ \mathfrak{M}_i^0}{p_i^+\max\{\beta_{1_i}, \beta_{2_i}\}}-\frac{ \theta_i}{\gamma_i}\Big)\kappa_i \displaystyle \int_{\Omega }\Bigg(\max\{k_{1_i}^1,k_{2_i}^1\} \big(|\nabla u_{i,\lambda_m}|^{p_i(x)} + | u_{i,\lambda_m}|^{p_i(x)} \big)\\
	&\qquad+
	k_i^3\big(|\nabla u_{i,\lambda_m}|^{q_i(x)}
	+ |u_{i,\lambda_m}|^{q_i(x)}\big)\Bigg)dx\\
	&\geq  \sum_{i=1}^{n} \Big(\frac{ \mathfrak{M}_i^0}{p_i^+\max\{\beta_{1_i}, \beta_{2_i}\}}-\frac{ \theta_i}{\gamma_i}\Big)\kappa_i \tau_i^0,
	\end{align*}	
where $\kappa_i=\min \bigg\{\max\{k_{1_i}^0,k_{2_i}^0\}\times k_i^3,\max\{k_{1_i}^1,k_{2_i}^1\}\times\max\{k_{1_i}^2,k_{2_i}^2\}\bigg\}$. This also contradicts Lemma \ref{lemma34} because $\displaystyle\lim_{m\to +\infty }c_{\theta,\lambda}=0$. Hence, we can conclude in both cases, that there exists $\lambda^{\ast}\geq \lambda_\ast$ such that $ \mathcal{A}_i(u_{i,\lambda})\geq \tau_i^0,$ for all $\lambda \geq \lambda^{\ast}$. So, we
can
 find $M_{\theta_i}(\mathcal{A}_i(u_\lambda))=M_{\theta_i}(\mathcal{A}_i(u_\lambda)),$ for all $\lambda \geq \lambda^{\ast}$,  which implies that 
	$E_{\theta,\lambda}(u_\lambda)=E_{\lambda}(u_\lambda)$ and 
	$E_{\theta,\lambda}'(u_\lambda)=E_{\lambda}'(u_\lambda)$, that is $u_\lambda$ is a nontrivial weak solution of the problem \eqref{s1.1}, for each $\lambda\geq \lambda^{\ast}$.	\\	
	It now remains to consider the asymptotic behavior of solutions to problem
	\eqref{s1.1}. By assumptions $(\textbf{\textit{A}}_2)$, $(\textbf{\textit{A}}_4)$, $(\textbf{\textit{M}})$, $(\textbf{\textit{F}}_3)$, and inequalities \eqref{L22}-\eqref{L23} and \eqref{hypM}, arguing as above, we obtain
	\begin{align*}
c_\lambda &=	c_{\theta,\lambda }\\
&\geq \sum_{i=1}^{n} \Big(\frac{ \mathfrak{M}_i^0}{p_i^+\max\{\beta_{1_i}, \beta_{2_i}\}}-\frac{ \theta_i}{\gamma_i}\Big) \Bigg(\displaystyle \min \{k_{1_i}^0,k_{2_i}^0\}\int_{\Omega} \big(| \nabla u_{i,\lambda}| ^{p_i(x)} + | u_{i,\lambda}| ^{p_i(x)}\big)dx \\
& \quad + \mathcal{K}(k_i^3)\min \{k_{1_i}^2,k_{2_i}^2\}
\int_{\Omega} \big(| \nabla u_{i,\lambda}| ^{q_i(x)} + | u_{i,\lambda}| ^{q_i(x)}\big)dx\Bigg)\\
&\geq \sum_{i=1}^{n} \Big(\frac{ \mathfrak{M}_i^0}{p_i^+\max\{\beta_{1_i}, \beta_{2_i}\}}-\frac{ \theta_i}{\gamma_i}\Big)\Bigg[
\min \{k_{1_i}^0,k_{2_i}^0\} \min \bigg\{\|  u_{i,\lambda}\|_{1,p_i(x)}^{p_i^-},\|  u_{i,\lambda}\|_{p_i(x)}^{p_i^+}
\bigg\}    \\
&\quad + \mathcal{K}(k_i^3)\min \{k_{1_i}^2,k_{2_i}^2\} \min \bigg\{\|  u_{i,\lambda}\|_{1,q_i(x)}^{q_i^-},\|  u_{i,\lambda}\|_{1,q_i(x)}^{q_i^+}
\bigg\} \Bigg].
\end{align*}				
		Hence, by Lemma \ref{lemma34}, we get	
		$\lim_{\lambda \to +\infty } \|u_{\lambda}\|= \lim_{ \lambda \to +\infty }\max_{1\leq i\leq n} \bigg\{\| u_i\|_{1,p_i(x)}+ \mathcal{K}(k_i^3)\|  u_i\|_{1,q_i(x)}\bigg\}= 0.$
	\end{proof}
\section{Some examples} \label{Examples}
In the last section, we shall exhibit some examples which are interesting from the mathematical point of view and have
a wide range of applications in physics and 
other scientific fields that fall within the general class of systems studied in this paper, under adequate assumptions on functions $a_{i_j}$.
\begin{example} \label{example5.1}
Taking $a_{1_i}\equiv 1$ and $a_{2_i}\equiv 1$, we see that $a_{1_i}$ satisfies the  assumptions $(\textbf{\textit{A}}_1),(\textbf{\textit{A}}_2),$ and $(\textbf{\textit{A}}_3),$ with $k^0_{j_i}=k^1_{j_i}=1,$
 $k^2_{j_i}>0,$ 
 and 
 $k^3_{i}=0,$ for all $i \in \{1,2,\dots,n\}$ and $j=1$ or $2$. 
 Hence,  system \eqref{s1.1} becomes
	\begin{equation}
	\label{exa5.1}
	\begin{gathered}
		-M_i\Big(\mathcal{A}_i(u_i)\Big)\Big(\Delta_{p_i(x)}u_i -| u_i|^{p_i(x)-2} u_i\Big)
		= |u_i|^{s_i (x)-2}u_i +\lambda F_{u_i}(x,u)    \quad \text{ in } \Omega, \\
		  	M_i\Big(\mathcal{A}_i(u_i)\Big)|\nabla u_i|^{p(x)-2}\nabla u_i.\mathfrak{N}_i=|u_i|^{\ell_i (x)-2}u_i  \qquad\text{ on } \partial \Omega,
	\end{gathered}
\end{equation}
for $1\leq i\leq n (n\in \mathbb{N}^\ast)$, where 
$$\mathcal{A}_i(u_i)=\displaystyle\int_{\Omega }\dfrac{1}{p_i(x)}\big(|\nabla u_i|^{p_i(x)}+ |u_i|^{p_i(x)} \big)dx.$$
 The operator $ \Delta_{p_i(x)}u_i:=\textrm{div}\,(|\nabla u_i|^{p_i(x)-2}\nabla u_i)$ is so-called $p_i(x)$-Laplacian, which coincides with the usual $p_i$-Laplacian when $p_i(x)=p_i$, and with the Laplacian when $p_i(x)=2$.
\end{example} 
\begin{example} \label{example5.2}
Taking $a_{j_i}(\xi)= 1+ \xi^{\frac{q_i(x)-p_i(x)}{p_i(x)}}$,
we see that $a_{j_i}$ satisfies the assumptions $(\textbf{\textit{A}}_1),(\textbf{\textit{A}}_2),$ and $(\textbf{\textit{A}}_3),$ with $k^0_{j_i}=k^1_{j_i}=k^2_{j_i}=k^3_{i}=1,$
for all $i\in \{1,2,\dots,n\}$ and $j=1$ or $2$. Hence, system \eqref{s1.1} becomes  the
following $p\&q$-Laplacian system
\begin{equation}
	\label{exa5.2}
	\begin{gathered}
		-M_i\Big(\mathcal{A}_i(u_i)\Big)\Big(\Delta_{p_i(x)}u_i +\Delta_{q_i(x)}u_i - (| u_i|^{p_i(x)-2} u_i +| u_i|^{q_i(x)-2} u_i) \Big)
		= |u_i|^{s_i (x)-2}u_i +\lambda F_{u_i}(x,u)    \quad \text{ in } \Omega, \\
		M_i\Big(\mathcal{A}_i(u_i)\Big)\Big(|\nabla u_i|^{p_i(x)-2}\nabla u_i + |\nabla u_i|^{q_i(x)-2}\nabla u_i\Big).\mathfrak{N}_i=|u_i|^{\ell_i (x)-2}u_i  \qquad\text{ on } \partial \Omega,
	\end{gathered}
\end{equation}
for $1\leq i\leq n (n\in \mathbb{N}^\ast)$, where $$\mathcal{A}_i(u_i)=\displaystyle \int_{\Omega }\Big(\dfrac{1}{p_i(x)}\left(|\nabla u_i|^{p_i(x)}+ w_i(x)|u_i|^{p_i(x)} \right)+\dfrac{1}{q_i(x)}\left(|\nabla u_i|^{q_i(x)}+ |u_i|^{q_i(x)} \right)\Big)dx.$$
As explained in Cherfils and Il'yasov  \cite{Cherfils}, the study of system \eqref{exa5.2} was motivated by the following 
more general reaction-diffusion system
\begin{equation*} \label{DCE}
	u_t= \textrm{div}[H(u)\nabla u ]+ d(x,u),
	\
	\hbox{where }
	\
	H(u)=|\nabla u|^{p(x)-2}+|\nabla u|^{q(x)-2},
\end{equation*}
which has applications in  biophysics (see, e.g., Fife  \cite{Fife}, Murray  \cite{Myers}), plasma physics (see, e.g., Wilhelmsson  \cite{Wilhelmsson}), and chemical reactions design (see, e.g., Aris \cite{Aris}). In these applications, $u$ represents a concentration, $\textrm{div}[H(u)\nabla u ]$ is the diffusion with diffusion coefficient, and the reaction term $d(x,u)$ relates to source and loss processes. For further details we refer the interested reader to e.g., 
Mahshid and Razani  \cite{Mahshid},  He and Li  \cite{He}, and the references therein.\\
We continue with other examples which are also interesting from the mathematical point of view.
\end{example}
\begin{example}\label{example5.3} 
Taking $a_{j_i}(\xi)= 1+ \frac{\xi}{\sqrt{1+\xi^2}}$ and $a_{2_i}\equiv 1$,
we see that $a_{j_i}$ satisfies the assumptions $(\textbf{\textit{A}}_1),(\textbf{\textit{A}}_2),$ and $(\textbf{\textit{A}}_3),$ with 
$k^0_{1_i}=k^0_{2_i}=k^1_{2_i}=1$,
$k^1_{1_i}=2$, and $k^3_{i}=0$, 
$k^2_{1_i}>0,$
 and 
 $k^2_{2_i}>0,$ 
 for all 
 $i\in \{1,2,\dots,n\}$.
 Hence, system \eqref{s1.1} becomes
\begin{equation}
	\label{exa5.3}
	\begin{gathered}
		-M_i\Big(\mathcal{A}_i(u_i)\Big)\Bigg(\textrm{div}\,\Big(\Big(1+ \frac{|\nabla u_i|^{p_i(x)}}{\sqrt{1+|\nabla u_i|^{2p_i(x)}}}\Big)|\nabla u_i|^{p_i(x)-2}\nabla u_i\Big) -w_i(x)| u_i|^{p_i(x)-2} u_i \Bigg)
		= |u_i|^{s_i (x)-2}u_i +\lambda F_{u_i}(x,u)    \text{ in } \Omega, \\
		M_i\Big(\mathcal{A}_i(u_i)\Big)\Big(1+ \frac{|\nabla u_i|^{p_i(x)}}{\sqrt{1+|\nabla u_i|^{2p_i(x)}}}\Big)|\nabla u_i|^{p_i(x)-2}\nabla u_i.\mathfrak{N}_i=|u_i|^{\ell_i (x)-2}u_i  \qquad\text{ on } \partial \Omega,
	\end{gathered}
\end{equation}
for $1\leq i\leq n (n\in \mathbb{N}^\ast)$, where
\[
\mathcal{A}_i(u_i)=\displaystyle\int_{\mathbb{R}^N }\frac{1}{p_i(x)}\Big(|\nabla u_i|^{p_i(x)} + \sqrt{1+|\nabla u_i|^{2p_i(x)}}+ |u_i|^{p_i(x)}\Big)dx.
\]
	The operator  $\textrm{div}\,\Big(\Big(1+ \frac{|\nabla u|^{p(x)}}{\sqrt{1+|\nabla u|^{2p(x)}}}\Big)|\nabla u|^{p(x)-2}\nabla u\Big)$ is said to be
	   $p_i(x)$-Laplacian like or is called a generalized capillary operator. The capillarity can be briefly explained by considering the effects of two opposing forces: adhesion, i.e. the attractive (or repulsive) force between the molecules of the liquid and those of the container; and cohesion, i.e. the attractive force between the molecules of the liquid. The study of capillary phenomenon has gained much attention. This increasing interest is motivated not only by the fascination in naturally	occurring phenomena, such as motion of drops, bubbles and waves, but also by its importance in applied fields, raging from industrial and biomedical and pharmaceutical to microfluidic systems - for further details we refer the interested reader to e.g.,	Ni and Serrin  \cite{Ni}, and the references therein.
\end{example}
\begin{example}\label{xample5.4}
 Taking  $\displaystyle a_{1_i}(\xi)= 1+ \frac{1}{(1+\xi)^{\frac{p_i(x)-2}{p_i(x)}}}$ and $a_{2_i}\equiv 1$, we see that $a_{j_i}$ satisfies the assumptions $(\textbf{\textit{A}}_1),(\textbf{\textit{A}}_2),$ and $(\textbf{\textit{A}}_3),$ with 
 $k^0_{1_i}=k^0_{2_i}=k^1_{2_i}=1$,
 $k^1_{1_i}=2$, 
 $k^3_{i}=0$, 
 $k^2_{1_i}>0,$ 
 and 
 $k^2_{2_i}>0,$ 
 for all $i\in \{1,2,...,n\}$.  
 Hence, system \eqref{s1.1}  becomes
\begin{equation}
	\label{exa5.4}
	\begin{gathered}
		-M_i\Big(\mathcal{A}_i(u_i)\Big)\Bigg(\textrm{div}\,\Bigg(|\nabla u_i|^{p(x)-2}\nabla u_i + \dfrac{|\nabla u_i|^{p(x)-2}\nabla u_i}{(1+ |\nabla u_i|^{p(x)})^{\frac{p(x)-2}{p(x)}}}\Bigg) -| u_i|^{p_i(x)-2} u_i \Bigg)
		= |u_i|^{s_i (x)-2}u_i +\lambda F_{u_i}(x,u)    \quad \text{ in } \Omega, \\
		M_i\Big(\mathcal{A}_i(u_i)\Big)\Big(|\nabla u_i|^{p(x)-2}\nabla u_i + \dfrac{|\nabla u_i|^{p(x)-2}\nabla u_i}{(1+ |\nabla u_i|^{p(x)})^{\frac{p(x)-2}{p(x)}}}\Big).\mathfrak{N}_i=|u_i|^{\ell_i (x)-2}u_i  \qquad\text{ on } \partial \Omega,
	\end{gathered}
\end{equation}
for $1\leq i\leq n (n\in \mathbb{N}^\ast)$, where
\[
\mathcal{A}_i(u_i)=\displaystyle\int_{\mathbb{R}^N }\frac{1}{p_i(x)}\Big(|\nabla u_i|^{p_i(x)} + \sqrt{1+|\nabla u_i|^{2p_i(x)}}+ |u_i|^{p_i(x)}\Big)dx.
\]
\end{example}
\begin{example}\label{example5.5}  
	Taking  $\displaystyle a_{1_i}(\xi)= 1+ \xi^{\frac{q_i(x)-p_i(x)}{p_i(x)}} +\frac{1}{(1+\xi)^{\frac{p_i(x)-2}{p_i(x)}}}$ and $\displaystyle a_{2_i}(\xi)= 1+ \xi^{\frac{q_i(x)-p_i(x)}{p_i(x)}}$, we see that $a_{1_i}$ satisfies the assumptions $(\textbf{\textit{A}}_1),(\textbf{\textit{A}}_2),$  $(\textbf{\textit{A}}_3),$ 
	and
	 $(\textbf{\textit{H}}_3),$ 
	 with $k^0_{1_i}=k^0_{2_i}=k^1_{2_i}=1$, 
	 $k^1_{1_i}= 2,$ 
	and 
	$k^3_{i}=k^2_{1_i}=k^2_{2_i}=1,$ for all $i\in \{1,2,...,n\}$.  
	Hence, system \eqref{s1.1}  becomes
\begin{equation}
	\label{exa5.5}
	\begin{gathered}
		-M_i\Big(\mathcal{A}_i(u_i)\Big)\Bigg(\Delta_{p_i(x)}u_i +\Delta_{q_i(x)}u_i +\textrm{div}\,\Big(\dfrac{|\nabla u_i|^{p_i(x)-2}\nabla u_i}{(1+ |\nabla u_i|^{p_i(x)})^{\frac{p_i(x)-2}{p_i(x)}}}\Big)- \Big(| u_i|^{p_i(x)-2} u_i +| u_i|^{q_i(x)-2} u_i \Big) \Bigg)\\
		= |u_i|^{s_i (x)-2}u_i +\lambda F_{u_i}(x,u)    \text{ in } \Omega, \\
		M_i\Big(\mathcal{A}_i(u_i)\Big)\Big(|\nabla u_i|^{p_i(x)-2}\nabla u_i+ \dfrac{|\nabla u_i|^{p(x)-2}\nabla u_i}{(1+ |\nabla u_i|^{p(x)})^{\frac{p(x)-2}{p(x)}}}  +
		|\nabla u_i|^{q_i(x)-2}\nabla u_i\Big).\mathfrak{N}_i=|u_i|^{\ell_i (x)-2}u_i  \qquad\text{ on } \partial \Omega,
	\end{gathered}
\end{equation}
for $1\leq i\leq n (n\in \mathbb{N}^\ast)$, where
\[
\mathcal{A}_i(u_i)=\displaystyle\int_{\Omega }\Bigg(\dfrac{1}{p_i(x)}\left(|\nabla u_i|^{p_i(x)}+ w_i(x)|u_i|^{p_i(x)} \right)+\dfrac{1}{q_i(x)}\left(|\nabla u_i|^{q_i(x)}+ w_i(x)|u_i|^{q_i(x)} \right) + \frac{1}{2}(1+ |\nabla u_i|^{p_i(x)})^{\frac{2}{p_i(x)}}\Bigg)dx.
\]
\end{example}
On the other hand, the class of systems \eqref{s1.1} can
contain one model of the above divergence operators, as in Examples \ref{example5.1}--\ref{example5.5}, or many different models of divergence operators simultaneously, depending on the phenomenon studied. Moreover, each equation in this class
can also be degenerate or nondegenerate. \\

\subsection*{Acknowledgements}  The second author acknowledges the funding received from the Slovenian Research Agency grants P1-0292, J1-4031, J1-4001, N1-0278, N1-0114, and N1-0083.
The authors  thank the referees for their suggestions and comments.

\end{document}